\documentclass[11pt]{amsart}
\usepackage{amsmath, amssymb, amsthm}
\usepackage{hyperref}
\usepackage{tikz}
\usepackage{subfigure}
\usepackage{ytableau}
\usepackage{array}
\usepackage{amsfonts}
\usepackage{mathtools}
\theoremstyle{plain}
\newtheorem{theorem}{Theorem}[section]
\newtheorem{lemma}[theorem]{Lemma}
\newtheorem{corollary}[theorem]{Corollary}
\newtheorem{proposition}[theorem]{Proposition}

\newtheorem{remark}[]{Remark}
\newtheorem{claim}[]{Claim}
\newtheorem{case}[]{Case}

\renewcommand{\Im}{\operatorname{Im}}

\numberwithin{equation}{section}

\begin{document}
	\title[Asymptotic Formula for $(t+1)$-Regular Partitions]{Asymptotic Formula for $(t+1)$-Regular Partitions} 
	\author{JAYANTA BARMAN}
	\address{JAYANTA BARMAN\\ Department of Mathematics \\
		Indian Institute of Technology Kharagpur \\
		Kharagpur-721302, India.} 
	\email{b1999jayanta@gmail.com}
	
	\author[Kamalakshya Mahatab]{Kamalakshya Mahatab}
	\address{Kamalakshya Mahatab\\ Department of Mathematics \\
		Indian Institute of Technology Kharagpur \\
		Kharagpur-721302, India.} 
	\email{kamalakshya@maths.iitkgp.ac.in}
	
	
	\subjclass[2020]{11P82, 11P55, 11N37}
	\keywords{ Partition Function, t-regular Partitions, Saddle Point Method}
	\begin{abstract}
		A partition is $t$-regular if none of its parts is divisible by $t$. Let $p(N,t)$ be the number of $(t+1)$-regular partitions of a positive integer $N$. In 1971, Hagis proved an asymptotic formula for $p(N,t)$ using the circle method, when $t$ fixed. In this article, we use the saddle point method and extend the result of Hagis in different ranges of $t$, obtaining explicit bounds. We also discuss an application of our result to estimate zeros in the character table of the symmetric group.
	\end{abstract}
	
	\maketitle
	\section{Introduction}\label{section1} 	
	A partition of a positive integer $N$ is a tuple $\lambda = (\lambda_1, \ldots, \lambda_\ell)$ of positive integers satisfying $\lambda_1 \ge \lambda_2 \ge \cdots \ge \lambda_\ell$ and $\sum_{i=1}^{\ell} \lambda_i = N$. Let $p(N)$ be the number of such partitions. Although the definition of a partition is straightforward, estimating its growth requires complex techniques. A breakthrough in this study was achieved in 1918 by Hardy and Ramanujan \cite{hardy1918asymptotic}, who established an asymptotic formula for $p(N)$ using the circle method. This result was later refined in 1938 by Rademacher \cite{rademacher1938partition}, leading to the following asymptotic expression:
	\begin{equation}\label{eq-hardy}
		p(N) = \frac{1}{4N\sqrt{3}} \exp\left( \frac{2\pi}{\sqrt{6}}\sqrt{N}\right) \left(1 + O\left(N^{-\frac{1}{2}}\right)\right).
	\end{equation}
	Similar to unrestricted partitions, many important arithmetic and combinatorial questions also arise from restricted partition functions. One classical example is that of 
	$t$-regular partitions:
	a partition $\lambda$ of $N$ is called $t$-regular if none of its parts is divisible by $t$. Let $p(N,t-1)$ denote the number of $t$-regular partitions of $N$. A classical result of Glaisher \cite{Glaisher1883} states that the number of $(t+1)$-regular partitions of $N$ is equal to the number of partitions of $N$ in which no part appears more than $t$ times. These partitions have been studied extensively and are related to many problems in combinatorics and number theory \cite{bhowmik2025number, mc2012hardy,mcspirit2023zeros}. Several properties of $p(N,t)$, including arithmetic properties,  multiplicative properties, and monotonicity, have been studied in \cite{barman2024arithmetic,beckwith2016multiplicative, keith2014congruences, singh2023proofs, singh2025hook}.

	Although restricted partition functions such as $p(N,t)$ were traditionally studied using combinatorial methods, Hagis \cite{hagis1963partitions, hagis1971partitions} was the first to derive a Rademacher-type \cite{rademacher1938partition} explicit formula for $p(N,t)$ with fixed $t \le N$, using the circle method. 
	In this article, we use the saddle point method to estimate $p(N,t)$, which refines Hagis' bound. Below, we recall the generating series and the integral formula for $p(N,t)$, which will be needed to apply the saddle point method.
	
	Let $q=\exp(2\pi iz)$, where $z=x+iy$, $y>0$. Recall that the Dedekind eta function $\eta(z)$ is defined by
	\begin{equation}\label{eq-eta}
		\eta(z)=\exp\left(\frac{\pi iz}{12}\right)\prod_{n=1}^{\infty}(1-\exp(2\pi inz)).   
	\end{equation}
	Following \cite{tyler2024asymptotics}, we introduce the functions
	\begin{equation}\label{demu(z)}
		\mu_{k}(z)=-\frac{z^{k+1}}{2\pi i} \left(\frac{d}{dz}\right)^{k} \log\eta(z),\quad \text{for $k\ge 1$}.
	\end{equation}
	The generating function for $p(N)$ \cite{euler1797introductio} is given by
	\begin{equation*}
		F(q)=\prod_{n=1}^{\infty}(1-q^{n})^{-1}=\sum_{N=0}^{\infty}p(N)q^{N}=q^{\frac{1}{24}}\frac{1}{\eta(z)}.
	\end{equation*}
	Similarly, the generating function for $p(N,t)$ (see \cite[(2.2)]{hagis1971partitions}) is 
	\begin{equation*}
		G(q,t)=\prod_{n=1}^{\infty}(1+q^{n}+q^{2n}+\cdots+q^{tn})=\frac{F(q)}{F(q^{t+1})}=\sum_{N=0}^{\infty}p(N,t)q^{N}.
	\end{equation*}
	
	By the Cauchy integral formula, the coefficient of $q^{N}$ in the above power series can be expressed as
	\begin{equation*}
		p(N,t)=\frac{1}{2\pi i}\int_{C}\frac{G(q,t)}{q^{N+1}}dq,
	\end{equation*}
	where $C$ is a simple, positively oriented closed contour around the origin and lies entirely inside the unit circle.  Let $C=\{z:q=\exp(2\pi i z)\}$. The requirement $|q| < 1$ is equivalent to the condition $y > 0$. Specifically, for a fixed $y$, as $x$ ranges over any interval of length $1$, the variable $q$ traces a complete circular contour of radius $e^{-2\pi y}$. Hence, it follows that
	\begin{equation}\label{eq-(1.1)}
		p(N,t)=\int_{-1/2}^{1/2}\exp\left(-2\pi izL\right)g_{t}(z)dx, 
	\end{equation}
	where
	\begin{align}\label{eq-gtz}
		g_{t}(z)=\frac{\eta((t+1)z)}{\eta(z)}\quad\text{and  } L=N+\frac{t}{24}.
	\end{align} 
	
	The above integral in (\ref{eq-(1.1)}) will be used in our saddle point analysis. We may note that the right-hand side of (\ref{eq-(1.1)}) depends on $y$, while the left-hand side is independent of $y$. This suggests that we optimize $y$, which may lead to an asymptotic formula for $p(N,t)$.
	\subsection{\texorpdfstring{Saddle point method}{}}
	Below is an outline of the saddle point method applied to a general sequence. 
	
	Let $b(N)$ be a sequence of positive numbers for all $N\ge 0$, with $b(0)=1$. Let the generating function associated with $b(N)$ be defined by
	\begin{align*}
		g(z)=\sum_{N=0}^{\infty}b(N)\exp(2\pi i Nz).
	\end{align*}
	So, we can write $b(N)$ as 
	\begin{align}\label{eq-intsad}
		b(N)=\int_{-1/2}^{1/2}\exp(-2\pi iNz)g(z)dx,
	\end{align}
	where $g(z)$ is assumed to have no zeros in the upper half plane.  Let
	\begin{align*}
		a(z)=-2\pi iNz+\log g(z).
	\end{align*}
	The saddle point $y$ is obtained by solving $a'(iy)=0$. Note that the Taylor expansion of $a(z)$ about $iy$ is given by
	\begin{align}\label{eq-Gtyl}
		\notag
		a(z)&=a(iy)+\frac{x^{2}}{2!}a''(iy)+\frac{x^{3}}{3!}a'''(iy)+  \frac{x^{4}}{4!}a''''(x'+iy)\\
		&=a(iy)+\frac{x^{2}}{2}a''(iy)\left(1+\frac{x}{3}\frac{a'''(iy)}{a''(iy)}+\frac{x^{2}}{12}\frac{a''''(x'+iy)}{a''(iy)}\right),
	\end{align}
	where $x'$ lies between $0$ and $x.$
	Consider a function of $y$, $e(y)< R$, where $R$ is the radius of convergence of the Taylor series in (\ref{eq-Gtyl}). With $e(y)$, we divide the range of integration in (\ref{eq-intsad}) into two parts:
	\begin{align*}
		|x|< e(y)\quad \text{and} \quad  e(y)\le |x|\le \frac{1}{2}.
	\end{align*}
	In the first region, we use the Taylor expansion of $a(z)$ to obtain the leading term. The contribution from the second region is absorbed into the error term. We may note
	\begin{align*}
		\int_{|x|< e(y)} \exp(a(z))dx=\exp(a(iy))\int_{|x|< e(y)}\exp\left(\frac{x^{2}}{2!}a''(iy)\left(1+\frac{x}{3}\frac{a'''(iy)}{a''(iy)}+\frac{x^{2}}{12}\frac{a''''(x'+iy)}{a''(iy)}\right)  \right) dx.
	\end{align*}
	Here, we bound $a'''(iy)$ and $a''''(iy)$ in terms of $a''(iy)$, which further contribute to the error term. Now, if we can show that $a''(iy) < 0$, then the remaining integral reduces to a Gaussian integral that can be evaluated easily. 
	
	
	We now state our main theorem, that gives an asymptotic formula for $p(N,t)$.
	
	\begin{theorem}\label{theorem-1.1}
		Let $N$ be a positive integer and $t \ge 4$. Then
		\begin{itemize}
			\item[\hypertarget{thm1.1i}{($i$)}] there exists a unique solution $y > 0$ satisfying
			\begin{equation}\label{eq-(1.3)}
				\frac{\mu_{1}((t+1)iy) - (t+1)\mu_{1}(iy)}{(t+1)y^{2}} = N + \frac{t}{24}. 
			\end{equation}
			Furthermore, $y$ lies in the range
			\begin{equation}\label{eq-(1.4)}
				\sqrt{\frac{t}{24(t+1)\left(N + \frac{t}{24}\right)}} < y < \frac{1}{\sqrt{24N - 1}}.
			\end{equation}
			\item[\hypertarget{thm1.1ii}{$(ii)$}] Corresponding to the above value of $y$,
				\begin{align*}
					p(N, t) =& \frac{\sqrt{t+1} y^{\frac{3}{2}}\exp\left(2\pi y\left(N + \frac{t}{24}\right)\right)\; \eta((t+1)iy)}{\sqrt{(t+1)\mu_{2}(iy) - \mu_{2}((t+1)iy)}\,\,\eta(iy)}\left(1+O(y)\right). 
				\end{align*} 
		\end{itemize}
	\end{theorem}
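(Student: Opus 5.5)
We follow the saddle point outline above with $g(z)=g_t(z)=\eta((t+1)z)/\eta(z)$ and $N$ replaced by $L=N+\tfrac{t}{24}$. Put $a(z)=-2\pi i L z+\log\eta((t+1)z)-\log\eta(z)$. From the definition \eqref{demu(z)}, $(\log\eta)'(w)=-2\pi i\,\mu_1(w)/w^2$. Hence
\begin{equation*}
a'(z)=-2\pi i L-2\pi i\left(\frac{\mu_1((t+1)z)}{(t+1)z^2}-\frac{\mu_1(z)}{z^2}\right),
\end{equation*}
and at $z=iy$ the equation $a'(iy)=0$ is exactly \eqref{eq-(1.3)}.

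\textbf{Part (i): existence, uniqueness and the bounds \eqref{eq-(1.4)}.} Write the left side of \eqref{eq-(1.3)} as $\Phi(y)$. We have $\mu_1(iy)/y^2=\tfrac{1}{24}-\sum_{n}\sigma(n)e^{-2\pi n y}$ up to sign conventions; this comes from $\log\eta(iy)=-\pi y/12+\sum\log(1-e^{-2\pi n y})$. It follows that
\begin{equation*}
\Phi(y)=\sum_{m\ge1}\sigma(m)e^{-2\pi m y}-\sum_{m\ge1}\sigma(m)e^{-2\pi (t+1)m y}+\frac{t}{24}.
\end{equation*}
The same quantity is $-\frac{1}{2\pi}\frac{d}{dy}\log g_t(iy)$, so $\Phi(y)=t/24+\sum_n n\,p(n,t)e^{-2\pi n y}/\sum_n p(n,t)e^{-2\pi n y}$, i.e. $t/24$ plus the mean of $n$ under the weights $p(n,t)e^{-2\pi ny}$. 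It is therefore strictly decreasing, since its derivative is $-2\pi$ times a variance. It tends to $\infty$ as $y\to0^+$ and to $t/24<L$ as $y\to\infty$. This gives a unique solution.

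For the bounds, use the modular transformation $\eta(-1/z)=\sqrt{z/i}\,\eta(z)$. It gives $\log\eta(iy)=-\tfrac{\pi}{12y}-\tfrac12\log y+\log\prod(1-e^{-2\pi n/y})$, and the analogous formula at $(t+1)y$. Substituting into $\Phi$ yields
\begin{equation*}
\Phi(y)=\frac{t}{24(t+1)y^2}+(\text{explicit correction}).
\end{equation*}
The correction involves $-\tfrac{1}{4\pi y}\cdot 0$ terms (the $\log y$ pieces cancel up to a constant in $y$) and the exponentially small sums $\sum\sigma(m)e^{-2\pi m/y}$, etc. Showing the correction has a definite sign gives $\Phi(y)>\frac{t}{24(t+1)y^2}$, hence the lower bound. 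For the upper bound, compare $\Phi$ with the unrestricted case: $\Phi(y)<\frac{1}{24y^2}+\frac{t}{24}-\frac1{24}+\dots$, which should invert to $y<1/\sqrt{24N-1}$. Monotonicity of $\Phi$ converts these inequalities into \eqref{eq-(1.4)}. The restriction $t\ge4$ is presumably what makes the sign checks on these correction terms hold.

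\textbf{Part (ii).} Compute $a''(iy)$. Differentiating once more and using the $\mu_2$ notation gives
\begin{equation*}
a''(iy)=-2\pi\,\frac{(t+1)\mu_2(iy)-\mu_2((t+1)iy)}{(t+1)y^3}
\end{equation*}
up to the exact constant. Via the variance interpretation, or the modular expansion, this is negative and of size $\asymp y^{-3}$. The Gaussian integral then gives
\begin{equation*}
\exp(a(iy))\sqrt{2\pi/|a''(iy)|}.
\end{equation*}
Here $\exp(a(iy))=e^{2\pi yL}\eta((t+1)iy)/\eta(iy)$, and the square root produces the stated $\sqrt{t+1}\,y^{3/2}/\sqrt{(t+1)\mu_2(iy)-\mu_2((t+1)iy)}$.

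Choose $e(y)=y^{1+\delta}$, roughly with $\delta\in(1/3,1/2)$, e.g. $y^{4/3}$. Bound $a'''$ and $a''''$ on $|z-iy|<y/2$ by $O(y^{-4})$ and $O(y^{-5})$ via the modular form of $\log\eta$. Then on $|x|<e(y)$ the cubic term is $x^3y^{-4}=o(1)$, and after symmetrization the odd term vanishes. What remains contributes a relative error of $O(x^2/y^2\cdot\ldots)$, which after Gaussian integration is $O(y)$.

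\textbf{Main obstacle: the tail $e(y)\le|x|\le1/2$.} There we must show $|g_t(x+iy)|\le g_t(iy)\exp(-c\,y^{-1+2\delta'})$, uniformly in $t\ge4$. Unlike $1/\eta$, the quotient $g_t$ has the numerator $\eta((t+1)z)$, which can be large near rationals $x=h/k$ with $k\mid t+1$. I would write $\log|g_t(z)|-\log g_t(iy)=\sum_{n}\big(\log|1-q^{n}|^{-1}-\log|1-q^{(t+1)n}|^{-1}\big)-(\ldots)$ termwise in the form $\sum_m \frac{1}{m}\frac{|q|^{m}}{1-|q|^m}(\cos 2\pi mx-1)$ minus the $(t+1)$-dilated analogue. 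The $m=1$ term gives the decay $-(1-\cos2\pi x)/(2\pi y)$. The dilated sum has total size at most $\frac{\pi}{6(t+1)y}$, and this is exactly the saving: $\frac{1}{t+1}\le\frac15$ when $t\ge4$. I would then split into near-major arcs ($|x|\lesssim y$, handled by the modular transformation at $x=0$) and the remaining range (handled by this Fourier comparison). The tail must end up $O(y)$ relative to the main term.
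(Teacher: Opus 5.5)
\textbf{Overall.} Your route differs genuinely from the paper's, and its architecture works. However, the tail estimate as you wrote it rests on a wrong identity and does not close. There are also loose ends in (i).

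\textbf{Comparison.} For uniqueness you use that $\Phi(y)-\frac t{24}$ is the mean of $n$ under the weights $p(n,t)e^{-2\pi ny}$, so $\Phi'=-2\pi\,\mathrm{Var}<0$. The paper instead differentiates and invokes the lower bound of Lemma~\ref{lemma-mu2}, which is only proved for $0<y\le\frac1{10}$. Your argument is cleaner and holds for all $y>0$. It also gives $(t+1)\mu_2(iy)-\mu_2((t+1)iy)=2\pi(t+1)y^3\,\mathrm{Var}>0$ for free. It does not give the uniform size $\asymp t+1$, nor $|a'''|\ll y^{-4}$, $|a''''|\ll y^{-5}$ uniformly in $t$; those are what Lemmas~\ref{lemma-mu2}--\ref{lemma-mu4} supply, and you need them for a $t$-uniform $O(y)$.

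For the tail, the paper reduces by $\gamma\in\mathrm{SL}_2(\mathbb Z)$ and splits into cases on $(t+1)y$, $\operatorname{Im}\gamma z$ and $\gcd(t+1,c)$ (Proposition~\ref{proposition-error}). Comparing the decay of $1/\eta(z)$ with a global bound on the growth of $\eta((t+1)z)$ avoids all of that and shows where $t\ge4$ enters. Incidentally, the numerator is smallest, not large, at $x=h/k$ with $k\mid t+1$: $|\eta(w)|\ge\eta(i\operatorname{Im}w)$, with equality iff $\operatorname{Re}w\in\mathbb Z$.

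\textbf{The error.} The sum $\sum_n|q|^{mn}\cos(2\pi mnx)$ equals $\operatorname{Re}\frac{q^m}{1-q^m}$, not $\frac{|q|^m}{1-|q|^m}\cos2\pi mx$. With your $m=1$ term the decay is $\frac{1-\cos2\pi x}{2\pi y}\approx\frac{\pi x^2}{y}$. For $t=4$ this is below your growth bound $\frac{\pi}{30y}$ whenever $|x|<0.19$. So the range $y\ll|x|<0.19$ is covered neither by the modular argument at $x=0$ nor by the comparison.

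The correct identity, with $r=e^{-2\pi y}$ and $P(q)=\prod_n(1-q^n)^{-1}$, is
\[
\log P(r)-\log|P(q)|=\sum_{m\ge1}\frac1m\Bigl(\frac{r^m}{1-r^m}-\operatorname{Re}\frac{q^m}{1-q^m}\Bigr),
\]
with every term $\ge0$. Its $m=1$ term is
\[
\frac{r}{1-r}-\operatorname{Re}\frac{q}{1-q}=\frac{r(1+r)}{1-r}\cdot\frac{1-\cos2\pi x}{(1-r)^2+2r(1-\cos2\pi x)},
\]
which is increasing in $1-\cos2\pi x$. Hence it is $\ge(1+O(y))\frac{C^2}{1+C^2}\cdot\frac1{2\pi y}$ on $Cy\le|x|\le\frac12$.

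On the numerator side, $\frac1{e^u-1}\le\frac1u$ gives $\log\bigl|\eta((t+1)z)/\eta((t+1)iy)\bigr|\le2\log P(r^{t+1})\le\frac{\pi}{6(t+1)y}$ for every $y$, with no split on $(t+1)y$. Since $\frac45\cdot\frac1{2\pi}>\frac{\pi}{30}$, taking $C=2$ gives $|g_t(z)/g_t(iy)|\le e^{-c/y}$ on $2y\le|x|\le\frac12$ for $t\ge4$. Only $e(y)\le|x|\le2y$ is then left for the modular transformation at $x=0$.

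\textbf{Loose ends in (i).} The sign of the correction holds for every $t\ge1$ and $y>0$; it has nothing to do with $t\ge4$. The correction is $\frac{h(2\pi/((t+1)y))-(t+1)h(2\pi/y)}{(t+1)y^2}$ with $h(v)=\sum_n\sigma(n)e^{-nv}$. Since $vh(v)=\sum_m\frac{mv}{e^{mv}-1}$ is decreasing, the correction is positive. The paper only asserts this, and only for $(t+1)y<1$.

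For the upper bound, write $\Phi(y)=\frac t{24}+S(y)-(t+1)S((t+1)y)$ with $S(y)=\sum_m\sigma(m)e^{-2\pi my}$; your display of $\Phi$ drops the factor $t+1$. Discard the last term and use $S(y)=\frac1{24}-\frac{\mu_1(iy)}{y^2}<\frac1{24}+\frac1{24y^2}$, which follows from $\mu_1(iy)>-\frac1{24}$. This gives $L<\frac{t+1}{24}+\frac1{24y^2}$, which is exactly $y<(24N-1)^{-1/2}$.
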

	However, the above formula is difficult to use in various applications directly, as $y$ is not explicit. In the next theorem, we solve this problem and obtain explicit expressions of $p(N,t)$ in different ranges of $t$.
	\begin{theorem}\label{theorem-1.2}
		Let N be a positive integer with $t+1\le N$. Then the following asymptotic formulas hold for $p(N,t)$:
		\begin{itemize}
			\item[\hypertarget{thm1.2i}{$(i)$}] If $(t + 1)\le \frac{2\pi}{(\frac{1}{2}+\epsilon)\log \left(N+\frac{t}{24}\right)}\sqrt{\frac{24(t+1)\left(N+\frac{t}{24}\right)}{t}} $ and $\epsilon>0$, then 
			\begin{equation*}
				p(N,t) =\frac{\sqrt{12}A_{t}(N)\exp\left(\frac{2\pi}{\sqrt{6}}\sqrt{\frac{t}{t+1}\left(N+\frac{t}{24}\right)}\right)}{\exp\left(\rho_1\exp\left(-2\pi\sqrt{\frac{(24N+t)}{t(t+1)}}+O\left(N^{-\frac{1}{2}}\right)\right)\right)}\left(1+O\left(N^{-\frac{1}{2}}\right)\right),
			\end{equation*}
			where $1 < \rho_{1} < 1.00873$ and 
			\begin{align*}
				A_{t}(N)=t^{\frac{1}{4}} \left((t+1)(24N+t)\right)^{-\frac{3}{4}}.
			\end{align*}
			\item[\hypertarget{thm1.2ii}{($ii$)}]
			If $\frac{2\pi}{(\frac{1}{2}+\epsilon)\log \left(N+\frac{t}{24}\right)}\sqrt{\frac{24(t+1)\left(N+\frac{t}{24}\right)}{t}}<t+1\le\sqrt{24N}$, where $A_t(N)$ is defined in $(i)$, then
			\begin{align*}
				p(N,t)&\le \frac{\sqrt{12}(2t+1)^{\frac{3}{4}}\exp\left(\frac{2\pi}{\sqrt{6}}\sqrt{\frac{2t+1}{2t+2}\left(N+\frac{t}{24}\right)}\right)}{\sqrt{t-1}\left((24N+t)(2t+2)\right)^{\frac{3}{4}}\exp\left(\exp\left(-2\pi\sqrt{\frac{24N+t}{t(t+1)}}\right)\right)}\left(1+O\left(N^{-\frac{1}{2}}\right)\right). 
			\end{align*}
			\item[\hypertarget{thm1.2iii}{($iii$)}] If $(t + 1) > \sqrt{24N}$, then 
			\begin{equation*}
				p(N,t)=p(N)\exp\left(-\nu_1\exp\left(-\frac{\pi (t+1)}{\sqrt{6N}}\left(1+O\left(N^{-\frac{1}{2}}\right)\right)\right)\right)\left(1+O\left(N^{-\frac{1}{2}}\right)\right),
			\end{equation*}
			where $1 < \nu_{1} < 1.00873$. 
		\end{itemize}
	\end{theorem}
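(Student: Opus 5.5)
We prove Theorem~\ref{theorem-1.2} by inserting explicit approximations of the saddle point $y$ into Theorem~\ref{theorem-1.1}\hyperlink{thm1.1ii}{$(ii)$}. The tool throughout is the modular transformation $\eta(-1/z)=\sqrt{z/i}\,\eta(z)$. With $z=iy$ it gives
\[
\log\eta(iy)=-\frac{\pi}{12y}-\tfrac12\log y+\sum_{n\ge1}\log\bigl(1-e^{-2\pi n/y}\bigr),
\]
and similarly for $(t+1)iy$. The size of the two arguments of $\eta$ decides which expansion is useful. Since $y\asymp N^{-1/2}$ by \eqref{eq-(1.4)}, the variable $1/y$ is large, so $\eta(iy)$ is always governed by its modular (dual) expansion with exponentially small corrections $e^{-2\pi/y}$. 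The factor $\eta((t+1)iy)$ changes behaviour according to whether $(t+1)y$ is small or large, that is, whether $t+1\lesssim\sqrt{N}$ or $t+1\gtrsim\sqrt N$. This is what produces the three ranges.

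\textbf{Case $(i)$.} Here $(t+1)y$ is small, so I will also apply the modular expansion to $\eta((t+1)iy)$, which produces correction terms of size $e^{-2\pi/((t+1)y)}$. Differentiating the expansions gives $\mu_1$ and $\mu_2$ up to these exponentially small terms. Substituting into \eqref{eq-(1.3)}, the main terms give
\[
y_0=\sqrt{\frac{t}{24(t+1)(N+t/24)}},
\]
with relative error $O(y^{-1}e^{-2\pi/((t+1)y)})$. The hypothesis on $t$ is exactly what makes $e^{-2\pi/((t+1)y_0)}\le (N+t/24)^{-1/2-\epsilon}$, so this error is $O(N^{-1/2})$. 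Plugging $y$ into the exponent $2\pi yL-\frac{\pi}{12 y}+\frac{\pi}{12(t+1)y}$ should then give $\frac{2\pi}{\sqrt6}\sqrt{\frac{t}{t+1}L}$, because the first-order change in $y$ vanishes at the saddle point. The surviving correction $\sum\log(1-e^{-2\pi n/((t+1)y)})$ equals $-\rho_1 e^{-2\pi/((t+1)y)}$, where $\rho_1=\sum_n\frac1n e^{-2\pi(n-1)/((t+1)y)}$-type factors are squeezed between $1$ and $1.00873$ using $t\ge4$ (or the admissible range of $(t+1)y$). Finally, the square-root prefactors $y^{3/2}$ and $\sqrt{(t+1)\mu_2(iy)-\mu_2((t+1)iy)}$ combine with $\sqrt{t+1}$ and the $y^{1/2}$ factors coming from $\eta$ to give $\sqrt{12}A_t(N)$.

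\textbf{Case $(iii)$.} Here $(t+1)y\gtrsim1$. I will keep the $q$-expansion $\eta((t+1)iy)=e^{-\pi(t+1)y/12}\prod_n(1-e^{-2\pi n(t+1)y})$ directly, so that $g_t(iy)$ equals $1/\eta(iy)$ times a factor $\exp(-\nu_1e^{-2\pi(t+1)y})$. The saddle point then agrees with the one for $p(N)$, namely $y\approx1/\sqrt{24N}$, up to relative error $O(N^{-1/2})$, giving $2\pi(t+1)y=\frac{\pi(t+1)}{\sqrt{6N}}(1+O(N^{-1/2}))$. Comparing the result with the same saddle point evaluation of $p(N)$, i.e.\ \eqref{eq-hardy}, leaves $p(N)$ multiplied by this exponential factor.

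\textbf{Case $(ii)$.} Only an upper bound is claimed here, and I will argue by monotonicity rather than compute the saddle point exactly. The right side of \eqref{eq-(1.1)} is at most $e^{2\pi yL}g_t(iy)$ times the Gaussian width, for \emph{any} $y>0$. I will choose a convenient trial $y$, such as $\sqrt{(2t+1)/(24(2t+2)L)}$, and bound $\eta((t+1)iy)$ using $1\lesssim(t+1)y$ together with crude inequalities. This produces the factor $(2t+1)/(2t+2)$ and the $\sqrt{t-1}$ in the prefactor.

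The main obstacle is case $(i)$: controlling how the inexact saddle point propagates through the square-root prefactor and the exponent, and pinning down the numerical constant $\rho_1<1.00873$ uniformly in the stated range.
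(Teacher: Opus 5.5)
\textbf{Parts (i) and (iii).} These follow the paper's proof. In (i) you apply the modular expansion (Lemma \ref{lemma-2.2}) to both $\eta$-factors, reduce the saddle equation to \eqref{eq-AB}, and show the $\mu_2$-combination is $\frac{t}{12}(1+O(N^{-1/2}))$. In (iii) you keep $\eta((t+1)iy)$ in $q$-expanded form (Lemma \ref{lemma-2.1}) and compare with \eqref{eq-hardy}.

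There is one slip in (i). From \eqref{eq-AB}, $y=y_0\sqrt{1+24(B_1-B_2)/t}$. So the relative error in $y$ is $O\bigl(e^{-2\pi/((t+1)y)}/t\bigr)$, not $O\bigl(y^{-1}e^{-2\pi/((t+1)y)}\bigr)$: the factor $z^2$ in $\mu_1$ absorbs the chain-rule factor. With the error you wrote, the hypothesis on $t$ would only give $O(N^{-\epsilon})$, not $O(N^{-1/2})$. Your second-order (critical-point) argument for the exponent is fine.

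\textbf{Part (ii): a different route.} The paper works at the true saddle point. It brackets $y_0<y<y_1$, where $y_1=\sqrt{(2t+1)/(24(2t+2)L)}$ comes from $B_1\le\sum\sigma(n)e^{-2\pi n}\le\frac1{48}$. It then uses that $h(y)=2\pi yL+\frac{\pi t}{12(t+1)y}$ is increasing on $(y_0,\infty)$.

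You instead bound $\int|e^{-2\pi izL}g_t(z)|\,dx$ at a trial $y$. This is legitimate, but you need to say why. The linear and cubic terms of \eqref{eq-taylor} are pure phases, since $\mu_k(iy)\in\mathbb{R}$, so the near-saddle hypothesis \eqref{eq-assumL} of Theorem \ref{theorem-main} is irrelevant for the modulus. The tail is handled by Proposition \ref{proposition-error}, which needs $y\le 1/1000$, so ``any $y>0$'' is too strong. The gain is that you never have to locate the saddle point. Taking $y=y_0$ even gives a sharper bound, with $\frac{t}{t+1}$ in place of $\frac{2t+1}{2t+2}$.

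\textbf{The genuine error in (ii).} You assign the wrong regime to $\eta((t+1)iy)$. Put $s=\frac{2t+1}{2t+2}$. In range (ii), $(t+1)y_1\le\sqrt{24N}\sqrt{s/(24L)}<1$, and near the lower end of the range $(t+1)y\asymp 1/\log N$. So ``$1\lesssim(t+1)y$'' is false there. Crude $q$-expansion bounds such as $|\prod(1-e^{-2\pi n(t+1)y})|\le 1$ discard exactly the saving that lowers $\sqrt N$ to $\sqrt{sL}$ in the exponent, so they cannot give the stated inequality.

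The fix is to use Lemma \ref{lemma-2.2} for $\eta((t+1)iy)$, as in (i). This gives $g_t(iy)=(t+1)^{-1/2}\exp\bigl(\frac{\pi t}{12(t+1)y}-\rho_1e^{-2\pi/((t+1)y)}+\rho_2e^{-2\pi/y}\bigr)$. The factor $\sqrt{t-1}$ then comes from part (ii) of Lemma \ref{lemma-mu2}, which itself requires $(t+1)y<1$; part (i) would give $\frac{t+1}{16}$ instead.

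With $y=y_1$ the three factors of the stated bound come out as follows:
\begin{itemize}
\item $h(y_1)<\frac{2\pi}{\sqrt6}\sqrt{sL}$, because $\frac{t}{t+1}<s$.
\item $y_1^{3/2}\sqrt{12/(t-1)}$ is exactly the stated prefactor.
\item $\rho_1>1$ together with $y_1>y_0$ gives the double-exponential denominator.
\end{itemize}
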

	For $t > \sqrt{24N}$, the asymptotic behavior of $p(N,t)$ is relatively straightforward and closely approximates $p(N)$. However, the case where $t \le \sqrt{24N}$ exhibits more intricate behavior due to deeper links with the modular and analytic properties of $g_t(z)$.
	
	The following theorem is a special case of part~\hyperlink{thm1.2i}{$(i)$} of Theorem \ref{theorem-1.2}, and provides an asymptotic formula for $p(N,t)$ when $t$ is fixed, and coincides with Hagis' asymptotic formula~\cite{hagis1971partitions}.
	\begin{theorem}[{\cite[Corollary 4.2]{hagis1971partitions}}]\label{corollary-1.3}
		For fixed $t$, $4\le t \le N$, we have
		\begin{equation*}
			p(N,t)=\sqrt{12}t^{\frac{1}{4}}((t+1)(24N+t))^{-\frac{3}{4}}\exp\left(\frac{2\pi}{\sqrt{6}}\sqrt{\frac{t}{t+1}\left(N+\frac{t}{24}\right)}\right)\left(1+O\left(N^{-\frac{1}{2}}\right)\right),
		\end{equation*}
		as $N\to \infty.$
	\end{theorem}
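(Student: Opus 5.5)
We deduce Theorem~\ref{corollary-1.3} from part~\hyperlink{thm1.2i}{$(i)$} of Theorem~\ref{theorem-1.2}, so the work is to check the hypotheses and simplify the correction factor.

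First, the hypothesis of part $(i)$. Fix $t\ge 4$ and let $N\to\infty$; then $t+1\le N$ eventually. The right-hand side of the condition in part $(i)$ is
\[
\frac{2\pi}{(\frac12+\epsilon)\log(N+\frac{t}{24})}\sqrt{\frac{24(t+1)(N+\frac{t}{24})}{t}} \gg \frac{\sqrt N}{\log N},
\]
with an implied constant depending only on $t$ and $\epsilon$. It therefore tends to infinity. Take, say, $\epsilon=1$; then for all $N$ large in terms of $t$ it exceeds the fixed number $t+1$. So part $(i)$ applies, giving
\[
p(N,t)=\sqrt{12}A_t(N)\exp\left(\frac{2\pi}{\sqrt6}\sqrt{\frac{t}{t+1}\left(N+\frac{t}{24}\right)}\right)\exp\left(-\rho_1 E\right)\left(1+O(N^{-1/2})\right),
\]
where
\[
E=\exp\left(-2\pi\sqrt{\frac{24N+t}{t(t+1)}}+O(N^{-1/2})\right).
\]

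Next, we absorb the correction factor. Since $t$ is fixed, $\sqrt{(24N+t)/(t(t+1))}\ge c_t\sqrt N$ with $c_t>0$. The inner $O(N^{-1/2})$ term is bounded, so $E\ll \exp(-2\pi c_t\sqrt N)$. Using $1<\rho_1<1.00873$, we get $\rho_1E=O(e^{-c\sqrt N})$, and hence $\exp(-\rho_1E)=1+O(e^{-c\sqrt N})=1+O(N^{-1/2})$. Combining this with the existing $(1+O(N^{-1/2}))$ factor and substituting $A_t(N)=t^{1/4}((t+1)(24N+t))^{-3/4}$ gives exactly the stated formula.

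The only point needing care is uniformity of constants. The $O$-terms in Theorem~\ref{theorem-1.2} are taken to be uniform in $t$, or at least valid for each fixed $t$. Since $t$ is fixed here, any dependence on $t$ is harmless. The claim that this matches Hagis's Corollary 4.2 is a direct comparison of the two formulas and needs no argument.
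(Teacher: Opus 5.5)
Your proposal is correct and is essentially the paper's argument: the paper also presents Theorem~\ref{corollary-1.3} as the fixed-$t$ case of part~(i) of Theorem~\ref{theorem-1.2}. In that case the factor $\exp\bigl(\rho_1\exp(-2\pi\sqrt{(24N+t)/(t(t+1))}+O(N^{-1/2}))\bigr)$ is $1+O(e^{-c\sqrt{N}})$ and is absorbed into the error term. The only difference is cosmetic: the paper's remark goes back into the proof and uses the sharper estimate $y=\sqrt{t/(24(t+1)(N+t/24))}+O(N^{-2})$, whereas you apply part~(i) as a black box and absorb that factor explicitly, which is all that is needed.
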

	Using the above asymptotic result, McSpirit and Ono \cite{mcspirit2023zeros} established several results concerning zeros in the character tables of the symmetric groups $S_N$ for fixed $t$. Below, we discuss an application of Theorem~\ref{theorem-1.2} to estimate the number of zeros in the character table of the symmetric group. This improves our result in Theorem~1.3 of \cite{barman2025lower}.
	\subsection{\texorpdfstring{Application to Zeros in the character table of the symmetric group}{ Applications of Theorem 1.2}} 
	Given two partitions $\lambda$ and $\mu$ of a positive integer $N$. Let $\chi_{\lambda}(\mu)$ be the value of the irreducible character of the symmetric group $S_{N}$ labeled by $\lambda$, evaluated at the conjugacy class corresponding to $\mu$. As in \cite{mcspirit2023zeros}, we define
	\begin{equation*}
		Z_{t}(N) := \#\{(\lambda,\mu) : \chi_{\lambda}(\mu) = 0 \; \text{with $\lambda$ being a $t$-core}\}.
	\end{equation*}
	In the following theorem, we generalize the bounds of McSpirit and Ono~\cite{mcspirit2023zeros} for $Z_t(N)$ for arbitrary $t$. 
	The proof relies on Theorem \ref{theorem-1.2}, together with existing bounds for the number of $t$-core partitions $c_t(N)$, established in \cite{tyler2024asymptotics, tyler2024tcore} and subsequently simplified in \cite{barman2025lower}.
	\begin{theorem}\label{theorem-1.4}
		Let $N\ge 100$ be a large positive integer, and let $t \le N$. Let $A_{t}(N)$ be defined as in Theorem \ref{theorem-1.2} and $1<\rho_1, \nu_1<1.00873$. Then the following statements hold:
		\begin{itemize}
			\item[\hypertarget{thm1.4i}{($i$)}]For all $0<\epsilon<1$, if $6\leq t \leq \frac{2\pi\sqrt{2N}}{\sqrt{(1+\epsilon)\log N}}$, then
			\begin{equation*}
				Z_{t}(N)\ge c_{t}(N)(p(N)-p(N,t-1)), 
			\end{equation*}
			where 
			\begin{equation*}
				c_{t}(N)=\frac{(4\pi e)^{\frac{t-1}{2}}(t-1)}{\sqrt{4\pi}(t^{2}-t)^{\frac{t}{2}}} \left(N+\frac{t^{2}-1}
				{24}\right)^{\frac{t-3}{2}}(1+O(t^{-\epsilon}))   
			\end{equation*}
			and
			\begin{equation*}
				p(N,t-1)=\frac{\sqrt{12}A_{t-1}(N)\exp\left(\frac{2\pi}{\sqrt{6}}\sqrt{\frac{t-1}{t}\left(N+\frac{t-1}{24}\right)}\right)}{\exp\left(\rho_{1}\exp\left(-2\pi\sqrt{\frac{\left(24N+t-1\right)}{t(t-1)}}+O\left(N^{-\frac{1}{2}}\right)\right)\right)}\left(1+O\left(N^{-\frac{1}{2}}\right)\right).   
			\end{equation*} 
			\item[\hypertarget{thm1.4ii}{($ii$)}] For  
			$\frac{2\pi\sqrt{2N}}{\sqrt{\log N}}< t < f(N)$, where $f(N)\sim \frac{\sqrt{24N}}{\sqrt{\frac{6}{\pi}-1}}$, we have
			\begin{equation*}
				Z_{t}(N)\ge c_{t}(N)(p(N)-p(N,t-1)), 
			\end{equation*}
			where 
			\begin{equation*}
				c_{t}(N)\ge \frac{2\sqrt{\pi}\exp\left(\frac{t}{2}-1.00873te^{-2\pi}\right)\left(\frac{\pi}{6}(24N+t^{2}-1)\right)^{\frac{t-3}{2}}}{t^{t-1}} \left(1+O\left(t^{-1}\right)\right)   
			\end{equation*}
			and
			\begin{equation*}
				p(N,t-1)\le \frac{\sqrt{12}(2t-1)^{\frac{3}{4}}\exp\left(\frac{2\pi}{\sqrt{6}}\sqrt{\frac{2t-1}{2t}\left(N+\frac{t-1}{24}\right)}\right)}{\sqrt{t-2}\left((24N+t-1)2t\right)^{\frac{3}{4}}\exp\left(\exp\left(-2\pi\sqrt{\frac{24N+t-1}{t(t-1)}}\right)\right)}\left(1+O\left(N^{-\frac{1}{2}}\right)\right).   
			\end{equation*}
			\item[\hypertarget{thm1.4iii}{($iii$)}]  For $t \ge f(N)$, where $f(N)$ is defined in part (ii), we have
			\begin{align*}
				Z_{t}(N)\ge p(N)^{2}\exp\left(-1.00873t\exp\left(-\frac{t(t-1)}{2\left(N+\frac{t^{2}-1}{24}\right)}\right)\right)\left(1-\exp\left(-\nu_{1}\exp\left(-\frac{\pi t}{\sqrt{6N}}\right)\right)\right)\\
				\times\left(1+O\left(N^{-\frac{1}{2}}\right)\right).  
			\end{align*}
		\end{itemize}
	\end{theorem}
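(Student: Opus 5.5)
The proof of Theorem~\ref{theorem-1.4} reduces to a combinatorial lower bound, after which Theorem~\ref{theorem-1.2} and the known $t$-core asymptotics are inserted. The combinatorial input is the standard one of McSpirit and Ono \cite{mcspirit2023zeros}. If $\lambda$ is a $t$-core, then $\lambda$ has no hook of length divisible by $t$. Take any $\mu\vdash N$ that has at least one part divisible by $t$, that is, $\mu$ is not $t$-regular. Iterating the Murnaghan--Nakayama rule, first stripping a rim hook of that part's length (a multiple of $t$), shows $\chi_\lambda(\mu)=0$. Hence every pair consisting of a $t$-core $\lambda$ and a non-$t$-regular $\mu$ contributes to $Z_t(N)$. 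The number of such $\mu$ is $p(N)-p(N,t-1)$, since $p(N,t-1)$ counts $t$-regular partitions. This gives the uniform inequality
\begin{equation*}
Z_t(N)\ge c_t(N)\bigl(p(N)-p(N,t-1)\bigr),
\end{equation*}
valid for every $t\le N$. It is the common inequality behind all three parts.

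For part (i), apply Theorem~\ref{theorem-1.2}(i) with $t$ replaced by $t-1$. The hypothesis $t\le 2\pi\sqrt{2N}/\sqrt{(1+\epsilon)\log N}$ must be checked to imply the range condition of Theorem~\ref{theorem-1.2}(i) for $t-1$. Using $\sqrt{24(t)(N+\frac{t-1}{24})/(t-1)}\approx\sqrt{24N}$ and $\log(N+\frac{t-1}{24})\sim\log N$, the condition is roughly $t\lesssim 4\pi\sqrt{6N}/((1+2\epsilon)\log N)$. This requires matching the $\log N$ versus $\sqrt{\log N}$ normalisations and possibly renaming $\epsilon$, and it is the bookkeeping I would verify carefully. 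The asymptotic for $c_t(N)$ is quoted from \cite{tyler2024asymptotics, tyler2024tcore} in the simplified form of \cite{barman2025lower}, valid in this range with error $O(t^{-\epsilon})$.

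Part (ii) works the same way. Theorem~\ref{theorem-1.2}(ii) with $t-1$ gives the stated upper bound for $p(N,t-1)$. This upper bound is exactly what a lower bound for $p(N)-p(N,t-1)$ needs. The lower bound for $c_t(N)$ in the regime $t\asymp\sqrt N$ is taken from \cite{barman2025lower}. The constant $1.00873$ comes from bounding $\sum_n \sigma(n)e^{-2\pi n}$-type tails, as for $\rho_1$. The upper limit $f(N)$ is where this form of the $c_t$ bound remains valid.

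For part (iii) with $t\ge f(N)$, two cases arise. If $t>\sqrt{24N}$, Theorem~\ref{theorem-1.2}(iii) gives $p(N)-p(N,t-1)=p(N)\bigl(1-\exp(-\nu_1\exp(-\pi t/\sqrt{6N}))\bigr)(1+O(N^{-1/2}))$. The remaining window $f(N)\le t\le\sqrt{24N}$ would need either that $f(N)>\sqrt{24N}$, which holds since $\sqrt{6/\pi-1}<1$, or the monotone bound. For $c_t(N)$, write $c_t(N)$ as $p(N)$ times $\prod_{n}(1-q^{tn})^{t}$ evaluated at the saddle point. The saddle point method, with $y\approx 1/\sqrt{24N}$ adjusted by the $t^2/24$ shift, gives $c_t(N)\ge p(N)\exp(-1.00873\,t\exp(-t(t-1)/(2(N+\frac{t^2-1}{24}))))$, again citing \cite{barman2025lower}. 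Multiplying the two bounds gives the result.

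The main obstacle is the range-matching in parts (i) and (ii): the hypotheses on $t$ stated here must imply the hypotheses of Theorem~\ref{theorem-1.2} and of the cited $c_t$ asymptotics after the shift $t\mapsto t-1$. I would handle this first, by monotonicity of $u\mapsto\sqrt{u/(u+1)}$ and $N\ge100$.
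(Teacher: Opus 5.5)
Your route matches the paper's. Its proof is the single sentence ``Proposition~2.7 of \cite{barman2025lower} and Theorem~\ref{theorem-1.2}'': the inequality $Z_t(N)\ge c_t(N)\bigl(p(N)-p(N,t-1)\bigr)$, Theorem~\ref{theorem-1.2} at $t-1$, and the quoted $c_t(N)$ bounds. Your Murnaghan--Nakayama derivation of that inequality is correct and is exactly the cited input.

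However, the range check you defer to ``bookkeeping'' in (i) cannot be done. Theorem~\ref{theorem-1.2}(i) at $t-1$ needs $t\le \frac{2\pi}{(\frac12+\epsilon')\log(N+\frac{t-1}{24})}\sqrt{\frac{24t(N+\frac{t-1}{24})}{t-1}}$, i.e.\ $t\lesssim \frac{4\pi\sqrt{24N}}{(1+2\epsilon')\log N}$ (your constant is off by a factor $2$). Part (i), by contrast, allows $t$ up to $\frac{2\pi\sqrt{2N}}{\sqrt{(1+\epsilon)\log N}}$. These thresholds differ by a factor $\asymp\sqrt{\log N}$. Once $\log N$ exceeds about $48(1+\epsilon)$, there are admissible $t$ outside Theorem~\ref{theorem-1.2}(i) for every $\epsilon'>0$. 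Neither monotonicity nor renaming $\epsilon$ turns $\log N$ into $\sqrt{\log N}$. In that window $B_1\approx \exp(-2\pi\sqrt{24N}/t)$ decays more slowly than any power of $N$. So the proof of Theorem~\ref{theorem-1.2}(i) does not give the displayed formula for $p(N,t-1)$ with error $O(N^{-1/2})$; only the upper bound of Theorem~\ref{theorem-1.2}(ii) is available. The same thing happens at the top of (ii). You note yourself that $f(N)>\sqrt{24N}$, while Theorem~\ref{theorem-1.2}(ii) at $t-1$ requires $t\le\sqrt{24N}$, so it says nothing for $\sqrt{24N}<t<f(N)$.

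The subtraction in (iii) is not just imprecise; it is false for large $t$. For $t>\sqrt{24N}$, Theorem~\ref{theorem-1.2}(iii) gives $p(N,t-1)=p(N)(1-\delta)\bigl(1+O(N^{-1/2})\bigr)$ with $\delta=1-\exp\bigl(-\nu_1 e^{-\frac{\pi t}{\sqrt{6N}}(1+O(N^{-1/2}))}\bigr)<0.002$. Hence you only get $p(N)-p(N,t-1)=p(N)\bigl(\delta+O(N^{-1/2})\bigr)$. The relative error on $\delta$ is $O(N^{-1/2}/\delta)$, not $O(N^{-1/2})$. Also, the inner factor $1+O(N^{-1/2})$ that you dropped moves the exponent by $O(t/N)$. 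This is harmless for $t=O(\sqrt N)$. But for $t\ge \frac{\sqrt{6N}}{\pi}\log N$ one has $\delta\ll N^{-1}$, and the subtraction yields no positive lower bound at all.

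At $t=N$ your displayed identity fails outright. Here $p(N)-p(N,N-1)=1$, since only the partition $(N)$ has a part divisible by $N$, while $p(N)\delta\asymp N^{-1}e^{\pi\sqrt{N/6}}$. So for large $t$ the inequality $Z_t(N)\ge c_t(N)(p(N)-p(N,t-1))$ cannot deliver the bound in (iii); at $t=N$ it gives only $Z_N(N)\ge c_N(N)$. The paper's one-line proof goes through the same subtraction and does not supply the missing argument either. So (iii) has to be restricted to $t=O(\sqrt N)$ or proved by a different method.
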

	The following corollary is a result of McSpirit and Ono \cite{mcspirit2023zeros} and follows immediately  from \hyperlink{thm1.4i}{${(i)}$} of Theorem~\ref{theorem-1.4}.
	
	\begin{corollary}[{\cite[Theorem 1.1]{mcspirit2023zeros}}]
		Let $t \ge 5$ be a fixed positive integer. Then, as $N \to \infty$, we have
		\begin{equation*}
			Z_{t}(N)\sim c_{t}(N)p(N)\gg_{t}N^{\frac{t-5}{2}} \exp\left(\frac{2\pi}{\sqrt{6}} \sqrt{N}\right).
		\end{equation*}
	\end{corollary}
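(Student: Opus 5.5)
The corollary should follow from Theorem~\ref{theorem-1.4}(i) by fixing $t\ge 5$ and letting $N\to\infty$. The plan has three steps: show $p(N,t-1)$ is exponentially negligible compared with $p(N)$, then check that the lower bound in Theorem~\ref{theorem-1.4}(i) is attained asymptotically, and finally read off the growth rate.

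\emph{Negligibility of $p(N,t-1)$.} For fixed $t$ the condition $6\le t\le \frac{2\pi\sqrt{2N}}{\sqrt{(1+\epsilon)\log N}}$ holds for all large $N$; the boundary case $t=5$ can instead be handled through Theorem~\ref{corollary-1.3}, applied with $t-1=4$. By Theorem~\ref{corollary-1.3} (equivalently, the formula for $p(N,t-1)$ in part~(i) of Theorem~\ref{theorem-1.4}, where the factor $\exp(\rho_1\exp(-2\pi\sqrt{(24N+t-1)/(t(t-1))}))\to 1$), we have
\[
p(N,t-1)\ll_t N^{-3/4}\exp\left(\frac{2\pi}{\sqrt6}\sqrt{\frac{t-1}{t}\left(N+\frac{t-1}{24}\right)}\right).
\]
Comparing with the Hardy--Ramanujan formula \eqref{eq-hardy}, the exponents differ by $\frac{2\pi}{\sqrt6}\bigl(1-\sqrt{(t-1)/t}\bigr)\sqrt N+O(1)$, which tends to infinity. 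Hence $p(N,t-1)=o(p(N))$, and so $p(N)-p(N,t-1)\sim p(N)$.

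\emph{Asymptotic equality.} Theorem~\ref{theorem-1.4}(i) gives $Z_t(N)\ge c_t(N)p(N)(1+o(1))$. For the matching upper bound I would rely on the McSpirit--Ono argument: the zeros with $\lambda$ a $t$-core are essentially counted by pairs $(\lambda,\mu)$ where $\mu$ has a part divisible by $t$. This gives $Z_t(N)\le c_t(N)p(N)$, and combining the two bounds yields $Z_t(N)\sim c_t(N)p(N)$. This step is the delicate one. If the intended reading of the corollary is only the lower bound $\gg_t$, then part~(i) alone suffices.

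\emph{Growth rate.} For fixed $t$, the formula for $c_t(N)$ in Theorem~\ref{theorem-1.4}(i) gives $c_t(N)\asymp_t N^{(t-3)/2}$, since the $O(t^{-\epsilon})$ term is a bounded constant once $t$ is fixed and the prefactor is a positive constant depending only on $t$. Multiplying by $p(N)\asymp N^{-1}\exp\bigl(\frac{2\pi}{\sqrt6}\sqrt N\bigr)$ gives
\[
Z_t(N)\gg_t N^{\frac{t-3}{2}-1}\exp\left(\frac{2\pi}{\sqrt6}\sqrt N\right)=N^{\frac{t-5}{2}}\exp\left(\frac{2\pi}{\sqrt6}\sqrt N\right),
\]
as claimed.
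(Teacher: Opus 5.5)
Your route matches the paper's. You use the lower bound $Z_t(N)\ge c_t(N)(p(N)-p(N,t-1))$ from Theorem~\ref{theorem-1.4}(i), get $p(N,t-1)=o(p(N))$ from Hagis' formula, and then read off the growth of $c_t(N)p(N)$. However, the step you call delicate is trivial. $Z_t(N)$ counts a subset of the $c_t(N)p(N)$ pairs $(\lambda,\mu)$ with $\lambda$ a $t$-core, so $Z_t(N)\le c_t(N)p(N)$ needs no input at all. Your remark that zeros come essentially from $\mu$ having a part divisible by $t$ is what this sandwich proves, not a reason for the upper bound. The paper uses exactly this sandwich in the proof of the next corollary. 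It gives $Z_t(N)\sim c_t(N)p(N)$ outright, so there is no need to retreat to a lower-bound-only reading.

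The genuine gap is $t=5$. Theorem~\ref{theorem-1.4}(i) is stated only for $t\ge 6$, and that restriction covers its formula for $c_t(N)$ as well as the one for $p(N,t-1)$.
\begin{itemize}
\item Applying Theorem~\ref{corollary-1.3} with $t-1=4$ repairs $p(N,4)$.
\item The inequality $Z_5(N)\ge c_5(N)(p(N)-p(N,4))$ holds directly by Murnaghan--Nakayama, since a $5$-core has no hook of length divisible by $5$. So $Z_5(N)\sim c_5(N)p(N)$ survives.
\item Your growth step, however, has nothing to rest on, and $c_5(N)\asymp N$ is actually false.
\end{itemize}
One has $c_5(N)=\sum_{d\mid N+1}\left(\frac{d}{5}\right)\frac{N+1}{d}$, hence $c_5(N)/(N+1)=\prod_{p^k\parallel N+1}\sum_{j=0}^{k}\left(\frac{p}{5}\right)^{j}p^{-j}$. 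This ratio tends to $0$, like $(\log\log N)^{-1/2}$, when $N+1$ is the product of all primes $p\le x$ with $p\equiv\pm 2\pmod 5$. Combined with $Z_5(N)\le c_5(N)p(N)$, the bound $Z_5(N)\gg e^{\frac{2\pi}{\sqrt6}\sqrt N}$ fails along these $N$. So the $\gg_t$ part can only be proved for $t\ge 6$; the paper's one-line deduction from Theorem~\ref{theorem-1.4}(i) has the same blind spot.

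Even for $t\ge 6$, your justification that the $O(t^{-\epsilon})$ term is bounded is the wrong property. A lower bound needs $1+O(t^{-\epsilon})$ to stay away from $0$ for fixed $t$, which the displayed formula does not guarantee. That has to come from the fixed-$t$ asymptotics for $t$-cores, whose arithmetic factor is bounded between positive constants when $t\ge 6$.
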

	We can extend the range of $t$ in the above formula to
	$t \ll N^{\frac{1}{2}-\delta}$ for any $\delta>0$.
	
	\begin{corollary}
		Let $t\ll N^{\frac{1}{2}-\delta}$ be any positive integer, and $\delta>0$. Then
		\begin{equation*}
			Z_{t}(N)\sim c_{t}(N)p(N).   
		\end{equation*}
	\end{corollary}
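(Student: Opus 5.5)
The plan is to derive the corollary from part \hyperlink{thm1.4i}{$(i)$} of Theorem~\ref{theorem-1.4}, which gives
\[
Z_t(N)\ge c_t(N)\bigl(p(N)-p(N,t-1)\bigr),
\]
and to pair it with a matching upper bound. Two things have to be shown. First, $t\ll N^{\frac12-\delta}$ lies in the range allowed by part $(i)$. Second, $p(N,t-1)=o(p(N))$ in this range, so that $p(N)-p(N,t-1)\sim p(N)$. The range check is immediate: $N^{\frac12-\delta}=o\bigl(\sqrt{N/\log N}\bigr)$, so for large $N$ we have $t\le 2\pi\sqrt{2N}/\sqrt{(1+\epsilon)\log N}$. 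Small fixed $t$, say $t<6$, can be handled separately (or excluded, as in the McSpirit--Ono statement). If $t\to\infty$, the factor $(1+O(t^{-\epsilon}))$ in $c_t(N)$ is $1+o(1)$.

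For the ratio $p(N,t-1)/p(N)$, I would use the formula for $p(N,t-1)$ in part $(i)$ and \eqref{eq-hardy} for $p(N)$. The exponential parts differ by
\[
\frac{2\pi}{\sqrt6}\Bigl(\sqrt N-\sqrt{\tfrac{t-1}{t}\bigl(N+\tfrac{t-1}{24}\bigr)}\Bigr)\ \ge\ c\frac{\sqrt N}{t}\ \gg\ N^{\delta},
\]
since $1-\sqrt{1-1/t}\ge 1/(2t)$ and the correction $(t-1)/24$ contributes only $O(t/\sqrt N)=o(1)$. The prefactor $\sqrt{12}A_{t-1}(N)$ is polynomial in $N$ and $t$, as is $1/(4\sqrt3N)$. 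The factor $\exp(\rho_1\exp(\cdots))^{-1}$ lies between $e^{-2}$ and $1$. Hence
\[
\frac{p(N,t-1)}{p(N)}\ll N^{O(1)}\exp\bigl(-cN^{\delta}\bigr)\to0,
\]
which gives $Z_t(N)\ge (1-o(1))\,c_t(N)p(N)$.

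The upper bound $Z_t(N)\le c_t(N)p(N)$ is the main obstacle. I expect it follows from the definition: the number of pairs $(\lambda,\mu)$ with $\lambda$ a $t$-core is at most $c_t(N)$ times the number of classes $\mu$, which is $p(N)$. Here $c_t(N)$ must be read as the true $t$-core count, with the displayed formula in Theorem~\ref{theorem-1.4} as its asymptotic. If the corollary's $c_t(N)$ is that asymptotic expression rather than the exact count, I would invoke the cited results of \cite{tyler2024asymptotics, tyler2024tcore} to show that the exact count and the formula agree up to $1+o(1)$ uniformly for $t\ll N^{\frac12-\delta}$. Combining the two bounds gives $Z_t(N)\sim c_t(N)p(N)$.
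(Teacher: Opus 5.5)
Your proposal is correct and follows the paper's own proof. Both combine three ingredients: the lower bound $Z_t(N)\ge c_t(N)\bigl(p(N)-p(N,t-1)\bigr)$ from Theorem~\ref{theorem-1.4}$(i)$, the trivial upper bound $Z_t(N)\le c_t(N)p(N)$, and the estimate $p(N,t-1)/p(N)\ll N^{1/4}t^{-1/2}\exp\bigl(-c\sqrt{N}/t\bigr)\to 0$, obtained from Theorem~\ref{theorem-1.2}$(i)$ and \eqref{eq-hardy}.

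The only differences are minor. Your bound $1-\sqrt{1-1/t}\ge 1/(2t)$ is cruder than the paper's binomial expansion but is enough. You also explicitly check the $t$-range and flag small $t$, which the paper leaves implicit. The upper bound you call the main obstacle is immediate from the definition (at most $c_t(N)$ choices of $\lambda$ times $p(N)$ choices of $\mu$), and the paper simply asserts it.
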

	\begin{proof}
		From \hyperlink{thm1.2i}{$(i)$} of Theorem~\ref{theorem-1.2}, one can verify that  
		\begin{equation*}
			A_{t-1}(N)= O\left(\, t^{-\frac{1}{2}} N^{-\tfrac{3}{4}}\right).
		\end{equation*}
		From the bionomial expansion,
		\begin{align*}
			\sqrt{\frac{t-1}{t}\left(N+\frac{t-1}{24}\right)} &= \sqrt{N}-\left(\frac{\sqrt{N}}{2t}-\frac{(t-1)^{2}}{48t\sqrt{N}}\right)-\frac{\sqrt{N}}{8}\left(\frac{1}{t}-\frac{(t-1)^{2}}{24tN}\right)^{2}-\cdots .
		\end{align*}
		Taking $t \ll N^{\frac{1}{2}-\delta}$, we get
		\begin{align*}
			&\exp\left(\frac{2\pi}{\sqrt{6}}\sqrt{\frac{t-1}{t}\left(N+\frac{t-1}{24}\right)}\right)= \exp\left(\frac{2\pi}{\sqrt{6}}\sqrt{N}\right)\exp\left(-\frac{\pi\sqrt{N}}{t\sqrt{6}}\left(1+O\left(N^{-2\delta}\right)\right)\right)\\
			&\text{and}\quad \exp\left(\rho_1\exp\left(-2\pi\frac{\sqrt{24N+t-1}}{\sqrt{t(t-1)}}+O\left(N^{-\frac{1}{2}}\right)\right)\right)=1+O\left(\exp\left(-N^{\delta}\right)\right).
		\end{align*}
		Substituting the above estimates in \hyperlink{thm1.2i}{$(i)$} of Theorem~\ref{theorem-1.2}, we obtain 
		\begin{align*}
			p(N)-p(N,t-1)&=p(N)\left(1-\frac{p(N,t-1)}{p(N)}\right)\\
			&=p(N)\left(1+O\left(N^{\frac{1}{4}}t^{-\frac{1}{2}}\exp\left(-\frac{\pi\sqrt{N}}{t\sqrt{6}}\left(1+O\left(N^{-2\delta}\right)\right)\right)\right)\right).\\
		\end{align*}
		Therefore,
		\begin{equation*}
			c_{t}(N)\left(p(N)-p(N,t-1)\right)\le Z_{t}(N)\le c_{t}(N)p(N),
		\end{equation*}
		which implies 
		\begin{equation*}
			Z_{t}(N)\sim c_{t}(N)p(N).   
		\end{equation*}    
	\end{proof}
	\subsection{\texorpdfstring{Sketch of the proof of Theorem \ref{theorem-1.1}}{}}
	The proof starts by partitioning the range of the integration in \eqref{eq-(1.1)} into two distinct regions:
	\begin{align*}
		|x| < \frac{y}{3} \quad \text{and} \quad \frac{y}{3} \le |x| \le \frac{1}{2}.
	\end{align*}
	So, we can write equation (\ref{eq-(1.1)}) as
	\begin{equation}\label{eq-pNt}
		\begin{aligned}
			p(N,t)&=\int_{-y/3}^{y/3}\exp\left(-2\pi iz L\right)g_{t}(z)dx+\int_{\frac{y}{3}\le |x|\le \frac{1}{2}}\exp\left(-2\pi iz L\right)g_{t}(z)dx  \\
			&=\exp(2\pi Ly)g_{t}(iy)\int_{-y/3}^{y/3}\exp\left(-2\pi i Lx+2\pi i\frac{1}{2\pi i}\log\frac{g_{t}(z)}{g_{t}(iy)}\right)dx\\
			&+\exp(2\pi Ly)g_{t}(iy)\int_{\frac{y}{3}\le|x|\le \frac{1}{2}}\exp\left(-2\pi i Lx\right)\frac{g_{t}(z)}{g_{t}(iy)}dx.
		\end{aligned}
	\end{equation}
	The Taylor expansion of $\log g_t(z)$ around $x=0$ is given by
	\begin{align}\label{eq-tyler}
		\log \frac{g_{t}(z)}{g_{t}(iy)}&=\left(x\frac{d}{dz} \log g_{t}(z)+\frac{x^{2}}{2}\frac{d^{2}}{dz^{2}}\log g_{t}(z)+\cdots\right)_{z=iy}.
	\end{align}
	Using $g_t(z)$ from (\ref{eq-gtz}) and $\mu_k(z)$ from (\ref{demu(z)}), we obtain
	\begin{align*}
		\left(\frac{d}{dz}\right)^{k}\log g_{t}(z)=\frac{2\pi i}{(t+1)z^{k+1}}((t+1)\mu_{k}(z)-\mu_{k}((t+1)z)).
	\end{align*}
	From (\ref{eq-tyler}) and the above, we see that 
	\begin{align}\label{eq-taylor}
		\notag
		\frac{1}{2\pi i}\log \frac{g_{t}(z)}{g_{t}(iy)}&=x\frac{(t+1)\mu_{1}(iy)-\mu_{1}((t+1)iy)}{(t+1)(iy)^{2}}+\frac{x^{2}}{2!}\frac{(t+1)\mu_{2}(iy)-\mu_{2}((t+1)iy)}{(t+1)(iy)^{3}}+\cdots \\
		&=x\frac{(t+1)\mu_{1}(iy)-\mu_{1}((t+1)iy)}{(t+1)(iy)^{2}}+\frac{x^{2}}{2!}\frac{(t+1)\mu_{2}(iy)-\mu_{2}((t+1)iy)}{(t+1)(iy)^{3}}\\
		\notag
		&+\frac{x^{3}}{3!}\frac{(t+1)\mu_{3}(iy)-\mu_{3}((t+1)iy)}{(t+1)(iy)^{4}}+\frac{x^{4}}{4!}\frac{(t+1)\mu_{4}(z^{\prime})-\mu_{4}((t+1)z^{\prime})}{(t+1)(z^{\prime})^{5}},
	\end{align}
	for some $z^{\prime}=x^{\prime}+iy$ and $x^{\prime}$ is between $0$ and $x$.
	In Theorem \ref{theorem-main}, we demonstrate that the first integral of (\ref{eq-pNt}) provides the dominant contribution to the main term.  We solve the saddle point $y$ from the first order term of the Taylor series (\ref{eq-taylor}) by assuming $\frac{d}{dz}\log g_t(z)=2\pi iL$. The second order term gives the Gaussian integral contributing to the main term. The third and fourth-order terms are bounded by the second-order term and absorbed in the error.  
	
	In contrast, the second integral of (\ref{eq-pNt}) contributes only to the error term.	To control this error, we establish Proposition \ref{proposition-error}. This is the lengthiest part of our proof as the Taylor expansion of $g_t(z)$ is not valid in the region $y/3\leq x\leq 1/2$. In this region, we require `good' lower bound for $|\eta(z)|$ when $\Im(z)$ is small. This problem is solved by translating $z$ by $\text{SL}_{2}(\mathbb{Z})$ and then using the functional equation of $\eta(z)$.
	
	\section{Acknowledgments}
	J. Barman is deeply thankful to the University Grants Commission (UGC), India, for their invaluable support through the Fellowship Programme. K. Mahatab is supported by the DST INSPIRE Faculty Award Program (grant no. DST/INSPIRE/04/2019/002586) and ARG-MATRICS program (grant no. ANRF/ARGM/2025/002540/MTR).
	
	\section{Preliminaries}\label{sec-preli}
	In this section, we establish several auxiliary results that will be used in the proofs of our main results. The following proposition is used to tackle the integration of (\ref{eq-pNt}) in the region $\frac{y}{3}\le|x|\le\frac{1}{2}$. Since the Taylor expansion is not valid in this region, we instead derive suitable bounds for this contribution.
	\begin{proposition}\label{proposition-error}
		If $\frac{1}{y}\ge 1000$, then
		\begin{equation*}
			\int_{\frac{y}{3}\le |x|\le \frac{1}{2}}  \left|\frac{g_{t}(z)}{g_t(iy)}\right| dx<2\exp\left(-\frac{\pi}{150y}\right).
		\end{equation*}   
	\end{proposition}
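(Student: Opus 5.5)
The plan is to factor the integrand through the eta quotient and bound the two factors separately. By \eqref{eq-gtz},
\begin{equation*}
\left|\frac{g_t(z)}{g_t(iy)}\right| = \left|\frac{\eta((t+1)z)}{\eta((t+1)iy)}\right|\cdot\left|\frac{\eta(iy)}{\eta(z)}\right| .
\end{equation*}
The first factor should cost very little, since $\Im((t+1)z)=(t+1)y$ is comparatively large. The second factor should supply the saving, because $z$ is bounded away from the imaginary axis when $|x|\ge y/3$. I will bound $\sup_{y/3\le|x|\le 1/2}$ of the product by $\exp(-\pi/(150y))$. The interval has length less than $1$, so this gives the claimed $2\exp(-\pi/(150y))$ with room to spare.

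\emph{Step 1 (denominator eta).} To get a lower bound for $|\eta(z)|$, I follow the hint in the sketch of the proof. I pick $\gamma=\begin{pmatrix} a&b\\ c&d\end{pmatrix}\in \mathrm{SL}_2(\mathbb{Z})$ by a Farey dissection of $[-1/2,1/2]$ of order about $y^{-1/2}$, so that $\gamma z$ lands in (or near) the standard fundamental domain. Then I use $|\eta(\gamma z)|=|cz+d|^{1/2}|\eta(z)|$, together with the elementary bounds
\begin{equation*}
e^{-\pi Y/12}\prod_n(1-e^{-2\pi nY})\le|\eta(X+iY)|\le e^{-\pi Y/12}\prod_n(1+e^{-2\pi nY}),
\end{equation*}
and $\eta(iy)=y^{-1/2}\eta(i/y)$ for the value at $iy$. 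Writing $Y'=\Im\gamma z=y/|cz+d|^2$, this yields
\begin{equation*}
\left|\frac{\eta(iy)}{\eta(z)}\right|\ll \frac{|cz+d|^{1/2}}{y^{1/2}}\exp\Bigl(-\frac{\pi}{12y}+\frac{\pi}{12 Y'}\Bigr).
\end{equation*}
For the arc at $0$ ($c=1$, $d=0$), the condition $|x|\ge y/3$ gives $|z|^2\ge \tfrac{10}{9}y^2$, hence $1/Y'\le \tfrac{9}{10}\cdot\tfrac1y$. This is a saving of $\pi/(120y)$ in the exponent. For arcs at $b/c$ with $c\ge2$ we have $|cz+d|\gtrsim cy$ while $Y'\le 1/(c^2y)$, so the saving is at least of order $(1-1/c^2)\pi/(12y)$, which is much larger. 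The polynomial prefactor is absorbed because $1/y\ge1000$.

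\emph{Step 2 (numerator eta).} The trivial bound
\begin{equation*}
\left|\frac{\eta((t+1)z)}{\eta((t+1)iy)}\right|\le\prod_n\frac{1+s^n}{1-s^n},\qquad s=e^{-2\pi(t+1)y},
\end{equation*}
costs about $\exp\bigl(\pi/(8(t+1)y)\bigr)$. Even for $t\ge4$ this is $\exp(\pi/(40y))$, which exceeds the $\pi/(120y)$ saving from Step 1 on the arc at $0$. So on that arc I must treat the numerator more carefully. There I would also transform $(t+1)z$, by $w\mapsto -1/w$ or by the appropriate $\gamma$ for the cusp nearest $(t+1)x$. The two modular transformations then combine into the exponent
\begin{equation*}
\frac{\pi}{12}\Bigl(\Re\frac{i}{z}-\frac1y\Bigr)-\frac{\pi}{12}\Bigl(\Re\frac{i}{(t+1)z}-\frac1{(t+1)y}\Bigr)=-\frac{\pi}{12}\Bigl(1-\frac1{t+1}\Bigr)\Bigl(\frac1y-\frac{y}{x^2+y^2}\Bigr),
\end{equation*}
plus exponentially small corrections. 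For $|x|\ge y/3$ and $t\ge4$ this is at most $-\tfrac{\pi}{12}\cdot\tfrac45\cdot\tfrac1{10y}=-\pi/(150y)$, which explains the constant in the statement.

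The remaining ranges of $x$ are those where $(t+1)x$ is near a rational with larger denominator while $x$ itself is far from $0$. There I would use the crude numerator bound from the start of this step and rely on the large saving from the $c\ge2$ arcs of $\eta(z)$.

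The main obstacle is this bookkeeping: making the two Farey dissections (for $z$ and for $(t+1)z$) compatible, and checking in every case that the total exponent stays below $-\pi/(150y)$. I expect to need explicit numerical constants, using $y\le 10^{-3}$, to absorb the $\prod(1\pm e^{-2\pi n/Y})$ tails and the factors $|cz+d|^{1/2}$.
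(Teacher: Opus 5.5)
Your architecture is sound, and at cusp $0$ it coincides with the paper. The paper's set $m(1,0)$ in Subcase~2.2.2 is exactly your zone where both $z$ and $(t+1)z$ are moved by $w\mapsto -1/w$. It has the same exponent $-\frac{\pi t}{12(t+1)}\bigl(\frac1y-\Im(-1/z)\bigr)$, and the constant $1/150$ arises there in the same way.

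There are three minor slips. In Step~1 the exponent is $-\frac{\pi}{12y}+\frac{\pi}{12}Y'$, not $\frac{\pi}{12Y'}$. The arc-at-$0$ condition reads $Y'\le\frac{9}{10y}$. And at $t=4$, $|x|=y/3$ your exponent equals $-\pi/(150y)$ exactly, so the supremum is $(1+o(1))e^{-\pi/(150y)}$; the factor $2$ absorbs this.

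The real gap is in the case split. Your double-$S$ computation needs $\Im\bigl(-1/((t+1)z)\bigr)=\frac{y}{(t+1)|z|^2}$ bounded below, i.e.\ $|x|\lesssim\sqrt{y/(t+1)}$. The arc of $z$ at $0$, however, extends to $|x|\asymp\sqrt{y}$. In between, $z$ is still at cusp $0$, so the $c\ge 2$ saving is unavailable, yet your displayed exponent does not apply. Your own comparison of $\pi/(40y)$ with $\pi/(120y)$ even suggests the crude numerator bound fails there. Your last paragraph describes the leftover set as ``$x$ far from $0$'', which misses this range. And ``the appropriate $\gamma$ for the cusp nearest $(t+1)x$'' is never turned into an estimate.

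The missing observation is that in this range $y\,\Im(-1/z)$ is small, so the crude bound already suffices and no second Farey dissection is needed. Split the arc at $0$ according to whether $Y'=\Im(-1/z)\ge\frac{\sqrt3}{2}(t+1)$ (your double-$S$ zone) or not. In the second part $yY'<\min\bigl(\frac{\sqrt3}{2}(t+1)y,\frac{9}{10}\bigr)$, and $\prod_n\frac{1+s^n}{1-s^n}\le e^{\pi/(8(t+1)y)}$ gives the exponent
\[
-\frac{\pi}{12y}\Bigl(1-\min\bigl(\tfrac{\sqrt3}{2}(t+1)y,\tfrac{9}{10}\bigr)-\tfrac{3}{2(t+1)}\Bigr).
\]
Since $y\le 10^{-3}$, the minimum is tiny when $t$ is small, and $\frac{3}{2(t+1)}$ is tiny when $t$ is large. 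The bracket is at least $0.098>\frac{12}{150}$ for every $t\ge4$, with room for the prefactor $(yY')^{-1/4}\le(\sqrt3 y/2)^{-1/4}$. On arcs with $c\ge2$ the crude bound works whatever $\Im\gamma z$ is, because the saving is at least $\pi/(16y)$ against a cost of at most $\pi/(40y)$.

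Completed this way, your route is leaner than the paper's. For $(t+1)y<1$ the paper uses Tyler's bound $|\eta(w)|\le\frac79(\Im w)^{-1/4}$ wherever $\Im\gamma z<\frac{\sqrt3}{2}(t+1)$. Where $\Im\gamma z\ge\frac{\sqrt3}{2}(t+1)$, it transforms $(t+1)z$, for every reducing matrix $\gamma$ of $z$, by a matrix with bottom row $\bigl(c/(t+1,c),\,d(t+1)/(t+1,c)\bigr)$. It then splits on $(t+1,c)^2$ versus $t+1$ and sums over the sets $m(c,d)$. Your remark that only cusp $0$ needs a compatible transformation makes that gcd analysis unnecessary. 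The price is the weaker numerator cost $\pi/(8(t+1)y)$ in place of $\pi/(12(t+1)y)$, which the computation above shows you can afford.
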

	Note that the group $\text{SL}_{2}(\mathbb{Z})$ acts on the upper half–plane 
	$\mathbb{H}=\{\,z \mid \Im z>0\,\}$ through the Möbius 
	transformations, given by
	\begin{equation*}
		\gamma z=\frac{az+b}{cz+d} \quad \text{for}\quad \gamma= \begin{pmatrix}
			a & b\\
			c & d
		\end{pmatrix}\in \text{SL}_2(\mathbb{Z}).   
	\end{equation*}
	As shown in \cite{tyler2024asymptotics}, the Dedekind eta function (\ref{eq-eta}) obeys the following transformation identities
	\begin{equation}\label{eq-eta modular}
		\begin{aligned}
			\eta(z)&=\frac{1}{\sqrt{-iz}}\eta\left(\frac{-1}{z}\right)   \quad \text{and}\\
			|\eta(z)|&=\left(\frac{y}{\text{Im}\gamma z}\right)^{-\frac{1}{4}}|\eta(\gamma z)| \quad \text{for all }\quad \gamma\in \text{SL}_{2}(\mathbb{Z}).
		\end{aligned}
	\end{equation}
	Next, we state three lemmas that are needed to prove Proposition \ref{proposition-error}.
	The following lemma is used to bound $\eta(z)$.
	\begin{lemma}[{\cite[Lemma 2.2]{tyler2024asymptotics}}]\label{lemma-2.0}
		Suppose that $x\in\mathbb{R}$. For every $y \ge \frac{\sqrt{3}}{2}$, there exists a $\theta$ with $|\theta|<1$ such that
		\begin{align*}
			|\eta(z)|=\exp\left(-\frac{\pi y}{12}+\theta 1.01e^{-2\pi y}\right).
		\end{align*}
	\end{lemma}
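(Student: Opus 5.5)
The plan is to work directly from the product definition \eqref{eq-eta}. Write $q=e^{2\pi i z}$, so $|q|=e^{-2\pi y}$. Then
\begin{equation*}
\log\eta(z)=\frac{\pi i z}{12}+\sum_{n\ge1}\log(1-q^n).
\end{equation*}
Taking real parts gives
\begin{equation*}
\log|\eta(z)|=-\frac{\pi y}{12}+\sum_{n\ge1}\log|1-q^n|,
\end{equation*}
so the lemma reduces to showing
\begin{equation*}
\Bigl|\sum_{n\ge1}\log|1-q^n|\Bigr|<1.01\,e^{-2\pi y}
\end{equation*}
uniformly in $x$, whenever $y\ge \sqrt3/2$. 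Once this holds, we simply define $\theta$ as this sum divided by $1.01e^{-2\pi y}$.

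To bound the sum, I will use $\bigl|\log|1-w|\bigr|\le|\log(1-w)|\le \sum_{m\ge1}|w|^m/m=-\log(1-|w|)$ for $|w|<1$. Put $r=e^{-2\pi y}$. Since $y\ge\sqrt3/2$, we have $r\le e^{-\pi\sqrt3}\approx 0.00434$. Then
\begin{equation*}
\Bigl|\sum_{n\ge1}\log|1-q^n|\Bigr|\le\sum_{n\ge1}-\log(1-r^n).
\end{equation*}
The $n=1$ term satisfies $-\log(1-r)\le r/(1-r)$. The tail satisfies
\begin{equation*}
\sum_{n\ge2}-\log(1-r^n)\le \sum_{n\ge 2}\frac{r^n}{1-r^n}\le \frac{r^2}{(1-r)^2}.
\end{equation*}
So the whole sum is at most $r\bigl(\frac{1}{1-r}+\frac{r}{(1-r)^2}\bigr)$. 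With $r\le0.00435$ the bracket is about $1.0044+0.0044\approx1.0088<1.01$, which gives the strict inequality and hence $|\theta|<1$.

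The only delicate step is the numerical check that the constant $1.01$ suffices at the endpoint $y=\sqrt3/2$. The bracket is increasing in $r$, so it is enough to check it at the largest value $r=e^{-\pi\sqrt3}$, and that check is a routine numerical evaluation. If the crude tail bound were too lossy, I would instead keep the exact $\sum_{m}\sigma_{-1}(m)r^m$ expansion; the margin $1.0088$ versus $1.01$ already suffices.
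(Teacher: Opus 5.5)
Your proof is correct. The reduction to bounding $\bigl|\sum_{n\ge1}\log|1-q^n|\bigr|$ is valid, and so is the comparison with $\sum_{n\ge1}-\log(1-r^n)$. Your estimate also simplifies exactly: $r\bigl(\frac{1}{1-r}+\frac{r}{(1-r)^2}\bigr)=\frac{r}{(1-r)^2}$. At $r=e^{-\pi\sqrt3}\approx 0.004333$ this gives the constant $(1-e^{-\pi\sqrt3})^{-2}\approx 1.008723<1.01$, so $|\theta|<1$ uniformly in $x$.

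The paper gives no proof of this lemma; it simply quotes Tyler's Lemma 2.2. Your product-expansion argument is a standard self-contained verification. The constant it produces, $(1-e^{-\pi\sqrt3})^{-2}\approx 1.00873$, is exactly the constant that appears in Lemmas \ref{lemma-2.1} and \ref{lemma-2.2}.
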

	To estimate $\eta(iy)$ when $y$ is large, we apply the following lemma.
	\begin{lemma}[{\cite[Lemma 2.3]{barman2025lower}}]\label{lemma-2.1}
		For every $y\ge\frac{\sqrt{3}}{2}$, one can find a number $\nu$ satisfying $1<\nu<1.00873$ such that
		\begin{equation*}
			\eta(iy) = \exp\left(-\frac{\pi y}{12} - \nu e^{-2\pi y}\right).
		\end{equation*}
	\end{lemma}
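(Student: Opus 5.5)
We prove the statement by expanding $\log\eta(iy)$ as a power series in the real variable $q=e^{-2\pi y}$. We then sandwich the correction term between explicit bounds that are monotone in $q$, so the worst case is the endpoint $y=\sqrt{3}/2$.

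\emph{Setting up.} Put $q=e^{-2\pi y}$. For real $y\ge \sqrt{3}/2$ we have $0<q\le q_0:=e^{-\pi\sqrt{3}}\approx 0.00434$. From \eqref{eq-eta},
\begin{equation*}
\log \eta(iy) = -\frac{\pi y}{12} + \sum_{n\ge 1}\log(1-q^{n}),
\end{equation*}
and every quantity here is real. So it suffices to show that
\begin{equation*}
\nu:=-\frac{1}{q}\sum_{n\ge 1}\log(1-q^{n})
\end{equation*}
satisfies $1<\nu<1.00873$; the identity $\eta(iy)=\exp(-\pi y/12-\nu e^{-2\pi y})$ then holds by definition of $\nu$. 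Expanding each logarithm and collecting powers of $q$ gives
\begin{equation*}
-\sum_{n\ge1}\log(1-q^n)=\sum_{n\ge1}\sum_{m\ge1}\frac{q^{nm}}{m}=\sum_{M\ge 1}\sigma_{-1}(M)\,q^{M},
\end{equation*}
where $\sigma_{-1}(M)=\sum_{d\mid M}d^{-1}$. Hence
\begin{equation*}
\nu=\sum_{M\ge1}\sigma_{-1}(M)\,q^{M-1}.
\end{equation*}

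\emph{Lower bound.} All coefficients are positive and $\sigma_{-1}(1)=1$. The $M=2$ term $\tfrac32 q$ is strictly positive, so $\nu>1$.

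\emph{Upper bound.} Use the crude estimate $\sigma_{-1}(M)\le \sum_{d\mid M}1\le M$. This gives
\begin{equation*}
\nu\le \sum_{M\ge1}M q^{M-1}=\frac{1}{(1-q)^{2}},
\end{equation*}
which is increasing in $q$ and therefore maximized at $q=q_0$. Numerically $e^{-\pi\sqrt3}\approx 0.0043366$, so $(1-q_0)^{-2}\approx 1.00872<1.00873$. Moreover the inequality $\sigma_{-1}(M)\le M$ is strict for $M=2$, so $\nu<1.00873$ strictly.

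\emph{Main obstacle.} The only delicate point is the numerics at the endpoint. The constant $1.00873$ is essentially $(1-e^{-\pi\sqrt3})^{-2}$ rounded up, so I would verify $e^{-\pi\sqrt{3}}<0.004337$ with a rigorous enclosure, e.g.\ from $\pi\sqrt3>5.4413$. If that margin turned out too thin, the fallback is to keep the exact first terms $1+\tfrac32 q+\tfrac43 q^2$ and apply $\sigma_{-1}(M)\le M$ only for $M\ge 4$. That would give a visibly smaller bound, roughly $1.0065$.
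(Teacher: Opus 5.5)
Your proof is correct. The paper gives no proof of this lemma; it is quoted from \cite{barman2025lower}. Your argument is almost certainly the intended one: your series $\nu=\sum_{M}\sigma_{-1}(M)q^{M-1}$ is just \eqref{eq-logeta} evaluated at $z=iy$, and your bound $\sigma_{-1}(M)\le M$ is the estimate $\sigma(n)\le n^{2}$ that the paper uses elsewhere. Moreover, the stated constant is exactly $(1-e^{-\pi\sqrt3})^{-2}\approx 1.0087235$ rounded up.

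There is one numerical slip. $e^{-\pi\sqrt3}\approx 0.0043334$, not $0.0043366$; the latter is the critical value $1-1.00873^{-1/2}$ at which $(1-q)^{-2}$ reaches $1.00873$. So the check $e^{-\pi\sqrt3}<0.004337$ that you propose would not suffice, since $(1-0.004337)^{-2}\approx 1.0087308>1.00873$. The enclosure $\pi\sqrt3>5.4413$, which gives $e^{-\pi\sqrt3}<0.004334$, does suffice. Your fallback bound of about $1.0065$ removes the delicacy altogether.
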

	When $y$ is small and positive, we utilize the modular transformation property of $\eta(z)$, as described below.
	\begin{lemma}[{\cite[Lemma 2.4]{barman2025lower}}]\label{lemma-2.2}
		For any $0<y<1$, there exists a $\rho$ with $1<\rho<1.00873$ for which
		\begin{equation*}
			\eta(iy) = y^{-\frac{1}{2}} \exp\left( -\frac{\pi}{12y} - \rho e^{-\frac{2\pi}{y}} \right).
		\end{equation*}
	\end{lemma}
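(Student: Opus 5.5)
The plan is to reduce the small-$y$ regime to the large-imaginary-part regime via the modular transformation of $\eta$, and then quote Lemma \ref{lemma-2.1}. Concretely, I would take $z = iy$ with $0<y<1$ in the first identity of \eqref{eq-eta modular}, $\eta(z) = (-iz)^{-1/2}\eta(-1/z)$.

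Here $-iz = -i\cdot iy = y$, which is a positive real number. So the principal branch of the square root is unambiguous and gives $(-iz)^{-1/2} = y^{-1/2}$. Also $-1/z = -1/(iy) = i/y$, so
\begin{equation*}
\eta(iy) = y^{-\frac12}\,\eta\!\left(\frac{i}{y}\right).
\end{equation*}
From the product \eqref{eq-eta} it is clear that $\eta(iY)>0$ for every $Y>0$, since every factor is a positive real number. Hence both sides are positive reals and no phase issues arise.

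Next I set $Y = 1/y$. Because $0<y<1$, we get $Y>1>\frac{\sqrt3}{2}$, so Lemma \ref{lemma-2.1} applies at $Y$. It gives a number $\nu$ with $1<\nu<1.00873$ such that
\begin{equation*}
\eta\!\left(\frac{i}{y}\right) = \exp\!\left(-\frac{\pi}{12y} - \nu e^{-2\pi/y}\right).
\end{equation*}
Substituting this into the previous display and setting $\rho := \nu$ yields exactly the claimed formula, with $1<\rho<1.00873$.

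There is essentially no obstacle in this argument. The only points needing care are checking the branch of $\sqrt{-iz}$ at $z=iy$ and verifying that the hypothesis $y\ge\sqrt3/2$ of Lemma \ref{lemma-2.1} holds at $Y=1/y$; both are immediate. All the real content (the two-sided constant $\nu$) is already contained in Lemma \ref{lemma-2.1}.

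If one wanted to avoid relying on that lemma, I would expand $\log\eta(iY) = -\pi Y/12 + \sum_{n\ge1}\log(1-e^{-2\pi nY})$. The term $n=1$ together with the tail then gives $-e^{-2\pi Y}(1+\delta)$, with $0<\delta$ bounded by a geometric-series estimate at $Y=\sqrt3/2$. Getting the numerical constant $0.00873$ from that estimate would be the only computational step.
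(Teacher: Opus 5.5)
Your argument is correct and follows the route the paper intends: apply the functional equation at $z=iy$ to get $\eta(iy)=y^{-1/2}\eta(i/y)$, then invoke Lemma~\ref{lemma-2.1} at $1/y>1>\sqrt{3}/2$ and take $\rho=\nu$. The paper gives no proof of its own and imports the statement from \cite{barman2025lower}, but its remark that the modular transformation is used for small $y$ points to exactly this derivation.
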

	We now give the proof of Proposition \ref{proposition-error}, using the preceding results. Throughout the proof, we assume that $\frac{1}{y}\ge 1000$.
	
	\begin{proof}[\textbf{\boldmath Proof of Proposition \ref{proposition-error}}]
		We obtain the following from the definition of $g_t(z)$ given in \eqref{eq-gtz}:
		\begin{align}\label{eq-gtz1}
			\left|\frac{g_{t}(z)}{g_t(iy)}\right|=&\left|\frac{\eta((t+1)z)}{\eta((t+1)iy)}\right| \left|\frac{\eta(iy)}{\eta(z)}\right|.
		\end{align}
		The main objective of this proof is to show that the above term is bounded by a sufficiently small quantity. 
		Since $\text{SL}_2(\mathbb{Z})$ acts transitively on the upper half-plane modulo the standard fundamental domain, we may select 
		$\gamma=\begin{pmatrix} a & b \\ c & d \end{pmatrix}\in \text{SL}_2(\mathbb{Z})$ 
		such that $\operatorname{Im}\gamma z\ge \frac{\sqrt{3}}{2}$.
		Changing $\gamma$ by $-\gamma$, which does not change its action on $z$, that is, $-\gamma z=\gamma z$. So, we can take $c\ge0.$ If $c=0$, then $\Im \gamma z=\Im z<\frac{\sqrt{3}}{2}$, which is a contradiction. Hence, we must have $c>0$. Without loss of generality, we can assume $0\le d<c$.
		Since in this proposition we assume that $y \le \frac{1}{1000}$, it follows from Lemma \ref{lemma-2.2} that
		\begin{align*}
			&\eta(iy)\le y^{-\frac{1}{2}}\exp\left(-\frac{\pi}{12y}+e^{-2000\pi}\right).  
		\end{align*}
		Since we assume that $\Im \gamma z \ge \frac{\sqrt{3}}{2}$, we can use equation \eqref{eq-eta modular} and Lemma \ref{lemma-2.0} to obtain
		\begin{align*}
			&|\eta(z)|=\left(\frac{y}{\text{Im}\gamma z}\right)^{-\frac{1}{4}}|\eta(\gamma z)|\ge\left(\frac{y}{\text{Im}\gamma z}\right)^{-\frac{1}{4}}\exp\left(-\frac{\pi}{12}\text{Im}\gamma z-1.01e^{-\sqrt{3}\pi}\right).   
		\end{align*}
		Combining the above arguments, we get
		\begin{equation}\label{eq-P2}
			\left|\frac{\eta(iy)}{\eta(z)}\right|<(y\text{Im}\gamma z)^{-\frac{1}{4}}\exp\left(-\frac{\pi}{12}\left(\frac{1}{y}-\text{Im}\gamma z\right)+\frac{1}{228}\right).  
		\end{equation}
		We now bound $\left|\frac{\eta((t+1)z)}{\eta((t+1)iy)}\right|$ by considering two cases, according to whether $(t+1)y \ge 1$ or $(t+1)y < 1$.
		\begin{case}\label{claim-3.11}
			\textbf{\boldmath $(t+1)y\ge 1$.} 
		\end{case}
		\begin{proof}[Proof of Case \ref{claim-3.11}]
			For $(t+1)y \ge 1>\frac{\sqrt{3}}{2}$, we use Lemma \ref{lemma-2.0} with $|\theta_j|<1$ for $j=1,2$:
			\begin{align}\label{eq-P3}
				\notag
				\left|\frac{\eta((t+1)z)}{\eta((t+1)iy)}\right|&=\exp(\theta_{1}1.01e^{-2\pi (t+1)y}-\theta_{2}1.01e^{-2\pi (t+1)y}) \\
				&\le\exp(2.02e^{-2\pi}).
			\end{align}
			Using (\ref{eq-P2}) and (\ref{eq-P3}) in (\ref{eq-gtz1}), we obtain
			\begin{align*}
				\left|\frac{g_{t}(z)}{g_t(iy)}\right|
				\le & (y\text{Im}\gamma z)^{-\frac{1}{4}}\exp\left(-\frac{\pi}{12y}+\frac{\pi}{12y}y\text{Im}\gamma z+\frac{1}{228}+2.02e^{-2\pi}\right). 
			\end{align*}
			Observe that $y\text{Im}\gamma z=\frac{y^{2}}{(cx+d)^{2}+c^{2}y^{2}}$. 
			When $c>1$, we have $y\text{Im}\gamma z=\frac{y^{2}}{(cx+d)^{2}+c^{2}y^{2}}<\frac{y^{2}}{c^{2}y^{2}}\le \frac{1}{4}$. 
			Otherwise, $c=1$ and $d=0$, in which case $y\text{Im}\gamma z=\frac{y^{2}}{x^{2}+y^{2}}\le \frac{9}{10}$, since $\frac{y}{3}\le |x|$. 
			Consider the function $v \longmapsto v^{-\frac{1}{4}}\exp\left(\frac{\pi v}{12y}\right)$, where $v=y\text{Im}\gamma z\in \left[\frac{\sqrt{3}y}{2}, \frac{9}{10}\right]$, because we have assumed that $\text{Im}\gamma z\ge \frac{\sqrt{3}}{2}$. The function is increasing in $v=y\Im \gamma z$, so the maximum is attained at the right endpoint. Hence,
			\begin{align*}
				\notag
				\left|\frac{g_{t}(z)}{g_t(iy)}\right| \le & \left(\frac{9}{10}\right)^{-\frac{1}{4}}\exp\left(-\frac{\pi}{12y}\left(1-\frac{9}{10}\right)+\frac{1}{228}+2.02e^{-2\pi}\right)\\
				< & 2 \exp\left(-\frac{\pi}{150y}\right).
			\end{align*}
			It follows from the above that
			\begin{equation*}
				\int_{\frac{y}{3}\le |x|\le \frac{1}{2}}  \left|\frac{g_{t}(z)}{g_t(iy)}\right| dx<2\exp\left(-\frac{\pi}{150y}\right).
			\end{equation*} 
		\end{proof}
		\begin{case}\label{claim-3.12}
			\textbf{\boldmath $(t+1)y < 1$.}
		\end{case}
		\begin{proof}[Proof of Case \ref{claim-3.12}]
			We apply Lemma \ref{lemma-2.2} and put an upper bound for $\rho$ to get
			\begin{align}\label{eq-C21}
				\eta((t+1)iy) \ge((t+1)y)^{-\frac{1}{2}}\exp\left(-\frac{\pi}{12(t+1)y}-1.01e^{-2\pi}\right).    
			\end{align}
			Furthermore, using the formula for $\frac{g_t(z)}{g_t(iy)}$ from \eqref{eq-gtz1}, the upper bound for $\frac{\eta(iy)}{\eta(z)}$ from \eqref{eq-P2}, and the lower bound for $\eta((t+1)iy)$ from \eqref{eq-C21}, we obtain
			\begin{align}\label{eq-C22}
				\left|\frac{g_{t}(z)}{g_t(iy)}\right|\le & |\eta((t+1)z)|(((t+1)y))^{\frac{1}{2}}(y\text{Im}\gamma z)^{-\frac{1}{4}}\exp\left(-\frac{\pi t}{12(t+1)y}+\frac{\pi}{12}\text{Im}\gamma z +\frac{1}{159}\right).
			\end{align}
			Since our goal is to show that $\frac{g_t(z)}{g_t(iy)}$ is small, it suffices to prove that either the term inside the exponent in (\ref{eq-C22}) is negative or that $\eta((t+1)z)$ is sufficiently small. To achieve this, we now divide the argument into two further cases, depending on whether $\text{Im}\gamma z< \frac{\sqrt{3}}{2}(t+1)$ or $\text{Im}\gamma z\ge \frac{\sqrt{3}}{2}(t+1)$. \newline
			\textbf{Subcase 2.1.} \textbf{\boldmath $\text{Im}\gamma z< \frac{\sqrt{3}}{2}(t+1)$.} 
			\newline
			We may apply Lemma 2.3 of \cite{tyler2024asymptotics} to bound $\eta((t+1)z)$ by $\frac{7}{9}((t+1)y)^{-\frac{1}{4}}$ and from (\ref{eq-C22}), we obtain
			\begin{align}\label{eq-C221}
				\left|\frac{g_{t}(z)}{g_t(iy)}\right|\le \frac{7}{9}(t+1)^{\frac{1}{4}}(\text{Im}\gamma z)^{-\frac{1}{4}}\exp\left(-\frac{\pi t}{12(t+1)y}+\frac{\pi}{12}\text{Im}\gamma z +\frac{1}{159}\right).
			\end{align}
			If we set $u=\text{Im}\gamma z$, then the quantity $u^{-\frac{1}{4}}\exp\left(\frac{\pi u}{12}\right)$ reaches its maximum over the interval $\frac{\sqrt{3}}{2} \le u < \frac{\sqrt{3}}{2}(t+1)$ at $u=\frac{\sqrt{3}}{2}(t+1)$, where its value is
			\begin{align*}
				u^{-\frac{1}{4}}\exp\left(\frac{\pi u}{12}\right)\le \left(\frac{2}{\sqrt{3}(t+1)}\right)^{\frac{1}{4}}\exp\left(\frac{\sqrt{3}\pi}{24}(t+1)\right).    
			\end{align*}
			Since $(t+1)y<1$, for $t\ge 29$, it follows from \eqref{eq-C221} and the preceding argument that
			\begin{align}\label{eq-C222}
				\left|\frac{g_{t}(z)}{g_t(iy)}\right|&\le \exp\left(-\frac{\pi t}{12(t+1)y}+\frac{\sqrt{3}\pi}{24y}\right)\le \exp\left(-\frac{\pi}{150y}\right).
			\end{align}
			For $t<29$, the same bound follows from $\exp\left(\frac{\sqrt{3}\pi}{24}(t+1)\right)\le \exp\left(\frac{29\sqrt{3}\pi}{24}\right)$ and $y\le \frac{1}{1000}$.
			\newline
			\textbf{Subcase 2.2.} \textbf{\boldmath $\operatorname{Im}\gamma z \ge \frac{\sqrt{3}}{2}(t+1)$.} 
			\newline
			We denote the greatest common divisor $\gcd(a,b)$ by $(a,b)$. Since $\gamma\in \text{SL}_2(\mathbb{Z})$ implies $(c,d)=1$, it follows that for any positive integer $t$, $\frac{c}{(t+1,c)}$ and $\frac{d(t+1)}{(t+1,c)}$ are coprime. 
			So, $\gamma_{1}=
			\begin{pmatrix}
				\bullet & \bullet\\
				c/(t+1,c) & d(t+1)/(t+1,c)
			\end{pmatrix}\in \text{SL}_2(\mathbb{Z})$ exist. Then from (\ref{eq-eta modular}), 
			\begin{equation}\label{eq-SC21}
				|\eta((t+1)z)|=\left(\frac{(t+1)y}{\text{Im}\gamma_1(t+1) z}\right)^{-\frac{1}{4}} |\eta(\gamma_{1}(t+1)z)| 
			\end{equation}
			and
			\begin{align*}
				\text{Im}\gamma_1(t+1)z=\frac{(t+1)y}{\left|\frac{c(t+1)z}{(t+1,c)}+\frac{d(t+1)}{(t+1,c)}\right|^{2}} =\frac{(t+1,c)^{2}}{t+1}\text{Im}\gamma z\ge \frac{\sqrt{3}}{2} .
			\end{align*}
			Therefore, we may apply the Lemma \ref{lemma-2.0} to $\eta(\gamma_{1}(t+1)z)$. 
			Substituting the above estimate in (\ref{eq-SC21}) and using Lemma \ref{lemma-2.0}, we deduce
			\begin{align*}
				|\eta((t+1)z)| \le \sqrt{\frac{(t+1,c)}{(t+1)}}\left(\frac{y}{\text{Im}\gamma z}\right)^{-\frac{1}{4}}\exp\left(-\frac{\pi(t+1,c)^{2}}{12(t+1)}\text{Im}\gamma z+1.01e^{-\sqrt{3}\pi}\right).
			\end{align*}
			So, using the above bound for $\eta((t+1)z)$ in (\ref{eq-C22}), we obtain the following
			\begin{align}\label{eq-SC24}
				\left|\frac{g_{t}(z)}{g_t(iy)}\right|\le ((t+1,c))^{\frac{1}{2}}\exp\left(-\frac{\pi}{12}\left(\frac{(t+1,c)^{2}}{(t+1)}-1\right)\text{Im}\gamma z+\frac{1}{90}\right)\exp\left(-\frac{\pi t}{12(t+1)y}\right).  
			\end{align}
			As mentioned earlier, our objective is to show that $\frac{g_t(z)}{g_t(iy)}$ is small. We therefore consider two possibilities according to whether $(t+1,c)^2 > (t+1)$ or $(t+1,c)^2 \le (t+1)$. 
			We refer to case $(t+1,c)^2 > (t+1)$ as Subcase~2.2.1, where the desired bound follows easily. We treat case $(t+1,c)^2 \le (t+1)$ as Subcase~2.2.2. This case is more delicate, as the dominant exponential term 
			in \eqref{eq-SC24} cannot be identified directly.
			To handle this, we introduce suitable notation and decompose the integral into smaller parts in the region $\Im \gamma z \ge \frac{\sqrt{3}}{2}(t+1)$.
			\newline
			\noindent\textbf{Subcase 2.2.1.} \textbf{\boldmath $(t+1,c)^{2} > (t+1)$.} 
			\newline
			The right hand side of (\ref{eq-SC24}) is maximized at $(t+1,c)^2=(t+2)$. Hence,
			\begin{align*}
				\left|\frac{g_{t}(z)}{g_t(iy)}\right|&\le (t+2)^{\frac{1}{2}}\exp\left(-\frac{\pi}{12}\left(\frac{t+2}{(t+1)}-1\right)\text{Im}\gamma z+\frac{1}{90}\right)\exp\left(-\frac{\pi t}{12(t+1)y}\right).
			\end{align*}
			Moreover, in the range $\text{Im}\gamma z\ge \frac{\sqrt{3}}{2}(t+1),$ the right-hand side reaches its maximum at  $\text{Im}\gamma z= \frac{\sqrt{3}}{2}(t+1)$. Therefore,
			\begin{align}\label{eq-SC211}
				\notag
				\left|\frac{g_{t}(z)}{g_t(iy)}\right|  &\le \left(t+2\right)^{\frac{1}{2}}\exp\left(-\frac{\sqrt{3}\pi }{24}+\frac{1}{90}\right)\exp\left(-\frac{\pi t}{12(t+1)y}\right)\\
				&\le \left(t+2\right)^{\frac{1}{2}}\exp\left(-\frac{\pi t}{12(t+1)y}\right).
			\end{align}
			\noindent\textbf{Subcase 2.2.2.} \textbf{\boldmath $(t+1,c)^{2} \le (t+1)$.} 
			\newline
			Note,
			\begin{align*}
				((t+1,c))^{\frac{1}{2}}\exp\left(\frac{\pi}{12}\left(1-\frac{(t+1,c)^{2}}{(t+1)}\right)\text{Im}\gamma z+\frac{1}{90}\right)\le 1.02\exp\left(\frac{\pi t}{12(t+1)}\text{Im}\gamma z\right),   
			\end{align*}
			as the maximum is attained at $(t+1,c)=1$. From (\ref{eq-SC24}), we simplify
			\begin{align}\label{eq-SSC21}
				\left|\frac{g_{t}(z)}{g_t(iy)}\right|\le 1.02 \exp\left(\frac{\pi t}{12(t+1)}\text{Im}\gamma z\right)\exp\left(-\frac{\pi t}{12(t+1)y}\right).  
			\end{align}
			Since we only know that $\Im \gamma z \ge \frac{\sqrt{3}}{2}(t+1)$, this condition alone does not determine which of the two terms, 
			$\exp\left(\frac{\pi t}{12(t+1)} \Im \gamma z\right)$ or 
			$\exp\left(-\frac{\pi t}{12(t+1)y}\right)$ in \eqref{eq-SSC21}, is dominant. To address this difficulty, we introduce the following notation, which helps us to break the integral into smaller parts where $\Im\gamma z \ge \frac{\sqrt{3}}{2}(t+1)$. Let 
			\begin{align}\label{eq-SSC22}
				m(c,d)=\left\{x\,|\,\text{Im}\gamma z\ge\frac{\sqrt{3}(t+1)}{2}\right\}=\left\{x\,|\left|\frac{x+\frac{d}{c}}{y}\right|<\sqrt{\frac{2}{\sqrt{3}c^{2}(t+1)y}-1}\right\}
			\end{align}
			denote the portions of the interval into which we break the integration for ease of calculation.
			\newline
			Now we estimate the integral $\int_{\frac{y}{3}\le|x|\le \frac{1}{2}}\left|\frac{g_t(z)}{g_t(iy)}\right|dx$ corresponding to Case \ref{claim-3.12}. If $\Im\gamma z< \frac{\sqrt{3}}{2}(t+1)$, we apply the bound from \eqref{eq-C222}. If $\Im\gamma z\ge \frac{\sqrt{3}}{2}(t+1)$ and $(t+1,c)^{2} \le (t+1)$, we use the bound of \eqref{eq-SSC21}. Among the bounds in \eqref{eq-SC211} and \eqref{eq-SSC21}, we take the larger one as our final estimate. If $\Im\gamma z\ge \frac{\sqrt{3}}{2}(t+1)$ and $(t+1,c)^{2} \le (t+1)$, we break the integral into smaller regions, $m(c,d)$, defined in (\ref{eq-SSC22}). Therefore,
			\begin{align}\label{eq-SSC23}
				\notag
				\int_{\frac{y}{3}\le|x|\le \frac{1}{2}}\left|\frac{g_{t}(z)}{g_t(iy)}\right|dx&\le \int_{\Im \gamma z\ge \frac{\sqrt{3}}{2}(t+1)}\left|\frac{g_{t}(z)}{g_t(iy)}\right|dx+\int_{\Im \gamma z< \frac{\sqrt{3}}{2}(t+1)}\left|\frac{g_{t}(z)}{g_t(iy)}\right|dx\\
				&\le \int_{\substack{m(1,0)\\|x|>\frac{y}{3}}}\left|\frac{g_{t}(z)}{g_t(iy)}\right|dx+\sum_{c=2}^{\infty}\sum_{\substack{d=0\\(c,d)=1}}^{c-1}\int_{m(c,d)}\left|\frac{g_{t}(z)}{g_t(iy)}\right|dx+\exp\left(-\frac{\pi}{150y}\right).
			\end{align}
			We now simplify $\Im\gamma z$ for $c = 1$ and $d = 0$. 
			From \eqref{eq-SSC21}, we obtain
			\begin{align}
				\notag
				\int_{\substack{m(1,0)\\|x|>\frac{y}{3}}}\left|\frac{g_{t}(z)}{g_t(iy)}\right|dx &\le 1.02  \exp\left(-\frac{\pi t}{12(t+1)y}\right) \int_{\substack{m(1,0)\\|x|>\frac{y}{3}}}\exp\left(\frac{\pi t}{12(t+1)}\text{Im}\gamma z\right)\\
				\notag
				&=1.02\exp\left(-\frac{\pi t}{12(t+1)y}\right) \int_{\substack{m(1,0)\\|x|>\frac{y}{3}}}\exp\left(\frac{\pi t}{12(t+1)y}\frac{y^{2}}{x^{2}+y^{2}}\right)dx.
			\end{align}
			Let $v = 1 + \left( \frac{x}{y} \right)^{2}$, then
			$\frac{10}{9} \le v \le \frac{2}{\sqrt{3}(t+1)y}$, as $\tfrac{y}{3} < |x|$ and $\operatorname{Im}\gamma z \ge \tfrac{\sqrt{3}(t+1)}{2}$. We deduce from the above that
			\begin{align}\label{eq-SSC24}
				\notag
				\int_{\substack{m(1,0)\\|x|>\frac{y}{3}}}\left|\frac{g_{t}(z)}{g_t(iy)}\right|dx&\le \frac{y}{2}1.02\exp\left(-\frac{\pi t}{12(t+1)y}\right) \int_{\frac{10}{9}}^{\frac{2}{\sqrt{3}(t+1)y}}\exp\left(\frac{\pi t}{12(t+1)y}\frac{1}{v}\right)\frac{dv}{\sqrt{v-1}}\\
				\notag
				&\le 2y\exp\left(-\frac{\pi t}{12(t+1)y}\right) \int_{\frac{10}{9}}^{\frac{2}{\sqrt{3}(t+1)y}}\exp\left(\frac{\pi t}{12(t+1)y}\frac{1}{v}\right)dv\\
				&\le  \frac{4}{\sqrt{3}(t+1)}\exp\left(-\frac{\pi t}{120(t+1)y}\right),\,\left(\text{as the integrand is maximum at $v=\frac{10}{9}$}\right). 
			\end{align}
			Similarly,
			\begin{align*}
				\notag
				\int_{m(c,d)}\left|\frac{g_{t}(z)}{g_t(iy)}\right|dx &\le   1.02\exp\left(-\frac{\pi t}{12(t+1)y}\right) \int_{m(c,d)}\exp\left(\frac{\pi t}{12(t+1)}\text{Im}\gamma z\right)dx\\
				\notag
				&=1.02\exp\left(-\frac{\pi t}{12(t+1)y}\right) \int_{m(c,d)}\exp\left(\frac{\pi t}{12(t+1)y}\frac{y^{2}}{(cx+d)^{2}+c^{2}y^{2}}\right)dx .
			\end{align*}
			Letting $u=\frac{x+\frac{d}{c}}{y}$ and $A=\sqrt{\frac{2}{\sqrt{3}(t+1)c^{2}y}-1}$ and then using the above equation, we obtain 
			\begin{align*}
				\notag
				\int_{m(c,d)}\left|\frac{g_{t}(z)}{g_t(iy)}\right|dx&= 1.02y\exp\left(-\frac{\pi t}{12(t+1)y}\right) \int_{|u|<A}\exp\left(\frac{\pi t}{12c^{2}(t+1)y}\frac{1}{u^{2}+1}\right)du\\
				\notag
				&\le1.02 y\exp\left(-\frac{\pi t}{12(t+1)y}\right)\exp\left(\frac{\pi t}{12c^{2}(t+1)y}\right) \int_{|u|<A}du\\
				&\le 2.04 \sqrt{\frac{2y}{\sqrt{3}c^{2}(t+1)}} \exp\left(-\frac{3\pi t}{48(t+1)y}\right).
			\end{align*}
			As $A$ need to be positive, $\frac{2}{\sqrt{3}(t+1)c^{2}y}>1$. This condition forces $c<\sqrt{\frac{2}{\sqrt{3}(t+1)y}}=M$. Therefore,
			\begin{align}\label{eq-SSC25}
				\notag
				\sum_{c=2}^{M}\sum_{\substack{d=0\\(c,d)=1}}^{c-1}\int_{m(c,d)}\left|\frac{g_{t}(z)}{g_t(iy)}\right|dx &\le  2.04 \sqrt{\frac{2y}{\sqrt{3}(t+1)}} \exp\left(-\frac{3\pi t}{48(t+1)y}\right) \sum_{c=2}^{M}\frac{1}{c} \sum_{\substack{d=0\\(c,d)=1}}^{c-1}1\\
				\notag
				&\le  2.04 \sqrt{\frac{2y}{\sqrt{3}(t+1)}} \exp\left(-\frac{3\pi t}{48(t+1)y}\right) \sum_{c=2}^{M}\frac{\phi(c)}{c}\\
				\notag
				&<  2.04 \sqrt{\frac{2y}{\sqrt{3}(t+1)}} \exp\left(-\frac{3\pi t}{48(t+1)y}\right)M\quad\left(\text{as $\frac{\phi(c)}{c}<1$}\right)\\
				&=   \frac{4.08}{\sqrt{3}(t+1)} \exp\left(-\frac{3\pi t}{48(t+1)y}\right).
			\end{align}
			\newline
			Substituting the expressions from \eqref{eq-SSC24} and \eqref{eq-SSC25} in \eqref{eq-SSC23}, we deduce
			\begin{align*}
				\int_{\frac{y}{3}\le|x|\le \frac{1}{2}}\left|\frac{g_{t}(z)}{g_t(iy)}\right| &\le  \frac{4}{\sqrt{3}(t+1)}\exp\left(-\frac{\pi t}{120(t+1)y}\right)+\frac{4.08}{\sqrt{3}(t+1)} \exp\left(-\frac{3\pi t}{48(t+1)y}\right)+\exp\left(-\frac{\pi}{150y}\right)\\
				&<2\exp\left(-\frac{\pi}{150y}\right).
			\end{align*}
		\end{proof}
		This completes the proofs of Case \ref{claim-3.11} and Case \ref{claim-3.12}, and hence the proof of Proposition \ref{proposition-error}.      
	\end{proof}
	In \cite{tyler2024asymptotics}, Tyler derives following two different expansions for $\mu_k(z)$ (see (3.14) and (3.16)), which are useful depending on whether $\Im z$ is large\footnote{Throughout the article, we refer to the imaginary part as \emph{large} when it lies in $[1,\infty)$, and as \emph{small} when it lies in $(0,1)$.} or small.
	\begin{proposition}\label{eq-(2.1)}
		For a large imaginary part of $z$, we have the following formula:
		\begin{align*}
			\notag
			\mu_{k}(z)&=\sum_{n=1}^{\infty}z^{k+1}(2\pi in)^{k-1}\sigma(n)\exp(2\pi inz)- \begin{cases} 
				\frac{z^{2}}{24} & \mbox{if } k=0,1 \\ 
				0 & \mbox{if } k\ge 2 
			\end{cases} \quad\text{and}\\  
			\mu_{k}^{\prime}(z)&=\sum_{n=1}^{\infty}\left((2\pi inz)^{k+1}+(k+1)(2\pi inz)^{k}\right) \frac{\sigma(n)}{2\pi in}\exp(2\pi inz)-\begin{cases} 
				\frac{z}{12} & \mbox{if } k=0,1 \\ 
				0 & \mbox{if } k\ge 2. 
			\end{cases}\\ 
			\notag   
		\end{align*}
	\end{proposition}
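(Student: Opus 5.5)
The plan is to prove both identities by direct computation from the product formula \eqref{eq-eta}. They will in fact hold for every $z\in\mathbb{H}$; the phrase ``large imaginary part'' only indicates where these expansions are numerically useful, since their tails are then exponentially small.

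\textbf{Step 1: expand $\log\eta$.} First I take the logarithm of \eqref{eq-eta} and write $q=e^{2\pi i z}$, so that
\begin{equation*}
\log\eta(z)=\frac{\pi i z}{12}+\sum_{m\ge1}\log(1-q^m)=\frac{\pi i z}{12}-\sum_{m\ge1}\sum_{j\ge1}\frac{q^{mj}}{j}.
\end{equation*}
Next I group the terms by $n=mj$. The coefficient of $q^n$ is then $-\sum_{j\mid n}1/j=-\sigma(n)/n$, which gives
\begin{equation*}
\log\eta(z)=\frac{\pi i z}{12}-\sum_{n\ge1}\frac{\sigma(n)}{n}e^{2\pi i n z}.
\end{equation*}
The rearrangement is legitimate by absolute convergence for $|q|<1$.

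\textbf{Step 2: differentiate termwise.} Because $\sigma(n)\le n^2$, every series $\sum_n \sigma(n)n^{k-1}e^{2\pi i n z}$ converges uniformly on compact subsets of $\mathbb{H}$, so termwise differentiation is allowed. For $k\ge1$ this gives
\begin{equation*}
\Bigl(\frac{d}{dz}\Bigr)^k\log\eta(z)=\frac{\pi i}{12}\,\delta_{k,1}-\sum_{n\ge1}\frac{\sigma(n)}{n}(2\pi i n)^k e^{2\pi i n z}.
\end{equation*}

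\textbf{Step 3: the formula for $\mu_k$.} Multiplying by $-z^{k+1}/(2\pi i)$, as in \eqref{demu(z)}, the constant term becomes $-\frac{z^2}{24}$ when $k=1$ and vanishes when $k\ge 2$. The $n$-th term of the series becomes
\begin{equation*}
\frac{z^{k+1}}{2\pi i}\cdot\frac{\sigma(n)}{n}(2\pi i n)^k e^{2\pi i n z}=z^{k+1}(2\pi i n)^{k-1}\sigma(n)e^{2\pi i n z}.
\end{equation*}
This is exactly the first formula. The case $k=0$ is included only as a notational convention, and the same constant $-z^2/24$ appears there as well.

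\textbf{Step 4: the formula for $\mu_k'$.} I differentiate the series for $\mu_k$ termwise, which is again justified by local uniform convergence. The product rule applied to $z^{k+1}e^{2\pi i n z}$ gives
\begin{equation*}
(2\pi i n)^{k-1}\sigma(n)\bigl((k+1)z^k+2\pi i n\,z^{k+1}\bigr)e^{2\pi i n z}
=\bigl((2\pi i n z)^{k+1}+(k+1)(2\pi i n z)^k\bigr)\frac{\sigma(n)}{2\pi i n}e^{2\pi i n z},
\end{equation*}
which matches the stated summand. The constant term $-z^2/24$ differentiates to $-z/12$ when $k\le1$ and contributes nothing when $k\ge2$. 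Together these give the second formula.

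\textbf{Main obstacle.} The only step needing care is justifying the rearrangement and the termwise differentiation. Both follow from the Weierstrass M-test on regions $\Im z\ge\delta>0$, using $\sigma(n)\le n^2$. The rest is algebraic bookkeeping of the powers of $2\pi i n$ and $z$.
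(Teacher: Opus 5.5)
Your proposal is correct and follows essentially the same route as the paper: expand $\log\eta(z)=\frac{\pi i z}{12}-\sum_{n\ge1}\frac{\sigma(n)}{n}e^{2\pi i n z}$, differentiate $k$ times, and multiply by $-z^{k+1}/(2\pi i)$. Beyond what the paper writes, you derive that expansion from the product formula, justify the rearrangement and termwise differentiation, and verify the $\mu_k'$ formula by differentiating the series, which the paper's proof leaves unstated.
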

	\begin{proposition}\label{eq-(2.12)}
		When the imaginary part of $z$ is small, we have the following formula:
		\begin{align*}
			\mu_{k}(z)&=\sum_{n=1}^{\infty}P_{k}\left(\frac{2\pi in}{z}\right)\sigma(n)\exp\left(-\frac{2\pi in}{z}\right)+\frac{(-1)^{k}k!}{24}+\frac{z}{4\pi i}\begin{cases} 
				\log(-iz) & \text{if } k=0 \\ 
				(-1)^{k-1}(k-1)! & \mbox{if } k\ge 1 
			\end{cases}\\
			\notag
			\text{and}\quad \mu_{k}^{\prime}(z)&=\sum_{n=1}^{\infty}Q_{k}\left(\frac{2\pi in}{z}\right)\frac{\sigma(n)}{2\pi in}\exp\left(-\frac{2\pi in}{z}\right)+\frac{1}{4\pi i}\begin{cases} 
				1+\log (-iz) & \mbox{if } k=0 \\ 
				(-1)^{k-1}(k-1)! & \mbox{if } k\ge 1, 
			\end{cases}          
		\end{align*}  
		where $P_0(w)=w^{-1}$ and $P_{k}(w)=(w-k)P_{k-1}(w)-wP^{\prime}_{k-1}(w)$ and $Q_k(w)=w^{2}(P_k(w)-P^{\prime}_k(w))$. For $k=0,1,2,3,4$, explicit values of $P_k$ and $Q_k$ are given in \cite{tyler2024asymptotics} (see (3.18)). 
	\end{proposition}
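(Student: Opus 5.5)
The statement is the small-$\Im z$ expansion of $\mu_k$ (Tyler's (3.16)). The plan is to derive it directly from the modular transformation $\eta(z)=(-iz)^{-1/2}\eta(-1/z)$ in \eqref{eq-eta modular}.

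First take logarithms:
\[
\log\eta(z)=-\tfrac12\log(-iz)+\log\eta(-1/z).
\]
For $\log\eta(w)$ with $w=-1/z$ (large imaginary part) use the product formula \eqref{eq-eta}:
\[
\log\eta(w)=\frac{\pi i w}{12}-\sum_{n\ge1}\frac{\sigma(n)}{n}e^{2\pi i n w}.
\]
This comes from expanding $\log(1-q^m)=-\sum_j q^{mj}/j$ and collecting terms with $mj=n$, which gives $\sum_{d\mid n} d/n=\sigma(n)/n$. It converges absolutely and locally uniformly in $\mathbb H$, so termwise differentiation in $z$ is justified. Hence
\[
\log\eta(z)=-\frac{\pi i}{12z}-\frac12\log(-iz)-\sum_{n\ge1}\frac{\sigma(n)}{n}\exp\!\Big(-\frac{2\pi i n}{z}\Big).
\]
For $k=0$ one interprets \eqref{demu(z)} with no derivative, i.e.\ $\mu_0=-\frac{z}{2\pi i}\log\eta(z)$. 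Multiplying through, the constant $1/24$ and the term $\frac{z}{4\pi i}\log(-iz)$ appear directly. The series term is $\frac{z}{2\pi i}\frac{\sigma(n)}{n}e^{-2\pi i n/z}=P_0(w)\sigma(n)e^{-w}$ with $w=2\pi i n/z$ and $P_0(w)=w^{-1}$. This fixes the normalisation.

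Next, apply $\left(\frac{d}{dz}\right)^k$ to each of the three pieces.
\begin{itemize}
\item The term $-\frac{\pi i}{12 z}$ has $k$th derivative $-\frac{\pi i}{12}(-1)^k k!\,z^{-k-1}$. Multiplied by $-\frac{z^{k+1}}{2\pi i}$ it gives $\frac{(-1)^k k!}{24}$.
\item The term $-\frac12\log(-iz)$ has $k$th derivative $-\frac12(-1)^{k-1}(k-1)!\,z^{-k}$ for $k\ge1$, giving $\frac{z}{4\pi i}(-1)^{k-1}(k-1)!$.
\item For the series, prove by induction that
\[
-\frac{z^{k+1}}{2\pi i}\Big(\frac{d}{dz}\Big)^k\Big[-\frac{1}{n}e^{-2\pi i n/z}\Big]=P_k(w)\,e^{-w},\qquad w=\frac{2\pi i n}{z}.
\]
\end{itemize}
The induction works as follows. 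Write $T_k(z)=z^{-k-1}P_k(w)e^{-w}$ (up to the constant $\frac{2\pi i}{n}\cdot\frac{1}{2\pi i}$ bookkeeping) and differentiate using $\frac{dw}{dz}=-w/z$. This gives
\[
\frac{d}{dz}\big[z^{-k-1}P_k(w)e^{-w}\big]=z^{-k-2}\big[-(k+1)P_k-wP_k'+wP_k\big]e^{-w}=z^{-k-2}\big[(w-k-1)P_k(w)-wP_k'(w)\big]e^{-w},
\]
which is exactly $P_{k+1}$. This recursion step is the only real computation.

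For $\mu_k'$, differentiate the formula for $\mu_k$ just obtained:
\[
\frac{d}{dz}\big[P_k(w)e^{-w}\big]=-\frac{w}{z}\big(P_k'-P_k\big)e^{-w}=\frac{w^2(P_k-P_k')}{wz}e^{-w}=\frac{Q_k(w)}{w z}e^{-w}.
\]
With $wz=2\pi i n$ this is $Q_k(w)\frac{\sigma(n)}{2\pi i n}e^{-w}$. The constant $\frac{(-1)^kk!}{24}$ disappears, and $\frac{z}{4\pi i}c_k$ becomes $\frac{c_k}{4\pi i}$; for $k=0$ the derivative of $z\log(-iz)$ gives $1+\log(-iz)$.

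The main obstacle is only careful bookkeeping of signs and of the branch of $\log(-iz)$ and $\sqrt{-iz}$ (principal branch, real on $z=iy$), together with uniform convergence to differentiate termwise. The latter is immediate since $|e^{-2\pi i n/z}|=e^{-2\pi n\Im(-1/z)}$ decays geometrically for $z\in\mathbb H$.
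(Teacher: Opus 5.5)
Your proposal is correct and follows the same route as the paper: apply $\eta(z)=(-iz)^{-1/2}\eta(-1/z)$, expand $\log\eta(-1/z)$ as $-\frac{\pi i}{12z}-\sum_{n\ge1}\frac{\sigma(n)}{n}\exp\left(-\frac{2\pi i n}{z}\right)$, and apply $-\frac{z^{k+1}}{2\pi i}\left(\frac{d}{dz}\right)^{k}$ term by term. You also supply two things the paper's proof leaves implicit: the induction showing the series terms obey the stated recursion for $P_k$, and the derivation of the $\mu_k'$ formula via $Q_k$.
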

	\begin{proof}[\textbf{\boldmath Proof of Proposition \ref{eq-(2.1)}}]
		Since we know that 
		\begin{align}\label{eq-logeta}
			\log \eta(z)=-\sum_{n=1}^{\infty}\frac{\sigma(n)}{n}\exp\left(2\pi inz\right)+\frac{\pi iz}{12}. 
		\end{align}
		Therefore, from (\ref{demu(z)}) and the above equation, we obtain
		\begin{align*}
			\mu_{k}(z)&=-\frac{z^{k+1}}{2\pi i} \left(\frac{d}{dz}\right)^{k} \log\eta(z)\\
			&=\frac{z^{k+1}}{2\pi i}\sum_{n=1}^{\infty}\frac{\sigma(n)}{n} \left(\frac{d}{dz}\right)^{k}\exp(2\pi inz)-\frac{z^{k+1}}{24} \left(\frac{d}{dz}\right)^{k} z\\
			&=\sum_{n=1}^{\infty}z^{k+1}(2\pi in)^{k-1}\sigma(n)\exp(2\pi inz)- \begin{cases} 
				\frac{z^{2}}{24} & \mbox{if } k=0,1 \\ 
				0 & \mbox{if } k\ge 2. 
			\end{cases}
		\end{align*}
		This completes the proof.
	\end{proof}
	
	\begin{proof}[\textbf{\boldmath Proof of Proposition \ref{eq-(2.12)}}]
		Applying the functional equation of $\eta$ from (\ref{eq-eta modular}), we obtain
		\begin{align*}
			\log\eta(z)&=\log\eta\left(-\frac{1}{z}\right)-\frac{1}{2}\log(-iz).
		\end{align*}
		Using (\ref{eq-logeta}) for $\log(-1/z)$, we deduce
		\begin{align*}
			\log\eta(z)&=-\sum_{n=1}^{\infty}\frac{\sigma(n)}{n}\exp\left(-\frac{2\pi in}{z}\right)-\frac{\pi i}{12z}-\frac{1}{2}\log (-iz).
		\end{align*}
		Combining (\ref{demu(z)}) with the above equation, we obtain
		\begin{align*}
			\mu_{k}(z)&=\frac{z^{k+1}}{2\pi i}\sum_{n=1}^{\infty}\frac{\sigma(n)}{n}\left(\frac{d}{dz}\right)^{k} \exp\left(-\frac{2\pi in}{z}\right)+\frac{z^{k+1}}{24}\left(\frac{d}{dz}\right)^{k} \frac{1}{z}+\frac{z^{k+1}}{4\pi i} \left(\frac{d}{dz}\right)^{k} \log(-iz) \\
			&=\sum_{n=1}^{\infty}P_{k}\left(\frac{2\pi in}{z}\right)\sigma(n)\exp\left(-\frac{2\pi in}{z}\right)+\frac{(-1)^{k}k!}{24}+\frac{z}{4\pi i}\begin{cases} 
				\log(-iz) & \text{if } k=0 \\ 
				(-1)^{k-1}(k-1)! & \mbox{if } k\ge 1. 
			\end{cases}
		\end{align*}
	\end{proof}
		
		Next, we prove the following three lemmas, which are crucial in the proof of Theorem \ref{theorem-main}.
		\begin{lemma}\label{lemma-mu2}
			Let $y$ satisfy $0<y\le \frac{1}{10}$ and $t>3$.\\
			(i) For $(t+1)y\ge1$, we have 
			\begin{equation*}
				\frac{(t+1)}{16}<(t+1)\mu_{2}(iy)-\mu_{2}((t+1)iy)<\frac{(t+1)}{12}.
			\end{equation*}   
			(ii) For $(t+1)y<1$, we have
			\begin{equation*}
				\frac{t-1}{12}<(t+1)\mu_{2}(iy)-\mu_{2}((t+1)iy)<\frac{t+1}{12}.   
			\end{equation*}
		\end{lemma}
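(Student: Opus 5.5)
The plan is to evaluate $\mu_2$ on the imaginary axis explicitly, using Proposition~\ref{eq-(2.12)} when the imaginary part is small and Proposition~\ref{eq-(2.1)} when it is large. Every non-constant contribution turns out to be an explicitly signed, exponentially small series, and the two cases then reduce to elementary numerical checks.

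\textbf{Step 1: small imaginary part.} From the recursion in Proposition~\ref{eq-(2.12)}, $P_1(w)=(w-1)w^{-1}+w\cdot w^{-2}=1$, hence $P_2(w)=w-2$. Put $k=2$ and $z=iY$ with $0<Y<1$. Then $w=2\pi i n/z=2\pi n/Y$ is real and positive, and $\exp(-2\pi i n/z)=e^{-2\pi n/Y}$. The constant terms are $\frac{2}{24}$ and $\frac{z}{4\pi i}\cdot(-1)=-\frac{Y}{4\pi}$. Therefore
\begin{equation*}
\mu_2(iY)=\frac{1}{12}-\frac{Y}{4\pi}+E(Y),\qquad E(Y)=\sum_{n\ge1}\left(\frac{2\pi n}{Y}-2\right)\sigma(n)e^{-2\pi n/Y}>0 .
\end{equation*}
Each summand of $E$ is positive and increasing in $Y$ on $(0,1]$, so $E$ is increasing there. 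Evaluating at $Y=1$ gives
\begin{equation*}
E(Y)\le E(1)\approx(2\pi-2)e^{-2\pi}+3(4\pi-2)e^{-4\pi}+\cdots<0.01 .
\end{equation*}
For $Y\le \frac{1}{10}$ we get the much stronger bound $E(Y)\le 70e^{-20\pi}$, which is negligible compared with $Y/(4\pi)$.

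\textbf{Step 2: large imaginary part.} For $Y\ge1$, Proposition~\ref{eq-(2.1)} with $k=2$ and $z=iY$ gives $z^3(2\pi i n)=2\pi nY^3$. Hence
\begin{equation*}
\mu_2(iY)=2\pi Y^3\sum_{n\ge1}n\sigma(n)e^{-2\pi nY}>0 .
\end{equation*}
I will bound this series by its value at $Y=1$, which is about $2\pi e^{-2\pi}(1+O(e^{-2\pi}))<0.013$. This requires checking that $Y^3 e^{-2\pi Y}$ is decreasing for $Y\ge1$, which holds since $3/Y<2\pi$.

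\textbf{Case (i), $(t+1)y\ge1$.} Apply Step~1 to $\mu_2(iy)$, using $y\le\frac{1}{10}$, and Step~2 to $\mu_2((t+1)iy)$. This gives
\begin{equation*}
(t+1)\mu_2(iy)-\mu_2((t+1)iy)=(t+1)\left(\frac{1}{12}-\frac{y}{4\pi}+E(y)\right)-\mu_2((t+1)iy).
\end{equation*}
For the upper bound, $E(y)<\frac{y}{4\pi}$ and $\mu_2((t+1)iy)>0$, so the expression is less than $\frac{t+1}{12}$. For the lower bound, use $y\le\frac{1}{10}$ to get at least $(t+1)\left(\frac{1}{12}-\frac{1}{40\pi}\right)-0.013$. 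Since $\frac{1}{12}-\frac{1}{16}=\frac{1}{48}$ and $\frac{1}{48}-\frac{1}{40\pi}>0.0128$, it suffices that $0.0128(t+1)>0.013$. This holds for $t+1\ge5$, which is where the hypothesis $t>3$ enters.

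\textbf{Case (ii), $(t+1)y<1$.} Both $y$ and $(t+1)y$ lie in $(0,1)$, so Step~1 applies to both terms. The linear terms cancel exactly: $(t+1)\frac{y}{4\pi}-\frac{(t+1)y}{4\pi}=0$. This leaves
\begin{equation*}
(t+1)\mu_2(iy)-\mu_2((t+1)iy)=\frac{t}{12}+(t+1)E(y)-E((t+1)y).
\end{equation*}
Because $E$ is increasing and $y\le\frac{1}{10}$, we have $0<(t+1)E(y)\le(t+1)\cdot70e^{-20\pi}$, which is far below $\frac{1}{12}$ over the whole relevant range $t+1<1/y$. Also $0<E((t+1)y)\le E(1)<0.01<\frac{1}{12}$. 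Together these give the bounds $\frac{t-1}{12}$ and $\frac{t+1}{12}$.

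The only genuinely delicate point is making the tail estimates rigorous: the monotonicity of $E$ and of $Y^3e^{-2\pi Y}$, and explicit numerical bounds for $\sum n\sigma(n)e^{-2\pi n}$ and $\sum n\sigma(n)e^{-2\pi n/Y}$ at the endpoint $Y=1$. I would handle these with $\sigma(n)\le n^2$ and a geometric-series comparison. Everything else is exact algebra.
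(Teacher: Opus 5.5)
Your lower bounds are correct and follow the paper's proof essentially step for step. Both use the expansions from Propositions~\ref{eq-(2.12)} and~\ref{eq-(2.1)}, positivity of the tails, monotonicity in $Y$, and a numerical bound at the endpoint $Y=1$. The paper uses the cruder constant $0.04$ where you use $0.013$ and $0.01$.

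The gap is in the upper bounds. The paper gets them in one line from Tyler's result that $y\mapsto\mu_2(iy)$ maps $(0,\infty)$ into $(0,\frac{1}{12})$. You instead try to derive them from constant bounds on $E$, and those bounds are not uniform in the parameters, because nothing bounds $y$ below or $t$ above.
\begin{itemize}
\item In Case (i), the estimate $E(y)\le 70e^{-20\pi}$ does not imply $E(y)<\frac{y}{4\pi}$ once $y\le 280\pi e^{-20\pi}$. So ``negligible compared with $Y/(4\pi)$'' is unjustified for small $y$.
\item In Case (ii), the bound $(t+1)E(y)\le (t+1)\cdot 70e^{-20\pi}$ exceeds $\frac{1}{12}$ once $t+1\ge e^{20\pi}/840$. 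This is permitted whenever $y<1/(t+1)$. The constraint $t+1<1/y$ that you mention is exactly what is needed, but your estimate never uses it.
\end{itemize}

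The repair is to compare $E(Y)$ with $Y$ rather than with a constant. Put $u=2\pi n/Y$. Then the $n$-th term of $E(Y)/Y$ equals $\frac{\sigma(n)}{2\pi n}(u^2-2u)e^{-u}$. This is decreasing in $u$ for $u>2+\sqrt{2}$, and $u\ge 2\pi$ when $Y\le 1$, so $E(Y)/Y$ is increasing on $(0,1]$. Hence $E(Y)\le E(1)\,Y<0.01\,Y<\frac{Y}{4\pi}$, which gives $\mu_2(iY)<\frac{1}{12}$ for $0<Y\le 1$.

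Your Steps 1 and 2 already give $\mu_2(iY)>0$ for all $Y>0$. Together these yield $(t+1)\mu_2(iy)-\mu_2((t+1)iy)<\frac{t+1}{12}$ in both cases at once, which is precisely the paper's argument. In Case (ii) you could equivalently write $(t+1)E(y)<E(y)/y\le E(1)<0.01$.
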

		\begin{proof}  By $(ii)$ of Lemma~3.2 in \cite{tyler2024asymptotics}, the function $y\mapsto \mu_{2}(iy)$ is a strictly decreasing bijection from $(0,\infty)$ onto $\left(0,\tfrac{1}{12}\right)$. Therefore,
			\begin{equation}\label{eq-mu1}
				(t+1)\mu_{2}(iy)-\mu_{2}((t+1)iy)<\frac{t+1}{12}.  
			\end{equation}
			$(i)$ 
			Note,
			\begin{align*}
				\mu_{2}(iy)&=\sum_{n=1}^{\infty}\left(\frac{2\pi n}{y}-2\right)\sigma(n) \exp\left(-\frac{2\pi n}{y}\right)+\frac{1}{12}-\frac{y}{4\pi} \quad(\text{follows from Proposition \ref{eq-(2.12)})}\\
				&>\frac{1}{12}-\frac{y}{4\pi}\ge\frac{1}{12}-\frac{1}{40\pi}\quad\left(\text{as $y\le \frac{1}{10}$}\right).  
			\end{align*}
			As $(t+1)y \ge 1$, by Proposition \eqref{eq-(2.1)}, we obtain
			\begin{align}\label{eq-Numexplanation}
				\notag
				\mu_2((t+1)iy)&=\sum_{n=1}^{\infty}((t+1)y)^{3}2\pi n\sigma(n)\exp(-2\pi n(t+1)y)\\&< \sum_{n=1}^{\infty}2\pi n^{3}\exp(-2\pi n)\le 0.04, 
			\end{align}
			where we have used the estimate $\sigma(n) < n^2$. The numerical bound is established by observing that the function $a(x) = x^3 \exp(-2\pi x)$ is positive and strictly decreasing for $x \ge 1$. Thus, the series may be bounded by $a(1)+\int_{0}^{\infty}a(u)du<0.04,$ where the integral is evaluated using the Gamma function.
			\newline
			Combining the above bounds, we obtain
			\begin{equation*}
				(t+1)\mu_{2}(iy)-\mu_{2}((t+1)iy) > (t+1)\left(\frac{1}{12}-\frac{1}{40\pi}-\frac{0.04}{t+1}\right)\ge \frac{t+1}{16} \quad(\text{ as $t\ge 4$}). 
			\end{equation*}
			Finally, from (\ref{eq-mu1}), we conclude that
			\begin{equation*}
				\frac{(t+1)}{16}<(t+1)\mu_{2}(iy)-\mu_{2}((t+1)iy)<\frac{(t+1)}{12}.  
			\end{equation*}  
			$(ii)$ Using Proposition \ref{eq-(2.12)}, we obtain the following bounds for $(t+1)y < 1$:
			\begin{align*}
				&\mu_{2}((t+1)iy)=\sum_{n=1}^{\infty}\left(\frac{2\pi n}{(t+1)y}-2\right)\sigma(n) \exp\left(-\frac{2\pi n}{(t+1)y}\right)+\frac{1}{12}-\frac{(t+1)y}{4\pi}\quad\text{and}\\
				&\mu_{2}(iy)=\sum_{n=1}^{\infty}\left(\frac{2\pi n}{y}-2\right)\sigma(n) \exp\left(-\frac{2\pi n}{y}\right)+\frac{1}{12}-\frac{y}{4\pi}.
			\end{align*}
			Therefore, we have
			\begin{align}\label{eq-mu2}
				(t+1)\mu_{2}(iy)-\mu_{2}((t+1)y)>\frac{t}{12}-K_{1},
			\end{align}
			where 
			\begin{align*}
				&K_1=\sum_{n=1}^{\infty}\left(\frac{2\pi n}{(t+1)y}-2\right)\sigma(n) \exp\left(-\frac{2\pi n}{(t+1)y}\right).
			\end{align*}
			Since the function $K_{1}$ attains its maximum at $(t+1)y = 1$, we obtain
			\begin{align*}
				K_1\le\sum_{n=1}^{\infty}(2\pi n-2)n^{2}\exp(-2\pi n)<0.04\quad(\text{similar to (\ref{eq-Numexplanation})}).
			\end{align*}
			Hence, from (\ref{eq-mu2}) and (\ref{eq-mu1}), we have
			\begin{align*}
				\frac{t-1}{12}<(t+1)\mu_{2}(iy)-\mu_{2}((t+1)y)< \frac{t+1}{12}.
			\end{align*}
		\end{proof}
		\begin{lemma}
			Suppose that $0<y\le \frac{1}{10}$ and $t>3$. Then we have 
			\begin{equation*}\label{lemma-mu3}
				\left|\frac{(t+1)\mu_{3}(iy)-\mu_{3}((t+1)iy)}{(t+1)\mu_{2}(iy)-\mu_{2}((t+1)iy)}\right|<6.  
			\end{equation*}
		\end{lemma}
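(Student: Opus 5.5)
The denominator is already controlled by Lemma~\ref{lemma-mu2}: it exceeds $\frac{t+1}{16}$ when $(t+1)y\ge 1$ and exceeds $\frac{t-1}{12}$ when $(t+1)y<1$. In both cases it is at least a constant multiple of $t+1$, namely at least $\frac{t+1}{20}$ since $t\ge4$. So it suffices to show
\[
\left|(t+1)\mu_{3}(iy)-\mu_{3}((t+1)iy)\right| < \frac{6(t+1)}{20}
\]
in the first case, and a bound of size $6\cdot\frac{t-1}{12}=\frac{t-1}{2}$ in the second. I will bound $\mu_3(iy)$ and $\mu_3((t+1)iy)$ separately, choosing between Propositions~\ref{eq-(2.1)} and~\ref{eq-(2.12)} according to the size of the imaginary part, exactly as in the proof of Lemma~\ref{lemma-mu2}.

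For $\mu_3(iy)$ with $y\le\frac1{10}$, I use Proposition~\ref{eq-(2.12)} with $k=3$. The constant parts are
\[
\frac{(-1)^3 3!}{24}+\frac{iy}{4\pi i}\cdot 2!=-\frac14+\frac{y}{2\pi},
\]
and the remaining series is $\sum_n P_3\!\left(\frac{2\pi n}{y}\right)\sigma(n)e^{-2\pi n/y}$. Here $P_3$ is an explicit cubic, obtained from the recursion as $P_3(w)=w^2-6w+6$ after the appropriate sign conventions, and $w=2\pi n/y\ge 20\pi n$. Using $\sigma(n)<n^2$, this series is astronomically small. Hence $|\mu_3(iy)|\le \frac14+\frac{1}{20\pi}+\text{tiny}<0.27$, and therefore $(t+1)|\mu_3(iy)|<0.27(t+1)$.

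For $\mu_3((t+1)iy)$ there are two cases. If $(t+1)y\ge1$, Proposition~\ref{eq-(2.1)} gives a pure series (there is no constant term for $k\ge2$), which is bounded by $\sum_n Y^4(2\pi n)^2 n^2 e^{-2\pi nY}$ with $Y=(t+1)y\ge1$. This is maximized at $Y=1$ (or at worst at a point near $Y=2/\pi$; I will check monotonicity for $Y\ge1$), giving a numerical bound well below $0.05$, estimated by the same first-term-plus-Gamma-integral device used in \eqref{eq-Numexplanation}. Then the numerator is at most $0.27(t+1)+0.05$, and dividing by $\frac{t+1}{16}$ gives at most $16(0.27+0.01)\approx4.5<6$. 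If $(t+1)y<1$, Proposition~\ref{eq-(2.12)} gives
\[
\mu_3((t+1)iy)=-\frac14+\frac{(t+1)y}{2\pi}+K_2,
\]
where $K_2$ is the exponential series with $w\ge2\pi$. The $-\frac14$ terms partially cancel, and the numerator becomes
\[
-\frac t4+\frac{(t+1)y}{2\pi}-\frac{(t+1)y}{2\pi}+\text{small}-K_2 .
\]
The $y$-terms cancel exactly, so $|\text{numerator}|\le \frac t4+|K_2|+\text{tiny}$. Bounding $K_2$ at its worst point $(t+1)y=1$ by $\sum_n |P_3(2\pi n)|n^2e^{-2\pi n}$, numerically about $0.1$, and dividing by $\frac{t-1}{12}$ gives at most $\frac{12(t/4+0.1)}{t-1}\le\frac{12(1.1)}{3}=4.4<6$ for $t\ge4$.

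The main obstacle is the numerical control of the exponential series $K_2$. Its worst case $(t+1)y\to1^-$ makes $w=2\pi$ only moderate, and I have to confirm that $|P_3(w)|e^{-w}$ together with the $\sigma(n)$-weighted tail is indeed decreasing in $w$ on $[2\pi,\infty)$, so that evaluating at $(t+1)y=1$ is legitimate. I will verify this by noting that $(w^2+6w+6)e^{-w}$ is decreasing for $w\ge 2$, and then summing the first term exactly and bounding the tail by the Gamma-function integral.
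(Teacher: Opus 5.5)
Your argument is correct, but it takes a different route from the paper. The paper never expands $\mu_3$. It quotes Lemma~3.3(iii) of \cite{tyler2024asymptotics}: $y\mapsto\mu_3(iy)$ is an increasing bijection of $(0,\infty)$ onto $(-\frac14,0)$, so $|\mu_3(iY)|<\frac14$ for every $Y>0$. The triangle inequality then gives $|(t+1)\mu_3(iy)-\mu_3((t+1)iy)|<\frac{t+2}{4}$ in both regimes, and dividing by the two lower bounds of Lemma~\ref{lemma-mu2} yields $\frac{4(t+2)}{t+1}<6$ and $\frac{3(t+2)}{t-1}\le 6$. That is a two-line proof. It is borderline at $t=4$, where the second ratio equals $6$ and only the strictness of the earlier inequalities gives the strict conclusion.

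Your version needs only Propositions~\ref{eq-(2.1)} and~\ref{eq-(2.12)} plus numerical series bounds, so it avoids Tyler's monotonicity lemma. For $(t+1)y<1$ it also uses the exact cancellation of the constants $-\frac14$ and of the terms $\frac{y}{2\pi}$. The numerator is then $-\frac t4+O(1)$, and you get a genuine margin below $6$.

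The cost is that you must actually do the numerics, and two of your values are off, though harmlessly:
\begin{itemize}
\item For $Y=(t+1)y\ge1$, the first term of your series is $4\pi^2e^{-2\pi}\approx0.074$. The bound is therefore about $0.08$, or about $0.17$ with the $a(1)+\int_0^\infty a$ device, not ``well below $0.05$''.
\item The crude estimate $|P_3(w)|\le w^2+6w+6$ at $w=2\pi$ gives $|K_2|\le 0.16$, not $0.1$. The true value at $(t+1)y=1$ is below $0.02$. Also, $P_3(w)e^{-w}$ is itself decreasing for $w>6$, so evaluating at $w=2\pi$ is legitimate.
\end{itemize}
With the corrected constants you still get $16\bigl(0.27+\frac{0.17}{5}\bigr)<4.9$ and $4+4(0.16)<4.7$, so your conclusion stands.
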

		\begin{proof}
			From part $(iii)$ of Lemma 3.3 in \cite{tyler2024asymptotics}, the function $y\mapsto \mu_{3}(iy)$ is a monotonically increasing bijection from $(0,\infty)$ onto $\left(-\tfrac{1}{4},0\right)$. Therefore,
			\begin{equation}\label{eq-mu31}
				|(t+1)\mu_{3}(iy)-\mu_{3}((t+1)iy)|\le|(t+1)\mu_{3}(iy)|+|\mu_{3}((t+1)iy)|<\frac{t+2}{4}.    
			\end{equation}  
			Suppose that $(t+1)y \ge 1$. Then, using the above equation together with
			$(i)$ of Lemma~\ref{lemma-mu2}, we obtain
			\begin{equation*}
				\left|\frac{(t+1)\mu_{3}(iy)-\mu_{3}((t+1)iy)}{(t+1)\mu_{2}(iy)-\mu_{2}((t+1)iy)}\right|<\frac{t+2}{4}\frac{16}{t+1}<6\quad (\text{as $t>3$}) .  
			\end{equation*}
			When $(t+1)y < 1$, we combine (\ref{eq-mu31}) with $(ii)$ of Lemma~\ref{lemma-mu2} to obtain
			\begin{equation*}
				\left|\frac{(t+1)\mu_{3}(iy)-\mu_{3}((t+1)iy)}{(t+1)\mu_{2}(iy)-\mu_{2}((t+1)iy)}\right|<\frac{t+2}{4}\frac{12}{t-1}\le6.   
			\end{equation*}
			This completes the proof.
		\end{proof}
		\begin{lemma}\label{lemma-mu4}
			Assume that $z=x+iy$, where $0<y\le \frac{1}{10}$ and $|x|<\frac{y}{3}$. Then, for every $t>3$,
			\begin{equation*}
				\left|\frac{(t+1)\mu_{4}(z)-\mu_{4}((t+1)z)}{(t+1)\mu_{2}(iy)-\mu_{2}((t+1)iy)}\right|<36.   
			\end{equation*}         
		\end{lemma}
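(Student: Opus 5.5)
The plan is to follow the two preceding lemmas: bound the numerator by a constant multiple of $t+1$, then divide by the lower bounds of Lemma~\ref{lemma-mu2}. The new feature is that $\mu_4$ is evaluated at the complex point $z=x+iy$ with $|x|<y/3$, not on the imaginary axis. So we cannot quote a monotonicity result from \cite{tyler2024asymptotics} directly. Instead we bound $|\mu_4(w)|$ for $w$ in the cone $\{|\operatorname{Re} w|<\operatorname{Im} w/3\}$, using the explicit expansions of Propositions \ref{eq-(2.1)} and \ref{eq-(2.12)}. The cone is invariant under scaling, so $(t+1)z$ lies in it as well.

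For $w=u+iv$ in the cone with $v<1$ (small imaginary part), Proposition \ref{eq-(2.12)} gives
\[
\mu_4(w)=\sum_{n\ge1}P_4\!\left(\tfrac{2\pi i n}{w}\right)\sigma(n)\exp\!\left(-\tfrac{2\pi i n}{w}\right)+1-\tfrac{6w}{4\pi i}.
\]
The key observation is that $\operatorname{Re}(-2\pi i n/w)=-2\pi n v/|w|^2$. Since $|w|^2\le \frac{10}{9}v^2$, this is at most $-\frac{9\pi n}{5v}$. Moreover $|2\pi i n/w|\le 2\pi n/v$. Hence each term of the series is bounded by $|P_4(\xi)|\,n^2 e^{-9\pi n/(5v)}$ with $|\xi|\le 2\pi n/v$, where $P_4$ is the explicit degree-$4$ polynomial from \cite[(3.18)]{tyler2024asymptotics}. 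As in \eqref{eq-Numexplanation}, the series is maximized at the endpoint $v=1$ and there is a tiny numerical constant. The remaining terms have size at most $1+\frac{6}{4\pi}|w|$. This gives $|\mu_4(w)|\le 1+c$ for $v\le 1/10$ with a small $c$, and $|\mu_4(w)|\le C_1$ (about $2$ or $3$) for $v\le 1$.

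For $v\ge1$ (large imaginary part), Proposition \ref{eq-(2.1)} gives
\[
|\mu_4(w)|\le\sum_{n\ge1}|w|^5(2\pi n)^3 n^2 e^{-2\pi n v},
\]
with $|w|\le\sqrt{10/9}\,v$. The function $v^5e^{-2\pi n v}$ is decreasing for $v\ge1$, so the sum is bounded by its value at $v=1$. This is a small numerical constant, computed as in \eqref{eq-Numexplanation} via a first term plus a Gamma integral.

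Combining these bounds, $|(t+1)\mu_4(z)-\mu_4((t+1)z)|\le (t+1)C_0+C_1$, where $C_0$ is the bound at $\operatorname{Im}z=y\le 1/10$ (roughly $1.05$). We then divide using Lemma~\ref{lemma-mu2}:
\begin{itemize}
\item in case (i), $(t+1)y\ge1$, the ratio is at most $16\bigl(C_0+C_1/(t+1)\bigr)$;
\item in case (ii), the ratio is at most $12\bigl((t+1)C_0+C_1\bigr)/(t-1)$, with $t\ge4$ so that $(t+1)/(t-1)\le 5/3$.
\end{itemize}
Both bounds must come out below $36$, so the constants need $C_0\lesssim 1.9$ and $C_1$ modest. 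The main obstacle is producing sharp enough numerical bounds for $P_4$ on the cone, uniformly up to $v=1$. If the crude estimate $|P_4(\xi)|\le \sum|\text{coeff}|\,|\xi|^k$ is too lossy near $v=1$, I would first split the small-$v$ range at $v=1/10$: there the exponential $e^{-18\pi n}$ makes everything negligible. The intermediate range $1/10<v<1$ would then be handled with the actual coefficients of $P_4$ and the decay $e^{-9\pi n/(5v)}$.
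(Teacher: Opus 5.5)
Your overall route is the paper's. You split on $(t+1)y\ge 1$ versus $(t+1)y<1$, use Proposition~\ref{eq-(2.1)} or Proposition~\ref{eq-(2.12)} according to the size of the imaginary part, control the cone via $|w|^2\le\frac{10}{9}v^2$ and $\operatorname{Re}(-2\pi i n/w)\le -\frac{9\pi n}{5v}$, and divide by Lemma~\ref{lemma-mu2}. Case (i) is fine; it only occurs for $t\ge 9$, since $y\le\frac{1}{10}$.

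\textbf{The gap: case (ii) does not close at $t=4$ as you have set it up.} The crude coefficient bound you describe gives, at $v=1$, a series of about $3.6$; the $n=1$ term alone is $972\,e^{-9\pi/5}\approx 3.4$. Adding $1+\frac{3}{2\pi}\sqrt{10/9}\approx 1.5$ for the remaining terms gives $C_1\approx 5.1$, not $2$--$3$. With $C_0\approx 1.05$, your case (ii) bound at $t=4$ is $4(5C_0+C_1)\approx 41.5>36$. It only works for $t\ge 5$.

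\textbf{The missing idea} is what the paper does in this case: subtract the two modular expansions \emph{before} taking absolute values. The non-series parts then combine exactly,
\[
(t+1)\Bigl(1-\frac{3z}{2\pi i}\Bigr)-\Bigl(1-\frac{3(t+1)z}{2\pi i}\Bigr)=t,
\]
so the numerator is at most $t+|E_1|+|E_2|$. Bounding $|(t+1)\mu_4(z)|$ and $|\mu_4((t+1)z)|$ separately costs you an extra $2+\frac{3}{\pi}(t+1)|z|$, up to about $3$. That is precisely what breaks $t=4$.

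Any of the following repairs it.
\begin{itemize}
\item[(a)] Use the cancellation. The paper's majorant for $|E_2|$, evaluated at $(t+1)y=1$, is about $3.62$. The ratio at $t=4$ is then about $4(4+0.025+3.62)\approx 30.6$.
\item[(b)] Observe that $t=4$ and $y\le\frac1{10}$ force $(t+1)y\le\frac12$. There the series in $\mu_4(5z)$ is about $0.05$, so $C_1\approx 1.3$ and the ratio is about $26$.
\item[(c)] Carry out your fallback: evaluate $P_4$ on the cone instead of using its coefficient majorant. On the cone, $\xi=2\pi i n/w$ stays within angle $\arctan\frac13$ of the positive real axis, where $|P_4|$ is far smaller than the majorant; for example $|P_4(2\pi)|\approx 23.5$ versus $972$. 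The $n=1$ term is then at most about $0.15$ at $v=1$.
\end{itemize}
Note also that $P_4(w)=w^3-12w^2+36w-24$ is cubic, not of degree $4$.

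Finally, the paper's own write-up is loose at this same spot. It uses $|E_2|<5.5$ and then $1.5t+6\le 2t$, which needs $t\ge 12$. With $5.5$ the bound at $t=4$ is about $38$; it is the sharper value $\approx 3.62$ that rescues the paper's structure.
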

		\begin{proof}
			We divide our analysis into two cases: $(t+1)y\ge 1$ or $(t+1)y<1.$
			\newline
			We begin by considering the situation when $(t+1)y\ge 1.$
			From the explicit form of $\mu_{4}$ given in Proposition \ref{eq-(2.1)}, we observe that for a large imaginary part, 
			\begin{equation}\label{eq-mu41}
				\mu_{4}((t+1)z) =\sum_{n=1}^{\infty}((t+1)z)^{5}(2\pi in)^{3}\sigma(n)\exp(2\pi in(t+1)z).
			\end{equation}
			For a small imaginary part, Proposition \ref{eq-(2.12)} gives
			\begin{equation}\label{eq-mu42}
				\mu_{4}(z)=\sum_{n=1}^{\infty}\left(\left(\frac{2\pi in}{z}\right)^{3}-12\left(\frac{2\pi in}{z}\right)^{2}+36\left(\frac{2\pi in}{z}\right)-24\right)\sigma(n)\exp\left(-\frac{2\pi in}{z}\right)+1-\frac{3z}{2\pi i}.   
			\end{equation}
			So, from (\ref{eq-mu41}), we deduce
			\begin{equation}\label{eq-mu44}
				\begin{aligned}
					| \mu_{4}((t+1)z)| 
					&\le(t+1)^{5}((x^{2}+y^{2}))^{\frac{5}{2}}\sum_{n=1}^{\infty}(2\pi n)^{3}\sigma(n)\exp(-2\pi n(t+1)y)\\
					&\le \left(\frac{10}{9}\right)^{\frac{5}{2}}((t+1)y)^{5}(2\pi)^{3}\sum_{n=1}^{\infty}n^{5}\exp(-2\pi n(t+1)y) \quad\left(\text{as $|x|<\frac{y}{3}$}\right)\\
					&\le \left(\frac{10}{9}\right)^{\frac{5}{2}}(2\pi)^{3}\sum_{n=1}^{\infty}n^{5}\exp(-2\pi n)<2 \quad (\text{similar to (\ref{eq-Numexplanation})}),
				\end{aligned}
			\end{equation}
			since the maximum is attained when $(t+1)y=1$.\newline
			Let $r \in \mathbb{Z}^{+}$. Then, for $|x| < \frac{y}{3}$, we have
			\begin{equation}\label{eq-mu43}
				\begin{aligned}
					& \left|\left(\frac{2\pi in}{rz}\right)^{k}\right| =\frac{(2\pi n)^{k}}{r^{k}(x^{2}+y^{2})^{k/2}}<\frac{(2\pi n)^{k}}{(ry)^{k}} \quad \text{and}\\
					& \left|\exp\left(-\frac{2\pi in}{rz}\right)\right|= \left|\exp\left(-\frac{2\pi in(x-iy)}{r(x^{2}+y^{2})}\right)\right|=\exp\left(-\frac{2\pi ny}{r(x^{2}+y^{2})}\right)\le \exp\left(-\frac{9\pi n}{5ry}\right).
				\end{aligned}
			\end{equation}
			By taking the modulus of both sides of \eqref{eq-mu42} and applying \eqref{eq-mu43}, we obtain
			\begin{align}\label{eq-mu45}
				\notag
				|\mu_{4}(z)|
				&\le\sum_{n=1}^{\infty}\left(\frac{(2\pi n)^{3}}{y^{3}}+12\frac{(2\pi n)^{2}}{y^{2}}+36\frac{(2\pi n)}{y}+24\right)\sigma(n)\exp\left(-\frac{9\pi n}{5y}\right)+1+\frac{3y}{2\pi}\sqrt{\frac{10}{9}}\\
				\notag
				&\le\sum_{n=1}^{\infty}\left((20\pi n)^{3}+12(20\pi n)^{2}+36(20\pi n)+24\right)n^{2}\exp\left(-18\pi n\right)+1+\frac{3}{2\pi}\sqrt{\frac{1}{90}}\quad\left(\text{as $0<y\le \frac{1}{10}$}\right)\\
				&<1.5 \quad (\text{similar to (\ref{eq-Numexplanation})}).
			\end{align}
			So, from (\ref{eq-mu44}) and (\ref{eq-mu45}), it follows that
			\begin{align*}
				|(t+1)\mu_{4}(z)-\mu_{4}((t+1)z)|&\le |(t+1)\mu_{4}(z)|+|\mu_{4}((t+1)z)|\\&<1.5(t+1)+2\le2(t+1)\quad(\text{as $t>3$}).   
			\end{align*}
			By $(i)$ of Lemma \ref{lemma-mu2}, we have
			\begin{equation*}
				\left|\frac{(t+1)\mu_{4}(z)-\mu_{4}((t+1)z)}{(t+1)\mu_{2}(iy)-\mu_{2}((t+1)iy)}\right|<2(t+1)\frac{16}{(t+1)}<36.    
			\end{equation*}
			We now consider the case where $(t+1)y<1.$ Using Proposition \ref{eq-(2.12)}, we deduce
			\begin{align}\label{eq-mu46}
				|(t+1)\mu_{4}(z)-\mu_{4}((t+1)z)|&= |E_1-E_2+t|\le |E_1|+|E_2|+t,
			\end{align}
			where
			\begin{align*}
				&E_{1}=(t+1)\sum_{n=1}^{\infty}\left(\left(\frac{2\pi in}{z}\right)^{3}-12\left(\frac{2\pi in}{z}\right)^{2}+36\left(\frac{2\pi in}{z}\right)-24\right)\sigma(n)\exp\left(-\frac{2\pi in}{z}\right)\,\,\text{and  }\\
				&E_2= \sum_{n=1}^{\infty}\left(\left(\frac{2\pi in}{(t+1)z}\right)^{3}-12\left(\frac{2\pi in}{(t+1)z}\right)^{2}+36\left(\frac{2\pi in}{(t+1)z}\right)-24\right)\sigma(n)\exp\left(-\frac{2\pi in}{(t+1)z}\right).\\
			\end{align*}
			It follows that the sum in \eqref{eq-mu42} is given by $E_1/(t+1)$, where $|E_{1}| < 0.005(t+1)$ is based on a calculation similar to that of \eqref{eq-mu45}. Applying \eqref{eq-mu43}, we obtain
			\begin{align*}
				|E_{2}| &\le\sum_{n=1}^{\infty}\left(\frac{(2\pi n)^{3}}{((t+1)y)^{3}}+12\frac{(2\pi n)^{2}}{((t+1)y)^{2}}+36\frac{(2\pi n)}{((t+1)y)}+24\right)\sigma(n)\exp\left(-\frac{9\pi n}{5(t+1)y}\right).  
			\end{align*}
			Since the maximum occurs at $(t+1)y = 1$ and $\sigma(n)<n^{2}$. Therefore,
			\begin{align*}
				|E_2|&\le\sum_{n=1}^{\infty}\left((2\pi n)^{3}+12(2\pi n)^{2}+36(2\pi n)+24\right)n^{2}\exp\left(-\frac{9\pi n}{5}\right)<5.5.
			\end{align*}
			Hence, from (\ref{eq-mu46}), we deduce
			\begin{align*}
				|(t+1)\mu_{4}(z)-\mu_{4}((t+1)z)|<1.5t+6\le 2t. 
			\end{align*}
			By $(ii)$ of Lemma \ref{lemma-mu2}, we simplify
			\begin{align*}
				\left|\frac{(t+1)\mu_{4}(z)-\mu_{4}((t+1)z)}{(t+1)\mu_{2}(iy)-\mu_{2}((t+1)iy)}\right|<2t\frac{12}{(t-1)}<36.    
			\end{align*}
		\end{proof}
		In the next theorem, we establish a general asymptotic formula for $p(N,t)$. Throughout the theorem, the symbol $\vartheta$ denotes a complex number with $|\vartheta| \le 1$. The value of $\vartheta$ may depend on the parameters and may vary at each occurrence. Since, in the proof of $(i)$ of Theorem \ref{theorem-1.1}, we established that $y$ is a solution of \eqref{eq-(1.3)}, we may choose such a $y$ in the following theorem.
		\begin{theorem}\label{theorem-main}
			Let $L=N+\frac{t}{24}$ and $y$ be chosen such that 
			\begin{equation}\label{eq-assumL}
				\left|\frac{\mu_{1}((t+1)iy)-(t+1)\mu_{1}(iy)}{(t+1)y^{2}}- L  \right|<\frac{2}{25y}   
			\end{equation}
			and $\frac{1}{y}\ge1000$. Then
			\begin{align*}
				p(N,t)&=\frac{\sqrt{t+1}y^{\frac{3}{2}}\exp\left(2\pi y\left(N+\frac{t}{24}\right)\right)g_{t}(iy)}{\sqrt{(t+1)\mu_{2}(iy)-\mu_{2}((t+1)iy)}}\left(1+ \frac{3.5(t+1)y}{(t+1)\mu_{2}(iy)-\mu_{2}((t+1)iy)}\vartheta\right),  
			\end{align*}
			where $|\vartheta|\le 1.$
		\end{theorem}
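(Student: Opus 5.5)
We start from the splitting \eqref{eq-pNt} of the integral \eqref{eq-(1.1)} at $|x|=y/3$. The second piece is handled directly by Proposition~\ref{proposition-error}: since $1/y\ge 1000$, its absolute value is at most $2\exp(2\pi Ly)g_t(iy)\exp(-\pi/(150y))$. Writing $D:=(t+1)\mu_2(iy)-\mu_2((t+1)iy)$, Lemma~\ref{lemma-mu2} gives $D<(t+1)/12$. Hence $\exp(-\pi/(150y))$ is negligible against the claimed error $\frac{\sqrt{t+1}\,y^{3/2}}{\sqrt D}\cdot\frac{(t+1)y}{D}$, because $\sqrt{t+1}/\sqrt D\ge\sqrt{12}$. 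We will keep explicit constants so that this tail uses only a small portion of the budget $3.5$.

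For the main piece $|x|<y/3$, insert the Taylor expansion \eqref{eq-taylor}. The phase of the integrand becomes
\[
2\pi i\Big(-Lx+x\,\tfrac{(t+1)\mu_1(iy)-\mu_1((t+1)iy)}{(t+1)(iy)^2}\Big)+2\pi i\,\tfrac{x^2}{2}\tfrac{D}{(t+1)(iy)^3}+R_3+R_4 .
\]
The linear coefficient equals $2\pi i x\big(\tfrac{\mu_1((t+1)iy)-(t+1)\mu_1(iy)}{(t+1)y^2}-L\big)$, which by \eqref{eq-assumL} is $2\pi i x\cdot\tfrac{2}{25y}\vartheta$, so it is at most $\tfrac{4\pi}{75}$ in size on $|x|<y/3$. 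The quadratic term is $-\pi x^2 D/((t+1)y^3)$, a real negative Gaussian. The cubic remainder $R_3$ is bounded by $\tfrac{2\pi|x|^3}{6(t+1)y^4}\cdot 6D$ via Lemma~\ref{lemma-mu3}. The quartic remainder $R_4$ is bounded by $\tfrac{2\pi x^4}{24(t+1)|z'|^5}\cdot 36D$ via Lemma~\ref{lemma-mu4}, using $|z'|\ge y$. On $|x|<y/3$ both $R_3$ and $R_4$ are at most a constant multiple of $x^2D/((t+1)y^3)$ times $|x|/y$ or $x^2/y^2$, so they are dominated by the Gaussian exponent.

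Next, substitute $x=y^{3/2}\sqrt{(t+1)/D}\,s$, so that the Gaussian becomes $e^{-\pi s^2}$ and the prefactor $y^{3/2}\sqrt{t+1}/\sqrt D$ appears. Under this change of variables the linear term is of size $\lesssim \frac{|s|\,y^{1/2}\sqrt{t+1}}{\sqrt D}$, the cubic term is of size $\lesssim \frac{|s|^3 y^{1/2}\sqrt{t+1}}{\sqrt D}$, and the quartic term is of size $\lesssim \frac{s^4 y(t+1)}{D}$. Set $\delta=\frac{(t+1)y}{D}$. Expand $e^{w}=1+w+O(|w|^2e^{|w|})$ with $|w|$ controlled by a fraction of $\pi s^2$ so that the integral still converges. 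The linear and cubic contributions are odd in $s$ at first order, so they vanish when integrated against $e^{-\pi s^2}$; their squares and the quartic term contribute $O(\delta)$. Extending the range to all of $\mathbb R$ costs at most $e^{-\pi D/(9(t+1)y)}$, which is again $O(\delta)$. Since $\int e^{-\pi s^2}ds=1$, this yields the stated main term together with an $O(\delta)$ error.

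One subtlety is that the linear coefficient from \eqref{eq-assumL} is only bounded, not zero. We therefore treat it like the cubic term: its first-order contribution is odd and vanishes, and its square is of size $s^2\,\frac{y(t+1)}{D}$, which is $O(\delta)$.

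The main obstacle is bookkeeping of explicit constants to land exactly on $3.5\vartheta$. We will bound each second-order remainder with the numerical constants $6$ and $36$ from the lemmas, then evaluate Gaussian moments explicitly ($\int s^4e^{-\pi s^2}=3/(4\pi^2)$, $\int s^6 e^{-\pi s^2}=15/(8\pi^3)$). Throughout we use $D>(t-1)/12$ or $(t+1)/16$ from Lemma~\ref{lemma-mu2}, together with $y\le10^{-3}$, to absorb the exponentially small tails. If the constants come out too large, the first adjustment will be to use the sharper fact that the cubic and linear terms are purely imaginary to first order, so only their squares matter.
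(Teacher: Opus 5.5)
Your outline follows the paper's proof step for step: the same split at $|x|=y/3$, the same use of Proposition~\ref{proposition-error}, the same Taylor expansion with the ratio bounds $6$ and $36$, the same normalisation $s=\sqrt{\alpha}\,u$ with $u=x/y$ and $\alpha=D/((t+1)y)\ge 50$, and the same absorption of $\exp(-\pi/(150y))$ as $0.05/\alpha$. The one exception is the step that produces the explicit constant. The paper does not evaluate the truncated Gaussian integral itself; it quotes Lemma~3.1 of Tyler, which gives $\frac{y}{\sqrt{\alpha}}(1+3.45\,\vartheta/\alpha)$.

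Since the statement is precisely about the constant $3.5$, your replacement for that lemma carries the whole weight, and as sketched it fails. The bound $e^{w}=1+w+O(|w|^{2}e^{|w|})$ with ``$|w|$ controlled by a fraction of $\pi s^{2}$'' is not available. On $|u|<1/3$ the cubic term is only bounded by $2\pi\alpha|u|^{3}\le\frac23\pi s^{2}$ and the quartic term by $3\pi\alpha u^{4}\le\frac13\pi s^{2}$. So the available bounds do not even give $|w|<\pi s^{2}$ once the linear term is added, and $e^{|w|}$ then cancels the Gaussian completely. Even the correct inequality $|e^{w}-1-w|\le\frac12|w|^{2}\max(1,e^{\operatorname{Re}w})$ with $\operatorname{Re}w\le\frac13\pi s^{2}$ is not enough. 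Integrating $\frac12|w|^{2}$ against $e^{-\frac23\pi s^{2}}$ gives, from the cubic part alone, $\frac{15}{4\pi}\left(\frac32\right)^{7/2}\approx 4.9$ in front of $1/\alpha$, which already exceeds $3.5$.

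The fix is the fact you hold in reserve, used exactly rather than ``to first order''. Since $\beta$ and $\xi$ are real, the linear and cubic terms form a real phase $\phi(s)=\frac{2\pi}{\sqrt{\alpha}}(\beta s-\xi s^{3})$. Only the quartic term $r$ has a real part, and $|r|\le 3\pi s^{4}/\alpha\le\frac13\pi s^{2}$. Write the integrand minus $e^{-\pi s^{2}}$ as $e^{-\pi s^{2}}(e^{i\phi}-1)+e^{-\pi s^{2}}e^{i\phi}(e^{r}-1)$ and treat the three pieces as follows:
\begin{itemize}
\item The $\sin\phi$ part integrates to zero by oddness.
\item $|\cos\phi-1|\le\frac12\phi^{2}$, integrated against the full Gaussian, gives $(\pi\beta^{2}+3|\beta|+\frac{15}{4\pi})/\alpha<1.46/\alpha$ for $|\beta|<2/25$.
\item $|e^{r}-1|\le|r|e^{|r|}$ gives at most $\frac{3\pi}{\alpha}\int s^{4}e^{-\frac23\pi s^{2}}ds=\frac{9}{4\pi}\left(\frac32\right)^{5/2}\frac{1}{\alpha}<\frac{1.98}{\alpha}$.
\end{itemize}
Together with the negligible truncation tail (for $\alpha\ge 50$) this recovers $3.45/\alpha$, and your tail estimate supplies the remaining $0.05/\alpha$. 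So either cite Tyler's lemma, as the paper does, or organise the computation in this phase/modulus split. A plain second-order expansion of $e^{w}$ will not reach $3.5$.
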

		\begin{proof}
			Recall (\ref{eq-pNt}), and apply Proposition \ref{proposition-error} to get
			\begin{align}\label{eq-p(N,t)}
				\notag
				p(N,t)&=\exp(2\pi Ly)g_{t}(iy)\int_{-y/3}^{y/3}\exp\left(-2\pi i Lx+2\pi i\frac{1}{2\pi i}\log\frac{g_{t}(z)}{g_{t}(iy)}\right)dx\\
				&+\exp(2\pi Ly)g_{t}(iy)\left(2\vartheta\exp\left(-\frac{\pi}{150y}\right)\right).
			\end{align}
			From the Taylor expansion of $\frac{1}{2\pi i}\log\frac{g_t(z)}{g_t(iy)}$ in (\ref{eq-taylor}), we have
			\begin{align}\label{eq-tayfinal}
				\notag
				\frac{1}{2\pi i}\log \frac{g_{t}(z)}{g_{t}(iy)}&=x\frac{(t+1)\mu_{1}(iy)-\mu_{1}((t+1)iy)}{(t+1)(iy)^{2}}\\
				&+\frac{x^{2}}{2!}\frac{(t+1)\mu_{2}(iy)-\mu_{2}((t+1)iy)}{(t+1)(iy)^{3}}\left(1+2i\xi\frac{x}{y}+3\vartheta \frac{x^{2}}{y^{2}}\right),
			\end{align}
			where $\xi\in (-1,1)$ due to Lemma \ref{lemma-mu3} and $|\vartheta|\le 1$ by Lemma \ref{lemma-mu4}.
			\newline
			Let 
			\begin{equation}\label{eq-alpha}
				\alpha=\frac{(t+1)\mu_{2}(iy)-\mu_{2}((t+1)iy)}{(t+1)y}.
			\end{equation}
			From Lemma \ref{lemma-mu2}, and using the fact that $\frac{1}{y}\ge 1000$, we have 
			\begin{align*}
				\alpha>\text{min}\left(\frac{1000}{16},\frac{1000(t-1)}{12(t+1)}\right)\ge 50>38,
			\end{align*}
			for $t>3$. 
			\newline
			Again, let 
			\begin{equation*}
				\beta=y\left(\frac{(t+1)\mu_{1}(iy)-\mu_{1}((t+1)iy)}{(t+1)(iy)^{2}}-L\right).   
			\end{equation*} 
			By assumption (\ref{eq-assumL}), we have $|\beta|<\frac{2}{25}.$ Substituting $\frac{1}{2\pi i}\log\frac{g_t(z)}{g_t(iy)}$ from \eqref{eq-tayfinal} in the integrand of \eqref{eq-p(N,t)}, we obtain
			\begin{align}\label{eq-exppart}
				\notag
				\exp\left(-2\pi i Lx+2\pi i\frac{1}{2\pi i}\log\frac{g_{t}(z)}{g_{t}(iy)}\right)&=\exp\left(\frac{2\pi ix}{y}y\left(\frac{(t+1)\mu_{1}(iy)-\mu_{1}((t+1)iy)}{(t+1)(iy)^{2}}-L\right)\right)\\
				\notag
				&\times\exp\left(-\pi\frac{x^{2}}{y^{2}}\frac{(t+1)\mu_{2}(iy)-\mu_{2}((t+1)iy)}{(t+1)y}\left(1+2i\xi\frac{x}{y}+3\vartheta \frac{x^{2}}{y^{2}}\right)\right)\\
				&=\exp\left(\frac{2\pi i\beta x}{y}\right)\exp\left(-\pi\alpha\left(\frac{x}{y}\right)^{2}\left(1+2i\xi\frac{x}{y}+3\vartheta \frac{x^{2}}{y^{2}}\right)\right).
			\end{align}
			After making the change of variable $u=\frac{x}{y}$, we deduce
			\begin{align*}
				\int_{-y/3}^{y/3}\exp\left(-2\pi i Lx+2\pi i\frac{1}{2\pi i}\log\frac{g_{t}(z)}{g_{t}(iy)}\right)dx&=y\int_{-1/3}^{1/3}\exp\left(2\pi i\beta u\right)\exp\left(-\pi\alpha u^{2}\left(1+2i\xi u+3\vartheta u^{2}\right)\right)du\\
				&=\frac{y}{\sqrt{\alpha}}\left(1+\frac{3.45}{\alpha}\vartheta\right),
			\end{align*}
			follow from Lemma 3.1 of \cite{tyler2024asymptotics}.
			Substituting in (\ref{eq-p(N,t)}), we obtain 
			\begin{align}\label{eq-alphaerr}
				\notag
				p(N,t)&=\exp(2\pi Ly)g_{t}(iy)\left(\frac{y}{\sqrt{\alpha}}\left(1+\frac{3.45}{\alpha}\vartheta\right)+2\vartheta\exp\left(-\frac{\pi}{150y}\right)\right)\\
				&=\exp(2\pi Ly)g_{t}(iy)\frac{y}{\sqrt{\alpha}}\left(\left(1+\frac{3.45}{\alpha}\vartheta\right)+2\vartheta\frac{\sqrt{\alpha}}{y} \exp\left(-\frac{\pi}{150y}\right)\right).
			\end{align}
			From Lemma \ref{lemma-mu2}, we obtain 
			\begin{align*}
				\alpha =\frac{(t+1)\mu_{2}(iy)-\mu_{2}((t+1)iy)}{(t+1)y}< \frac{(t+1)}{12(t+1)y}=\frac{1}{12y}.
			\end{align*}
			Therefore,
			\begin{align*}
				2\frac{{\alpha^{\frac{3}{2}}}}{y} \exp\left(-\frac{\pi}{150y}\right)&<\frac{2}{(12)^{\frac{3}{2}}y^{\frac{5}{2}}} \exp\left(-\frac{\pi}{150y}\right)\\
				&\le \frac{2(1000)^{\frac{5}{2}}}{(12)^{\frac{3}{2}}} \exp\left(-\frac{1000\pi}{150}\right)<0.05,
			\end{align*}
			as $ \frac{1}{y}\ge 1000$. So, $ 2\frac{\sqrt{\alpha}}{y} \exp\left(-\frac{\pi}{150y}\right)<\frac{0.05}{\alpha}.$ Therefore, from (\ref{eq-alphaerr}), we have
			\begin{align*}
				p(N,t) &=\exp(2\pi Ly)g_{t}(iy)\frac{y}{\sqrt{\alpha}}\left(1+\frac{3.5}{\alpha}\vartheta\right).   
			\end{align*}
			Substituting $\alpha$ from (\ref{eq-alpha}) in the above equation, we obtain our required result.
		\end{proof} 
		
		\section{Proofs of Main Results}\label{sec-proofre}
		In this section, we will prove all the results presented in the introduction. We proceed to establish bounds on $y$ for which an asymptotic expression for $p(N,t)$ holds, using the saddle point method.
		
		\begin{proof}[\textbf{\boldmath Proof of  \hyperlink{thm1.1i}{(i)} of Theorem \ref{theorem-1.1}}]
			According to (\ref{eq-(1.1)}), the function $p(N,t)$ admits the following integral representation:
			\begin{equation*}
				p(N,t) = \int_{-1/2}^{1/2} \exp\left(-2\pi i z \left(N + \frac{t}{24}\right)\right) g_t(z) dx.
			\end{equation*}
			To apply the saddle point method, we define
			\begin{equation*}
				f(z) = -2\pi i z\left(N+\frac{t}{24}\right) +  \log g_t(z). 
			\end{equation*}
			Recall from Section \ref{section1} that the saddle point $y$ is determined by solving the equation $f'(iy) = 0$. Now, it follows from (\ref{eq-gtz}) that
			\begin{equation}\label{eq-(3.2)}
				2\pi i\left(N + \frac{t}{24}\right) = \frac{d}{dz} \log \eta((t+1)z) - \frac{d}{dz} \log \eta(z).
			\end{equation}
			From equation (\ref{demu(z)}), we have
			\begin{align*}
				\frac{d}{dz} \log \eta(z) &= -\frac{2\pi i}{z^{2}} \mu_{1}(z)\quad\text{and}\\
				\frac{d}{dz}\log \eta((t+1)z)&=-\frac{2\pi i}{(t+1)z^{2}}\mu_{1}((t+1)z).
			\end{align*}
			Using the above equations in (\ref{eq-(3.2)}) and then setting $z = iy$, we deduce
			\begin{equation*}
				\frac{\mu_{1}((t+1)iy) - (t+1)\mu_{1}(iy)}{(t+1)y^{2}} = N + \frac{t}{24}.
			\end{equation*}
			We will now demonstrate that the solution $y>0$ is unique.
			\newline
			For large $\Im z$, it follows from Proposition \ref{eq-(2.1)} that
			\begin{equation}\label{eq-(3.3)}
				\mu_{1}(iy) = \frac{y^{2}}{24} - \sum_{n=1}^{\infty} y^{2}\sigma(n)\exp(-2\pi n y).
			\end{equation}
			On the other hand, for small $\Im z$, Proposition \ref{eq-(2.12)} gives
			\begin{equation}\label{eq-(3.4)}
				\mu_{1}(iy) = -\frac{1}{24} + \frac{y}{4\pi} + \sum_{n=1}^{\infty} \sigma(n) \exp\left(-\frac{2\pi n}{y}\right).
			\end{equation}
			From (\ref{eq-(3.3)}), we have
			\begin{equation*}
				\lim_{y \to \infty} \frac{\mu_{1}((t+1)iy) - (t+1)\mu_{1}(iy)}{(t+1)y^{2}} = \frac{t}{24},
			\end{equation*}
			and from (\ref{eq-(3.4)}), we have
			\begin{equation*}
				\lim_{y \to 0^{+}} \frac{\mu_{1}((t+1)iy) - (t+1)\mu_{1}(iy)}{(t+1)y^{2}} = \infty.
			\end{equation*}
			Hence, for any positive integer $N$, there exists some $y > 0$ such that
			\begin{equation*}
				\frac{\mu_{1}((t+1)iy) - (t+1)\mu_{1}(iy)}{(t+1)y^{2}} = N + \frac{t}{24}.
			\end{equation*}
			We know from the explicit expression of $\mu_k$ given in Proposition \ref{eq-(2.1)} and Proposition \ref{eq-(2.12)}, that $\mu_{2}(z) = -2\mu_{1}(z) + z\mu_{1}^{\prime}(z)$\footnote{The proof proceeds by considering the cases $y \ge 1$ and $y < 1$, and follows directly from Proposition \ref{eq-(2.1)} and Proposition \ref{eq-(2.12)}.}.  Applying this with the lower bound of $(t+1)\mu_2(iy)-\mu_2((t+1)iy)$ from Lemma \ref{lemma-mu2}, we deduce
			\begin{align*}
				\frac{d}{dy}\left(\frac{\mu_{1}((t+1)iy) - (t+1)\mu_{1}(iy)}{(t+1)y^{2}}\right) 
				&= \frac{1}{t+1}  \frac{\mu_{2}((t+1)iy) - (t+1)\mu_{2}(iy)}{y^{3}} < 0.
			\end{align*}
			Therefore, there exists a unique value $y > 0$ that satisfies equation (\ref{eq-(1.3)}).
			
			Next, we establish bounds for $y$. It follows from (\ref{eq-(3.3)}) and (\ref{eq-(3.4)}) that
			\begin{equation*}
				\mu_1(iy)< \frac{y^{2}}{24} \quad\text{and}\quad  \mu_{1}(iy)>-\frac{1}{24}.
			\end{equation*}
			Therefore,
			\begin{equation}\label{eq-yupper}
				\frac{\mu_{1}((t+1)iy) - (t+1)\mu_{1}(iy)}{(t+1)y^{2}} < \frac{t+1}{24} + \frac{1}{24y^{2}}.
			\end{equation}
			Given 
			\begin{align}\label{eq-ysolution}
				\frac{\mu_{1}((t+1)iy) - (t+1)\mu_{1}(iy)}{(t+1)y^{2}}=N+\frac{t}{24}.
			\end{align}
			So, it follows from (\ref{eq-yupper}) that
			\begin{align}\label{eq-(3.5)}
				y < \frac{1}{\sqrt{24N - 1}}.
			\end{align}
			Again, applying (\ref{eq-(3.4)}) and assuming that $(t+1)y<1$, we obtain
			\begin{equation*}
				\mu_{1}((t+1)iy) - (t+1)\mu_{1}(iy) = \frac{t}{24}+E_3-E_4 , 
			\end{equation*}
			where $E_3=\sum_{n=1}^{\infty}\sigma(n)\exp\left(-\frac{2\pi n}{(t+1)y}\right)$ and $E_4=\sum_{n=1}^{\infty}(t+1)\sigma(n)\exp\left(-\frac{2\pi n}{y}\right)$. One can easily prove that $E_3 \ge E_4$ for $(t+1)y < 1$. Therefore, 
			\begin{align*}
				\frac{\mu_{1}((t+1)iy) - (t+1)\mu_{1}(iy)}{(t+1)y^{2}} > \frac{t}{24(t+1)y^{2}}.
			\end{align*}
			Similarly, from (\ref{eq-ysolution}) and the above inequality, we obtain
			\begin{equation}\label{eq-(3.6)}
				y>\sqrt{\frac{t}{24(t+1)\left(N + \frac{t}{24}\right)}} .   
			\end{equation} 
			Combining (\ref{eq-(3.5)}) and (\ref{eq-(3.6)}), we deduce
			\begin{align*}
				\sqrt{\frac{t}{24(t+1)\left(N + \frac{t}{24}\right)}} <y < \frac{1}{\sqrt{24N - 1}}.  
			\end{align*}
		\end{proof}
		We next prove the second part of our theorem, which establishes an asymptotic result for $p(N,t).$
		\begin{proof}[\textbf{\boldmath Proof of  \hyperlink{thm1.1ii}{{(ii)}} of Theorem \ref{theorem-1.1}}] 
			Assume $N$ is sufficiently large. From Lemma \ref{lemma-mu2}, we have
			\begin{align}
				\frac{(t+1)y}{(t+1)\mu_{2}(iy)-\mu_{2}((t+1)iy)}\ll y.   
			\end{align} 
			From (\ref{eq-(1.3)}), $y$ is a solution of  $\frac{\mu_{1}((t+1)iy)-(t+1)\mu_{1}(iy)}{(t+1)y^{2}}- L=0$. Thus, the condition of Theorem \ref{theorem-main} is satisfied. Using (\ref{eq-gtz}) in Theorem \ref{theorem-main}, we obtain
			\begin{align*}
				p(N,t)&=\frac{\sqrt{t+1}y^{\frac{3}{2}} \exp\left(2\pi y\left(N+\frac{t}{24}\right)\right)\,\eta((t+1)iy)}{\sqrt{(t+1)\mu_{2}(iy)-\mu_{2}((t+1)iy)}\,\eta(iy)}\left(1+O(y)\right).   
			\end{align*}
		\end{proof}
		In the following, we derive explicit expressions for $p(N,t)$ that correspond to different ranges of $t$.
		\begin{proof}[\textbf{\boldmath Proof of \hyperlink{thm1.2i}{{(i)}} of Theorem \ref{theorem-1.2}}]
			From (\ref{eq-(1.4)}), we have $(t+1)y < 1$ for the given range of $t$. Using Theorem~\ref{theorem-1.1}, together with the bounds for $\eta((t+1)iy)$ and $\eta(iy)$ from Lemma~\ref{lemma-2.2}, we obtain
			\begin{align}\label{eq-pNT}
				\notag
				p(N,t)=&\frac{y^{\frac{3}{2}}\exp\left(2\pi y\left(N+\frac{t}{24}\right)-\frac{\pi}{12(t+1)y}-\rho_{1}\exp\left(-\frac{2\pi}{(t+1)y}\right)\right)}{\sqrt{(t+1)\mu_{2}(iy)-\mu_{2}((t+1)iy)}\,\exp\left(-\frac{\pi}{12y}-\rho_{2}\exp\left(-\frac{2\pi}{y}\right)\right)}\left(1+O(y)\right)\\
				=&\frac{y^{\frac{3}{2}}\exp\left(2\pi y\left(N+\frac{t}{24}\right)+\frac{\pi t}{12(t+1)y}+\rho_{2}\exp\left(-\frac{2\pi}{y}\right)\right)}{\sqrt{(t+1)\mu_{2}(iy)-\mu_{2}((t+1)iy)}\,\exp\left(\rho_1\exp\left(-\frac{2\pi}{(t+1)y}\right)\right)}\left(1+O(y)\right),
			\end{align}
			where $1 < \rho_1, \rho_2 < 1.00873$.
			\newline
			From $(i)$ of Theorem (\ref{theorem-1.1}), we know that $y$ is a solution of 
			\begin{align}\label{eq-soly}
				\frac{\mu_{1}((t+1)iy) - (t+1)\mu_{1}(iy)}{(t+1)y^{2}} = N + \frac{t}{24}. 
			\end{align}
			By Proposition \ref{eq-(2.12)}, we have the following bounds for $\mu_1(z)$ when $(t+1)y<1$:
			\begin{align*}
				\mu_{1}((t+1)iy)&=\sum_{n=1}^{\infty}\sigma(n)\exp\left(-\frac{2\pi n}{(t+1)y}\right) -\frac{1}{24}+\frac{(t+1)y}{4\pi}\quad\text{and} \\
				\mu_{1}(iy)&=\sum_{n=1}^{\infty}\sigma(n)\exp\left(-\frac{2\pi n}{y}\right) -\frac{1}{24}+\frac{y}{4\pi}.   
			\end{align*}
			Substituting the expressions for $\mu_1$ in (\ref{eq-soly}), we deduce the following
			\begin{align}\label{eq-AB}
				y^{2}(t+1)\left(N+\frac{t}{24}\right)=\frac{t}{24}+ B_1-B _2, 
			\end{align}
			where
			\begin{align*}
				B_1=\sum_{n=1}^{\infty}\sigma(n)\exp\left(-\frac{2\pi n}{(t+1)y}\right)\,\,\text{and  } B_2=(t+1) \sum_{n=1}^{\infty}\sigma(n)\exp\left(-\frac{2\pi n}{y}\right).  
			\end{align*}
			After substituting the value of $B_1$\footnote{Note that $B_1=\sum_{n=1}^{\infty}\sigma(n)\exp\left(-\frac{2\pi n}{(t+1)y}\right)=O\left(\left(N+\frac{t}{24}\right)^{-(\frac{1}{2}+\epsilon)}\right)$ for the given range   $2<(t+1)\le\frac{2\pi}{(\frac{1}{2}+\epsilon)\log \left(N+\frac{t}{24}\right)}\sqrt{\frac{24(t+1)\left(N+\frac{t}{24}\right)}{t}}$.} and $B_2$\footnote{We also note that $B_2=(t+1) \sum_{n=1}^{\infty}\sigma(n)\exp\left(-\frac{2\pi n}{y}\right)=O\left(\left(N+\frac{t}{24}\right)^{-2}\right)$.} in (\ref{eq-AB}) for the given range of t, and solving the quadratic equation in $y$, we have
			\begin{align}\label{eq-solutiony}
				\notag
				y&=\sqrt{\frac{t}{24(t+1)\left(N + \frac{t}{24}\right)}}\sqrt{1+\frac{24(B_1-B_2)}{t}}\\
				&= \sqrt{\frac{t}{24(t+1)\left(N + \frac{t}{24}\right)}}+O\left(N^{-\frac{3}{2}}\right).  
			\end{align}
			Similarly,
			\begin{align}\label{eq-yinvese}
				y^{-1}=\sqrt{\frac{24(t+1)\left(N + \frac{t}{24}\right)}{t}}+O\left(N^{-\frac{1}{2}}\right).    
			\end{align}
			For the above values of $y$ and $y^{-1}$, we obtain the following estimates:
			\begin{equation}\label{eq-estimates}
				\begin{aligned}
					&2\pi y\left(N+\frac{t}{24}\right)+\frac{\pi t}{12(t+1)y}=\frac{2\pi}{\sqrt{6}}\left(\sqrt{\frac{t}{t+1}\left(N+\frac{t}{24}\right)}\right)+O(N^{-\frac{1}{2}}),\\
					&\exp\left(\rho_2\exp\left(-\frac{2\pi}{y}\right)\right)=1+O\left(N^{-\frac{1}{2}}\right)\quad \text{and}\\
					&\exp\left(\rho_1\exp\left(-\frac{2\pi}{(t+1)y}\right)\right)=\exp\left(\rho_1\exp\left(-2\pi\sqrt{\frac{(24N+t)}{t(t+1)}}+O\left(N^{-\frac{1}{2}}\right)\right)\right)\left(1+O\left(N^{-\frac{1}{2}}\right)\right).
				\end{aligned}
			\end{equation}
			In the following claim, we obtain an explicit bound for $(t+1)\mu_{2}(iy)-\mu_{2}((t+1)iy)$, since our objective is to prove an asymptotic bound for $p(N,t)$ in the given range of $t$.
			\begin{claim}\label{claim-1}
				For all $\epsilon>0$, if $(t+1)<\frac{2\pi}{(\frac{1}{2}+\epsilon)\log \left(N+\frac{t}{24}\right)}\sqrt{\frac{24(t+1)\left(N+\frac{t}{24}\right)}{t}}$, then
				\begin{equation*}
					(t+1)\mu_{2}(iy)-\mu_{2}((t+1)iy)=\frac{t}{12}\left(1+O\left(N^{-\frac{1}{2}}\right)\right).
				\end{equation*}
			\end{claim}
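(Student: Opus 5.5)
We will expand both $\mu_2$ terms with Proposition \ref{eq-(2.12)}, which is legitimate because in this range of $t$ we already know $(t+1)y<1$; this is the same regime as Lemma \ref{lemma-mu2}(ii). For $k=2$ the small-imaginary-part expansion reads
\begin{align*}
\mu_2(iy)=\frac{1}{12}-\frac{y}{4\pi}+\sum_{n=1}^{\infty}\left(\frac{2\pi n}{y}-2\right)\sigma(n)\exp\left(-\frac{2\pi n}{y}\right),
\end{align*}
and likewise with $y$ replaced by $(t+1)y$. In the difference $(t+1)\mu_2(iy)-\mu_2((t+1)iy)$ the linear terms $-\frac{(t+1)y}{4\pi}$ cancel exactly, and the constants give $\frac{t+1}{12}-\frac{1}{12}=\frac{t}{12}$. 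This yields the exact identity
\begin{align*}
(t+1)\mu_2(iy)-\mu_2((t+1)iy)=\frac{t}{12}+(t+1)S(y)-S((t+1)y),
\end{align*}
where $S(w)=\sum_{n\ge1}\left(\frac{2\pi n}{w}-2\right)\sigma(n)e^{-2\pi n/w}$.

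It then suffices to show that $(t+1)|S(y)|+|S((t+1)y)|=O\left(tN^{-1/2}\right)$.

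For the first term we use \eqref{eq-(1.4)}, which gives $1/y\gg\sqrt{N}$. Hence $|S(y)|\ll y^{-1}e^{-2\pi/y}\le e^{-c\sqrt N}$, and multiplying by $t+1\le N$ keeps it far smaller than $tN^{-1/2}$.

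The second term is the main obstacle. Here $w=(t+1)y$ can be close to $1$, so only the hypothesis on $t$ gives decay. By \eqref{eq-yinvese},
\begin{align*}
\frac{1}{(t+1)y}=\frac{1}{t+1}\sqrt{\frac{24(t+1)L}{t}}+O\left(\frac{1}{(t+1)\sqrt N}\right),
\end{align*}
so the hypothesis $(t+1)<\frac{2\pi}{(\frac12+\epsilon)\log L}\sqrt{\frac{24(t+1)L}{t}}$ becomes $\frac{2\pi}{(t+1)y}\ge(\frac12+\epsilon)\log L-O(1)$. This is the same computation as in the footnote for $B_1$. The geometric tail in $n$ (using $\sigma(n)<n^2$) is dominated by the $n=1$ term, so
\begin{align*}
|S((t+1)y)|\ll\left(1+\frac{1}{(t+1)y}\right)e^{-2\pi/((t+1)y)}\ll L^{-\frac12-\epsilon}\log L .
\end{align*}
This is $O(N^{-1/2})$, and therefore $O(tN^{-1/2})$ since $t\ge 4$.

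The factor $\log L$ coming from $\frac{2\pi}{(t+1)y}$ is absorbed by the extra $L^{-\epsilon}$, which is exactly why $\epsilon>0$ is needed. When $t+1$ is much smaller than $\sqrt N$, the exponent $\frac{2\pi}{(t+1)y}$ is large and the bound is even better.

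Dividing by $\frac{t}{12}$ gives $(t+1)\mu_2(iy)-\mu_2((t+1)iy)=\frac{t}{12}\left(1+O(N^{-1/2})\right)$, as claimed.
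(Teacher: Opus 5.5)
Your proposal is correct and follows the paper's route. Both arguments expand $\mu_2$ via Proposition~\ref{eq-(2.12)} to get the exact identity $(t+1)\mu_2(iy)-\mu_2((t+1)iy)=\frac{t}{12}+(t+1)E_5-E_6$; your $S(y)$ and $S((t+1)y)$ are the paper's $E_5$ and $E_6$. Both then bound $E_5$ using $1/y\gg\sqrt{N}$, and bound $E_6=O(N^{-1/2})$ using \eqref{eq-yinvese} together with the hypothesis on $t$. You also spell out what the paper leaves implicit: the hypothesis gives $\frac{2\pi}{(t+1)y}\ge(\frac12+\epsilon)\log L-O(1)$, and the extra factor $L^{-\epsilon}$ absorbs the $\log L$ coming from the $\frac{2\pi n}{(t+1)y}$ prefactor.
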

			\begin{proof}[Proof of Claim \ref{claim-1}]
				From (\ref{eq-solutiony}), we see that $(t+1)y<1$ for the chosen range of $t$. Hence, we can apply Proposition \ref{eq-(2.12)}, which gives
				\begin{align*}
					\mu_{2}(iy)&=\sum_{n=1}^{\infty}\left(\frac{2\pi n}{y}-2\right)\sigma(n) \exp\left(-\frac{2\pi n}{y}\right)+\frac{1}{12}-\frac{y}{4\pi} \quad\text{and}  \\
					\mu_{2}((t+1)iy)& = \sum_{n=1}^{\infty} \left( \frac{2\pi n}{(t+1)y} - 2 \right) \sigma(n) \exp\left( -\frac{2\pi n}{(t+1)y} \right) + \frac{1}{12} - \frac{(t+1)y}{4\pi}.
				\end{align*}
				Therefore,
				\begin{equation*}
					(t+1)\mu_{2}(iy) - \mu_{2}((t+1)iy) = \frac{t}{12}+(t+1)E_5-E_6, 
				\end{equation*} 
				where \begin{align*}
					E_5=\sum_{n=1}^{\infty}\left(\frac{2\pi n}{y}-2\right)\sigma(n) \exp\left(-\frac{2\pi n}{y}\right) \quad\text{and}\quad E_6= \sum_{n=1}^{\infty} \left( \frac{2\pi n}{(t+1)y} - 2 \right) \sigma(n) \exp\left( -\frac{2\pi n}{(t+1)y} \right). 
				\end{align*}
				Using $y^{-1}$ from (\ref{eq-yinvese}), we derive bounds for $E_5$ and $E_6$, $E_5=O(N^{-2})$ and $E_6=O\left(N^{-\frac{1}{2}}\right)$, and obtain
				\begin{align*}
					(t+1)\mu_{2}(iy) - \mu_{2}((t+1)iy) = \frac{t}{12}\left(1+O\left(N^{-\frac{1}{2}}\right)\right).   
				\end{align*}
				This proves the claim.
			\end{proof}
			Inserting the value of $y$ and the estimates from (\ref{eq-estimates}) in the formula for $p(N,t)$ in (\ref{eq-pNT}), together with the preceding claim, we deduce
			\begin{align*}
				p(N,t)=&\frac{\sqrt{12}t^{\frac{1}{4}}((t+1)(24N+t))^{-\frac{3}{4}}\exp\left(\frac{2\pi}{\sqrt{6}}\sqrt{\frac{t}{t+1}\left(N+\frac{t}{24}\right)}\right)}{\exp\left(\rho_1\exp\left(-2\pi\sqrt{\frac{(24N+t)}{t(t+1)}}+O\left(N^{-\frac{1}{2}}\right)\right)\right)}\left(1+O\left(N^{-\frac{1}{2}}\right)\right) \\
				=&\frac{\sqrt{12}A_{t}(N)\exp\left(\frac{2\pi}{\sqrt{6}}\sqrt{\frac{t}{t+1}\left(N+\frac{t}{24}\right)}\right)}{\exp\left(\rho_1\exp\left(-2\pi\sqrt{\frac{(24N+t)}{t(t+1)}}+O\left(N^{-\frac{1}{2}}\right)\right)\right)}\left(1+O\left(N^{-\frac{1}{2}}\right)\right).   
			\end{align*} 
			This completes the proof.
		\end{proof}
		\begin{remark}
			If $t$ is fixed, then (\ref{eq-AB}) implies that
			\begin{align*}
				y= \sqrt{\frac{t}{24(t+1)\left(N + \frac{t}{24}\right)}}+O\left(N^{-2}\right).    
			\end{align*}
			Substituting this expression for $y$ in the proof of $(i)$ of Theorem \ref{theorem-1.2}, it follows that
			\begin{align*}
				p(N,t) = \sqrt{12}\,A_{t}(N)
				\exp\left(\frac{2\pi}{\sqrt{6}} \sqrt{\frac{t}{t+1} \left(N + \frac{t}{24}\right)}\right) 
				\left(1 + O\left(N^{-\frac{1}{2}}\right)\right)  
			\end{align*}
			which coincides with Theorem \ref{corollary-1.3}.
		\end{remark}
		\begin{proof}[\textbf{\boldmath Proof of \hyperlink{thm1.2ii}{{(ii)}} of Theorem \ref{theorem-1.2}}]  In the proof of part $(ii)$, we derive an upper bound for $p(N,t)$ instead of an exact bound. This relaxation is due to the difficulty of simplifying $p(N,t)$ with the explicit value of $y$. From (\ref{eq-AB}), we have
			\begin{align*}
				y^{2}(t+1)\left(N+\frac{t}{24}\right)<\frac{t}{24}+\sum_{n=1}^{\infty}\sigma(n)\exp(-2\pi n)\le \frac{t}{24}+\frac{1}{48}. 
			\end{align*}
			This implies 
			\begin{align*}
				y<\sqrt{\frac{2t+1}{24(2t+2)\left(N+\frac{t}{24}\right)}}.  
			\end{align*}
			Thus, applying the lower bound for $(t+1)\mu_2(iy)-\mu_2((t+1)iy)$ from Lemma \ref{lemma-mu2}, the lower bound for $y$ from (\ref{eq-(1.4)}), and the upper bound from the above equation in (\ref{eq-pNT}), we may write 
			\begin{align*}
				p(N,t)\le \frac{\sqrt{12}(2t+1)^{\frac{3}{4}}\exp\left(\frac{2\pi}{\sqrt{6}}\sqrt{\frac{2t+1}{2t+2}\left(N+\frac{t}{24}\right)}\right)}{\sqrt{t-1}\left((24N+t)(2t+2)\right)^{\frac{3}{4}}\exp\left(\exp\left(-2\pi\sqrt{\frac{24N+t}{t(t+1)}}\right)\right)}\left(1+O\left(N^{-\frac{1}{2}}\right)\right).  
			\end{align*}
		\end{proof}
		Finally, we obtain an asymptotic formula for $p(N,t)$ when
		$t+1 > \sqrt{24N}$.
		\begin{proof}[\textbf{\boldmath Proof of  \hyperlink{thm1.2iii}{{(iii)}} of Theorem \ref{theorem-1.2}}]
			From Theorem \ref{theorem-1.1}, we have
			\begin{align*}
				p(N,t)&=\frac{\sqrt{t+1}y^{\frac{3}{2}} \exp\left(2\pi y\left(N+\frac{t}{24}\right)\right)\,\eta((t+1)iy)}{\sqrt{(t+1)\mu_{2}(iy)-\mu_{2}((t+1)iy)}\,\eta(iy)}\left(1+O(y)\right).   
			\end{align*}
			Given $(t+1)y \ge 1$, we plugin bounds for $\eta((t+1)iy)$ and $\eta(iy)$ from Lemma \ref{lemma-2.1} and Lemma \ref{lemma-2.2} respectively, to obtain 
			\begin{align}\label{eq-proofi}
				\notag
				p(N,t) =\ 
				& \frac{\sqrt{t+1} \, y^{2} \exp\left(2\pi y N +\frac{\pi yt}{12}-\frac{\pi yt}{12}-\nu_{1} \exp(-2\pi(t+1)y)\right)}{
					\sqrt{(t+1)\mu_{2}(iy) - \mu_{2}((t+1)iy)} \, \exp\left(-\frac{\pi}{12y} -\nu_{2} \exp\left(-\frac{2\pi}{y}\right)\right)}\left(1+O(y)\right)\\
				\ 
				=& \frac{\sqrt{t+1} \, y^{2} \exp\left(2\pi y N +\frac{\pi}{12y}+\nu_{2} \exp\left(-\frac{2\pi}{y}\right)\right)}{
					\sqrt{(t+1)\mu_{2}(iy) - \mu_{2}((t+1)iy)} \, \exp\left(\nu_{1} \exp(-2\pi(t+1)y) \right)}\left(1+O(y)\right),
			\end{align}
			where $1 < \nu_1, \nu_2 < 1.00873$.
			\newline
			By $(i)$ of Theorem (\ref{theorem-1.1}), we know that $y$ is a solution of 
			\begin{align}\label{eq-soly1}
				\frac{\mu_{1}((t+1)iy) - (t+1)\mu_{1}(iy)}{(t+1)y^{2}} = N + \frac{t}{24}. 
			\end{align}
			To solve $y$ explicitly, consider the following from  Proposition \ref{eq-(2.1)} and Proposition~\ref{eq-(2.12)} respectively:
			\begin{align*}
				\mu_{1}(i(t+1)y)&=-\sum_{n=1}^{\infty}(t+1)^{2}y^{2}\sigma(n)\exp(-2\pi n(t+1)y)+\frac{(t+1)^{2}y^{2}}{24},\quad \text{when  $(t+1)y\ge 1$}\\  
				\text{and}\quad	\mu_{1}(iy)&=\sum_{n=1}^{\infty}\sigma(n)\exp\left(-\frac{2\pi n}{y}\right) -\frac{1}{24}+\frac{y}{4\pi},\quad\text{when $y<1$}.   
			\end{align*} 
			Substituting these bounds in (\ref{eq-soly1}), we have
			\begin{align}\label{eq-soly2}
				y^{2}\left(N-\frac{1}{24}\right)+\frac{y}{4\pi}-\frac{1}{24}+\frac{K_3+K_4}{t+1}=0,
			\end{align}
			where
			\begin{align*}
				K_{3}=\sum_{n=1}^{\infty}(t+1)^{2}y^{2}\sigma(n)\exp(-2\pi n(t+1)y)\,\,\text{and  } K_{4}=(t+1)\sum_{n=1}^{\infty}\sigma(n)\exp\left(-\frac{2\pi n}{y}\right).
			\end{align*}
			Interpreting (\ref{eq-soly2}) as a quadratic equation in $y$, we obtain
			\begin{align*}
				y=-\frac{1}{8\pi\left(N-\frac{1}{24}\right)} +\frac{1}{\sqrt{24\left(N-\frac{1}{24}\right)}}\sqrt{1+\frac{3}{8\pi^{2}\left(N-\frac{1}{24}\right)}-\frac{24(K_3+K_4)}{t+1}}.   
			\end{align*}
			Substituting $y$ in $K_3$ \footnote{Note that  $K_5(N,t)=\frac{K_3}{(t+1)}=\frac{1}{(t+1)}\sum_{n=1}^{\infty}(t+1)^{2}y^{2}\sigma(n)\exp(-2\pi n(t+1)y)\le C_2N^{-1/2}$ for $t+1 > \sqrt{24N}$ and for some constant $C_{2}$ independent of $N$.  
				This estimate is used in the derivation of (\ref{eq-soly3}).} and $K_4$ \footnote{We also note that $K_6=\frac{K_4}{(t+1)}=\sum_{n=1}^{\infty}\sigma(n)\exp\left(-\frac{2\pi n}{y}\right)=O(N^{-2}).$} in the given range of $t$, we simplify
			\begin{equation}\label{eq-soly3}
				y=\frac{1}{\sqrt{24N}}-\frac{K_5(N,t)}{\sqrt{N}}+ \frac{C_{1}}{N}+O\left(\frac{1}{N^{\frac{3}{2}}}\right),  
			\end{equation}
			where $0<K_5(N,t)\le C_2N^{-1/2}$, for some constants $C_1$ and $C_2$ independent of $N$.
			\newline
			Similarly, we obtain an expression for $y^{-1}$ 
			\begin{align*}
				y^{-1}=\sqrt{24N}\left(1+\sqrt{24}K_5(N,t)-\frac{C_1\sqrt{24}}{\sqrt{N}}+O\left(\frac{1}{N}\right)\right).   
			\end{align*}
			The above formulas for $y$ and $y^{-1}$ give
			\begin{equation}\label{eq-yy-inverse}
				\begin{aligned}
					&2\pi Ny+\frac{\pi}{12y}=\frac{2\pi}{\sqrt{6}}\sqrt{N}+O\left(N^{-\frac{1}{2}}\right)\quad \text{and}\\
					&\exp\left(\nu_2\exp\left(-\frac{2\pi}{y}\right)\right)=1+O\left(N^{-\frac{1}{2}}\right). 
				\end{aligned}
			\end{equation}
			\begin{claim}\label{claim-2}
				Let $t + 1 > \sqrt{24N}$. Then
				\begin{equation*}
					(t+1)\mu_{2}(iy)-\mu_{2}((t+1)iy)=\frac{t+1}{12}\left(1+O\left(N^{-\frac{1}{2}}\right)\right).
				\end{equation*}
			\end{claim}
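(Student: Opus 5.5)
We will argue exactly as in Claim~\ref{claim-1}, but now with the two Fourier-type expansions used in opposite regimes. In the present range $t+1>\sqrt{24N}$, the explicit formula \eqref{eq-soly3} gives $y=\frac{1}{\sqrt{24N}}(1+O(N^{-1/2}))$. Hence $y<1$, and also $(t+1)y>1\cdot(1+O(N^{-1/2}))$. This is the regime already used in \eqref{eq-proofi}.

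For $\mu_2(iy)$ we will use Proposition~\ref{eq-(2.12)}:
\begin{equation*}
\mu_{2}(iy)=\frac{1}{12}-\frac{y}{4\pi}+E_5,\qquad E_5=\sum_{n=1}^{\infty}\left(\frac{2\pi n}{y}-2\right)\sigma(n)\exp\left(-\frac{2\pi n}{y}\right).
\end{equation*}
Since $y^{-1}\asymp\sqrt{N}$, we get $E_5=O(\sqrt N e^{-2\pi\sqrt{24N}})=O(N^{-2})$, just as in the proof of Claim~\ref{claim-1}. Therefore
\begin{equation*}
(t+1)\mu_2(iy)=\frac{t+1}{12}\left(1-3y/\pi+O(N^{-2})\right)=\frac{t+1}{12}\left(1+O(N^{-1/2})\right),
\end{equation*}
using $y=O(N^{-1/2})$.

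For $\mu_2((t+1)iy)$ we will use Proposition~\ref{eq-(2.1)}, because $(t+1)y\ge 1$ (up to the harmless $1+O(N^{-1/2})$ factor, which can be absorbed by adjusting constants). As in \eqref{eq-Numexplanation}, this gives
\begin{equation*}
0<\mu_2((t+1)iy)=\sum_{n\ge1}((t+1)y)^3\,2\pi n\,\sigma(n)e^{-2\pi n(t+1)y}<0.04.
\end{equation*}
We only need $\mu_2((t+1)iy)=O(1)$, since dividing by $\frac{t+1}{12}$ leaves $O(1/(t+1))=O(N^{-1/2})$ because $t+1>\sqrt{24N}$. Combining the two estimates then gives
\begin{equation*}
(t+1)\mu_2(iy)-\mu_2((t+1)iy)=\frac{t+1}{12}\left(1+O(N^{-1/2})\right).
\end{equation*}

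The main obstacle is confirming that $(t+1)y\ge1$ really holds, or at least that $(t+1)y$ is bounded below by an absolute positive constant, throughout the range. This is needed so that the $\mu_2((t+1)iy)$ bound stays uniform. The bound $(t+1)y\ge c>0$ with $c$ absolute follows from \eqref{eq-soly3} together with $t+1>\sqrt{24N}$. A uniform $O(1)$ bound for $\mu_2((t+1)iy)$ then also follows directly from the fact, used in Lemma~\ref{lemma-mu2}, that $y\mapsto\mu_2(iy)$ maps $(0,\infty)$ into $(0,1/12)$. This avoids any delicacy near $(t+1)y=1$ and makes the $O(1/(t+1))$ estimate immediate.
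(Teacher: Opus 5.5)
Your proposal is correct and is essentially the paper's own argument: the small-$\Im z$ expansion gives $(t+1)\mu_2(iy)=\frac{t+1}{12}\bigl(1+O(N^{-1/2})\bigr)$, and the bounded term $\mu_2((t+1)iy)=O(1)$ contributes only $O(1/(t+1))=O(N^{-1/2})$ because $t+1>\sqrt{24N}$. Your fallback to the uniform bound $0<\mu_2(iY)<\frac{1}{12}$ for all $Y>0$ is worthwhile: the paper asserts $(t+1)y\ge 1$, but this can fail slightly when $t+1$ exceeds $\sqrt{24N}$ by less than about $1$ (e.g.\ $N=100$, $t=48$), and your bound needs no lower bound on $(t+1)y$ at all.
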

			\medskip
			\begin{proof}[Proof of Claim \ref{claim-2}]
				For given $y$, $\sum_{n=1}^{\infty}\left(\frac{2\pi n}{y}-2\right)\sigma(n) \exp\left(-\frac{2\pi n}{y}\right)=O\left(N^{-\frac{1}{2}}\right)$. Note by 
				Proposition \ref{eq-(2.12)},
				\begin{align*}
					(t+1)\mu_{2}(iy)&=(t+1)\sum_{n=1}^{\infty}\left(\frac{2\pi n}{y}-2\right)\sigma(n) \exp\left(-\frac{2\pi n}{y}\right)+\frac{t+1}{12}-\frac{(t+1)y}{4\pi}\\
					&=\frac{t+1}{12}\left(1+O\left(N^{-\frac{1}{2}}\right)\right).
				\end{align*}
				Since $(t+1)y \ge 1$, by Proposition \ref{eq-(2.1)} we obtain
				\begin{equation*}
					\mu_{2}((t+1)iy) = \sum_{n=1}^{\infty}((t+1)y)^{3}(2\pi n)\sigma(n)\exp(-2\pi n(t+1)y).
				\end{equation*}
				The maximum of $\mu_2((t+1)iy)$ is attained at $(t+1)y = 1$, and one can easily verify that $\mu_{2}((t+1)iy)=O(1).$ Hence,
				\begin{align*}
					(t+1)\mu_{2}(iy)-\mu_{2}((t+1)iy)=&\frac{t+1}{12}\left(1+O\left(\text{max}\left(N^{-\frac{1}{2}}, \frac{1}{t+1}\right)\right)\right)\\
					=&\frac{t+1}{12}\left(1+O\left(N^{-\frac{1}{2}}\right)\right).
				\end{align*}
				This concludes the proof.
			\end{proof}
			Substituting $y = \frac{1}{\sqrt{24N}}\left(1+O\left(N^{-\frac{1}{2}}\right)\right)$ from \eqref{eq-soly3}, together with the estimates from \eqref{eq-yy-inverse} and the bound for $(t+1)\mu_{2}(iy)-\mu_{2}((t+1)iy)$ from the preceding claim in \eqref{eq-proofi}, we simplify the expression for $p(N,t)$ as follows:
			\begin{align*}
				p(N,t)&=\frac{\exp\left(\frac{2\pi}{\sqrt{6}}\sqrt{N}\right)}{4\sqrt{3}N}\exp\left(-\nu_1\exp\left(-\frac{\pi (t+1)}{\sqrt{6N}}\left(1+O\left(N^{-\frac{1}{2}}\right)\right)\right)\right)\left(1+O\left(N^{-\frac{1}{2}}\right)\right)\\
				&=p(N)\exp\left(-\nu_1\exp\left(-\frac{\pi (t+1)}{\sqrt{6N}}\left(1+O\left(N^{-\frac{1}{2}}\right)\right)\right)\right)\left(1+O\left(N^{-\frac{1}{2}}\right)\right),
			\end{align*}
			where the second line follows from \eqref{eq-hardy}.
			\newline
			This completes the proof.
		\end{proof}
		
		\begin{proof}[\textbf{\boldmath Proof of Theorem \ref{theorem-1.4}}]
			By Proposition 2.7 of \cite{barman2025lower} and Theorem \ref{theorem-1.2}, the result follows immediately.
		\end{proof}
		
		\bibliographystyle{abbrv}					
		\bibliography{report1}  				
	\end{document}